\documentclass[12pt]{amsart}
\usepackage[headings]{fullpage}
\usepackage{longtable,array}
\newcolumntype{L}{>{$}l<{$}}
\usepackage{listings}
\usepackage{rotating}
\usepackage{amssymb}
\usepackage{xcolor}
\usepackage{hyperref}
\theoremstyle{plain}
\allowdisplaybreaks
\newtheorem{theorem}{Theorem}[section]

\newtheorem{prop}[theorem]{Proposition}

\theoremstyle{remark}
\newtheorem{remark}[theorem]{Remark}
\newtheorem{exmp}{Example}
\theoremstyle{definition}
\newtheorem{defn}{Definition}[section]
\newcommand{\PP}{\mathbb P}
\newcommand{\lra}{\longrightarrow}
\newcommand\CC{{\mathbb C}}

\newcommand\ZZ{{\mathbb Z}}
\newcommand\QQ{{\mathbb Q}}
\newcommand\mt{{\mathcal T}}

\numberwithin{table}{section}
\begin{document}
\leavevmode
\def\wl#1{\hbox to 10mm{\hfill$\ifcase#1\or I_{2}\or I_{4}\or I_{0}^{*}\or I_{2}^{*}\fi$\hfill}}

\lstset{language=Magma,breaklines=false,breakatwhitespace=true,basicstyle=\tiny, comment=[s]{/*}{*/},commentstyle=\tiny\emph, showstringspaces=false}

\title[Classification of double octic Calabi--Yau threefolds II]
{Classification of double octic Calabi--Yau threefolds defined by an
  arrangement of eight planes II}
        \author{S{\l}awomir Cynk}
\address{Faculty of Mathematics and Computer Science, Jagiellonian University,
	{\L}ojasiewicza 6, 30-348 Krak\'ow, Poland
}
\email{slawomir.cynk@uj.edu.pl}
\author{Beata Kocel--Cynk}
\address{Department of Applied Mathematics\\
	Faculty of Computer Science and Telecommunications\\
	Cracow University of Technology\\
	Warszawska~24\\
	31-155~Krak\'ow\\
	POLAND}
\email{beata.kocel-cynk@pk.edu.pl}
\date{}

\keywords{Calabi--Yau threefold, double octic, automorphism, K3 fibration}
\subjclass[2020]{Primary: 14J32, Secondary: 14D06}

\maketitle
\begin{abstract}
We present a complete classification of all arrangements of eight planes in projective three‑space that give rise to double octic Calabi--Yau threefolds. Building on earlier work, we determine all 455 combinatorial types and describe the projective automorphism groups associated with each arrangement. These automorphisms allow us to identify geometrically distinguished subfamilies and elements, and to construct a rich collection of elliptic and K3 fibrations arising naturally from the singularities of the arrangements.

As applications, we exhibit two one‑parameter families of Calabi--Yau threefolds that each possess three pairwise non‑equivalent maximally unipotent monodromy points---phenomena that are rare in known examples. We further construct two Calabi--Yau threefolds whose third cohomology decomposes, via complex multiplication, into complementary two‑dimensional Hodge structures of types (1,0,1,0) and (0,1,0,1). This decomposition yields new instances of modularity not previously observed in the Calabi--Yau setting.

\end{abstract}

\section*{Introduction}

In this paper we continue the study of double octic Calabi--Yau threefolds
initiated in \cite{CK-C}. A Calabi--Yau threefold is a complex projective
manifold $X$ of dimension $3$ with trivial canonical divisor $K_X = 0$
and vanishing first cohomology of the structure sheaf,
$H^1(\mathcal O_X)=0$.

We study Calabi--Yau threefolds arising as double coverings of the
projective space $\PP^3$ branched along an arrangement of eight planes.
If the arrangement $D$ satisfies mild conditions
(no six planes intersect and no four planes contain a common line),
then the double covering
\[
\pi \colon Y \to \PP^3
\]
branched along $D$ admits a resolution of singularities $X$ which is a
Calabi--Yau threefold, called a \emph{double octic} \cite{CSz}.

Double octics form a relatively small but very suitable class of
Calabi--Yau threefolds for explicit study.
Their invariants (such as the topological Euler characteristic and
Hodge numbers) can be computed easily using computer algebra
\cite{CvS}.
Recently M. Oczko gave a general method to construct a semistable degeneration of a one-parameter family of double octics.
This class is rich enough to provide examples illustrating numerous
interesting phenomena:
\begin{itemize}
	\item examples of one-parameter families of Calabi--Yau threefolds
	without a point of maximal unipotent monodromy, and an example of a
	family with three non-equivalent MUM points \cite{CvS};
	\item a Calabi--Yau threefold in characteristic $3$ that is not
	liftable to characteristic $0$, and a Calabi--Yau threefold in
	characteristic $5$ with obstructed infinitesimal deformations; the
	latter provides a counterexample to the Bogomolov--Tian--Todorov
	theorem in positive characteristic \cite{CvS3};
	\item an example of a Hilbert modular Calabi--Yau threefold with parallel weights [4,4],, a rigid example defined over $\QQ$\cite{CScvS},
	\item  an example of a Hilbert modular Calabi--Yau threefold with weights [4,2] and [2,4] and a real multiplication explaining the decomposition of the Galois representation into Galois-conjugate two-dimensional pieces (and the lack of symmetry between them) \cite{CScvS};
	\item an example of a Calabi--Yau threefold with trivial monodromy and
	no smooth filling, showing that the theorem of Kulikov, Persson, and
	Pinkham does not generalize to dimension $3$ \cite{CvS5};
	\item an example of a Calabi--Yau threefold over a $p$-adic field with unramified $l$-adic cohomology yet having bad reduction which provides a counterexample to the N\'eron--Ogg--Shafarevich criterion in dimension 3 \cite{CO}.
\end{itemize}

C.~Meyer carried out an extensive computer search for double octic
Calabi--Yau threefolds in \cite{Meyer}. His method was based on studying
arrangements of eight planes with small integer coefficients. This
search produced $450$ types of octic arrangements, distinguished by the
types of subarrangements of six planes, which are listed in
\cite[App.~A]{Meyer}.

In \cite{CK-C} we proposed a combinatorial approach to the study of
double octic Calabi--Yau threefolds and used it to classify arrangements
of eight planes defining Calabi--Yau threefolds with Hodge number
$h^{1,2} \le 1$.

In the present paper we apply a similar method to obtain a complete
classification of arrangements of eight planes defining a
Calabi--Yau threefold. We show that there exist $455$ combinatorial
types of octic arrangements. For all types we study equations of octic
arrangements with a given incidence table. For $454$ types there exists
a family parametrized by a quasi-projective rational variety such that
every octic arrangement with a given incidence table is projectively
equivalent to an element of this family. There exists one minimal
incidence table (No.~453) for which there are two projectively
non-equivalent arrangements. These two arrangements are given by Galois
conjugate sequences of eight linear forms with coefficients in the
quadratic field $\QQ(\sqrt{5})$.

In Section~\ref{sec:projtransf} we describe birational transformations of families of
double octic Calabi--Yau threefolds induced by projective
transformations of $\PP^3$ corresponding to permutations of the eight
planes preserving intersections. Using invariant permutations of an
octic arrangement, we define in Section~\ref{sec:dist} geometrically distinguished
subfamilies and distinguished elements of families of double octics
corresponding to jumps in the group of projective self-maps. A quotient
of a Calabi--Yau threefold by a finite group of automorphisms preserving
the canonical form admits a crepant resolution of singularities.

As an application, we construct two new families of Calabi--Yau
threefolds with three non-equivalent points of maximal unipotent
monodromy. We also find two double octic Calabi--Yau threefolds admitting
birational transformations acting on the middle cohomology (of
dimension $4$) as complex multiplication. In particular, this action
splits the associated Galois representation (over a quadratic field)
into a direct sum of two complementary two-dimensional Hodge structure of types (1,0,1,0) and (0,1,0,1), leading to a new instances of modularity .not previously observed in the Calabi--Yau setting.

In Section~\ref{sec:fibr} we describe a large number of fibrations on double octic
Calabi--Yau threefolds. Elliptic and K3 fibrations on a Calabi--Yau manifold carry a lot of information about its arithmetic and geometry.  These fibrations help illuminate the birational geometry of double octics and often lead to identifications between seemingly unrelated families. In particular, explicit descriptions of fibrations play an
important role in most of the listed constructions involving double octics.

In Section~\ref{sec:EqsData} we list equations of 455 families of octic arrangements and Magma \cite{Magma} code computing additional information:
incidence table, singularities, the Euler characteristic, Hodge numbers.

\section{Double octic Calabi--Yau threefolds}
In this section we recall basic information on Calabi--Yau
threefolds constructed as a resolution of singularities of a double
covering of $\PP^{3}$ branched along a union of eight planes.
\begin{defn}\label{def:oa}
An \emph{octic arrangement} is a union of eight planes
 $D=P_{1}\cup\dots\cup P_{8}$ in
$\PP^{3}$ satisfying the following two conditions
\begin{itemize}
  \itemsep=2mm
\item the intersection $P_{i_{1}}\cap\dots\cap P_{i_{6}}$ of any six is empty
  ($1\le i_{1}< i_{2}<\dots< i_{6}\le 8$),
\item the intersection $P_{i_{1}}\cap P_{i_{2}}\cap P_{i_{3}}\cap
  P_{i_{4}}$ of any four does not contain a line i.e. it is the  empty
  set or consists of one
  point ($1\le i_{1}< i_{2}< i_{3}< i_{4}\le 8$).
\end{itemize}
\end{defn}
The surface $D$  is singular at the intersection points of the
components. There are eight types of singularities of octic arrangements
\begin{itemize}
\item [$l_{2}$] --  double line,
\item [$l_{3}$] --  triple line,
\item [$p_{3}$] --  triple point (not on a triple line),
\item [$p^{0}_{4}$] --  fourfold point not lying on a triple line,
\item [$p^{1}_{4}$] --  fourfold point lying on  one triple line,
\item [$p^{0}_{5}$] --  fivefold point not lying on a triple line,
\item [$p^{1}_{5}$] --  fivefold point lying on  one triple line,
\item [$p^{2}_{5}$] --  fivefold point lying on  two triple lines.
\end{itemize}
We shall denote the numbers of multiple lines and points by the same symbols
$l_{2}$, $l_{3},p_{3}$, $p_{4}^{0}$, $p_{4}^{1}$, $p_{5}^{0}$, $p_{5}^{1}$, $p_{5}^{2}$.

\begin{theorem}[\mbox{\cite[Thm.~2.1]{CSz}}]
  For an octic arrangement $D$ there exists a resolution of
singularities $X$ of the double cover of $\PP^{3}$ branched along $D$
that is a smooth Calabi--Yau threefold.
\end{theorem}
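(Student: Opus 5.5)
The approach is to resolve the branch divisor first and only then take the double cover, using the standard criterion for crepant double covers. Let $D\subset\PP^3$ be an octic arrangement. I would construct a sequence of blow-ups $\sigma\colon\widetilde{\PP}\to\PP^3$ with smooth centres, together with divisors $D_k$ on each intermediate blow-up, such that the following hold at every stage.
\begin{itemize}
\item $D_k$ is even, i.e.\ divisible by $2$ in the Picard group.
\item The last divisor $D^*$ is smooth.
\item $K_{\widetilde{\PP}}+\tfrac12 D^* = \sigma^*(K_{\PP^3}+\tfrac12 D)=0$.
\end{itemize}
The rule is as follows. If $C$ is a smooth centre of codimension $c$ lying in $D_k$ with multiplicity $m$, then the new divisor is the strict transform of $D_k$ plus $E$ when $m$ is odd, and the strict transform alone when $m$ is even. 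Write $m=2r$ or $m=2r+1$. Since $K$ increases by $(c-1)E$, the crepancy condition at each step becomes $c-1=r$. This translates into three requirements.
\begin{itemize}
\item A double line ($c=2$, $m=2$) or a triple line ($c=2$, $m=3$) may be blown up.
\item A point ($c=3$) may be blown up if its multiplicity is $4$ or $5$.
\item A triple point is not blown up directly. Instead one blows up one of the double lines through it.
\end{itemize}
Once the branch divisor is smooth, the double cover $X\to\widetilde{\PP}$ branched along $D^*$ is smooth with $K_X=\pi^*(K_{\widetilde{\PP}}+\tfrac12 D^*)=0$. It is birational to the double octic $Y$ and maps onto it, so it is a resolution of $Y$.

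The order of blow-ups is chosen to handle the eight singularity types listed above.
\begin{enumerate}
\item Blow up the fivefold points $p_5^0,p_5^1,p_5^2$. The exceptional $\PP^2$ enters with multiplicity one, and in the strict transform the point becomes a configuration of five lines in $E$ meeting the strict transforms of the planes.
\item Blow up the fourfold points $p_4^0,p_4^1$. Multiplicity $4$ gives $r=2$, $c=3$, so this step is crepant and $E$ is not added.
\item Blow up the triple lines. Multiplicity $3$ gives $r=1$, $c=2$, and $E$ is added. After this, the new divisor has only double lines and triple points, because the hypothesis that no four planes share a line keeps the multiplicity along lines at most $3$.
\item Resolve the double lines by blowing them up one at a time. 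At a triple point, blowing up one of the three double lines through it separates the configuration into normal crossings.
\end{enumerate}
The two conditions in Definition~\ref{def:oa} are exactly what make every point have multiplicity at most $5$ and every line multiplicity at most $3$. With these bounds, all centres met in the process are among the crepant cases listed above.

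I expect the main obstacle to be bookkeeping at the points. After blowing up a fivefold or fourfold point, one must check that the strict transforms together with the exceptional divisor meet only along curves and points of allowed multiplicity. For instance, a $p_5^2$ point creates on $E$ intersection points where two triple lines become a new triple point, and the added exceptional divisor contributes one more component. One must also verify that blowing up a double line that passes through a triple point actually resolves it. Here the double lines are no longer disjoint, and blowing them up in a fixed order must be shown to make subsequent centres smooth; blowing up an intersecting pair in succession suffices, since the second becomes smooth after the first. I would carry out this local analysis case by case for each of the eight types, using local coordinates in which the planes are coordinate-like hyperplanes.

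Finally, I would check $H^1(\mathcal O_X)=0$. The Leray spectral sequence gives $H^1(\mathcal O_X)=H^1(\mathcal O_{\widetilde{\PP}})\oplus H^1(\mathcal O_{\widetilde{\PP}}(-L))$, where $2L=D^*$. The first summand vanishes because $\widetilde{\PP}$ is rational. The second summand is Serre dual to $H^2(K_{\widetilde{\PP}}+L)=H^2(\mathcal O)=0$, since $K_{\widetilde{\PP}}+L=0$ by crepancy. So $X$ is a smooth Calabi--Yau threefold.
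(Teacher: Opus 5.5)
Your proposal is correct and is essentially the construction of \cite{CSz} that the paper recalls. There, points of multiplicity $4$ or $5$ and curves of multiplicity $2$ or $3$ are blown up crepantly, with the exceptional divisor added exactly when the multiplicity is odd, until the branch divisor is smooth, and then the double cover is taken. The differences are harmless. You blow up fourfold points (including $p_4^1$) before triple lines, which is still crepant, whereas the paper blows up triple lines first and so never blows up $p_4^1$ points. Also, your remark about $p_5^2$ is slightly off: after blowing up such a point, the two triple lines meet $E$ in two distinct fourfold points (three planes together with $E$), not in a triple point, but your steps 2--3 handle these anyway.
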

Double cover of $\PP^{3}$ branched along an octic arrangement can be defined in the total space of the line bundle $\mathcal O_{\PP^{3}}(4)$ (cf. \cite{Clemens}), we shall use an alternative embedding as a hypersurface in the weighted projective space $\PP(1,1,1,1,4)$
\[Y:=\{(x,y,z,t,u)\in\PP(1,1,1,1,4): u^{2}=F(x,y,z,t)\}\subset \PP(1,1,1,1,4),\]
where $F$ is the equation of the octic arrangement $D$.

We shall briefly recall the resolution of singularities introduced in \cite{CSz}.
For any octic arrangement $D=P_{1}\cup\dots\cup D_{8}$ there exists a sequence of blow-ups
\[\tilde\PP=\PP_{(k)}\xrightarrow{\ \ \sigma_{k}\ \ }
\PP_{(k-1)}\xrightarrow{\ \ \sigma_{k-1}\ \ }\dots
\xrightarrow{\ \ \sigma_{2}\ \ }\PP_{(1)}\xrightarrow{\ \ \sigma_{1}\ \ }\PP_{(0)}=\PP^{3}
\]
and a sequence of reduced, even divisors $D^{i}\subset \PP_{(i)}$ such that
\begin{itemize}
	\item every intersection of components of $D_{(i)}$ is smooth and irreducible,
	\item the center $C_{i}$ of the blow-up $\sigma_{i}:\PP_{i}\lra \PP_{i-1}$
	is a smooth curve contained in 2 or 3 components of $D^{(i)}$ or a point contained in  4 or 5 components of $D_{i}$,
	\item $\tilde D^{i-1}\le D^{i}\le \sigma^{*}D^{i-1}$,
	\item $D^{0}=D$,
	\item $D^{*}:=D^{k}$ is a non-singular divisor (i.e. irreducible components of $D^{*}$ are pairwise disjoint).
\end{itemize}
Then the double covering $X$ of $\tilde\PP$ branched along $D^{*}$ is a crepant resolution of singularities of $Y$. In particular, $X$ is a Calabi--Yau threefold.

The center of a blow-up $\sigma_{i}$ is chosen among prime components of the singular subscheme of the divisor $D^{i-1}$ (intersections of components of $D^{i-1}$) in the following order: fivefold points, triple lines, fourfold-points and double lines. In particular we never blow-up triple points nor fourfold points that belong to a triple curve ($p_{4}^{1}$ type). This algorithm is described in details in \cite{CSz}. Ingals-Logan \cite{IL} generalized this construction to an arbitrary dimension.

This resolution of singularities is not unique, it depends on the order of double lines intersecting at a triple point. Different resolutions differs by a finite number of flops. In particular they have equal topological invariants: the Euler characteristic and Hodge numbers.

The Euler characteristic of a double octic $X$ is given by a simple formula involving only the arrangement invariants (\cite[Thm.~2.1]{CSz})
\[e(X)=40+4p_{4}^{0}+3p_{4}^{1}+16p_{5}^{0}+18p_{5}^{1}+20p_{5}^{2}+l_{3}.\]
The Hodge numbers of $X$ are more difficult to compute:  the octic  arrangements Nos. 241 and 242 (\cite{Meyer}) have equal arrangement invariants ($p_{3}=16$, $p_{4}^{0}=10$, $p_{4}^{1}=p_{5}^{0}=p_{5}^{1}=p_{5}^{2}=l_{3}=0$) and different Hodge numbers.
The Hodge number $h^{1,2}(X)$ equals the dimension of the Kuranishi space of $X$.

The Kuranishi space (universal deformation) of a double octic $X$ is
given by the space of equisingular deformations (i.e. family of octic
arrangements preserving the types of singularities) modulo trivial
deformations (induced by projective automorphisms of
$\PP^{3}$). The dimension of the Kuranishi space equals the Hodge number
$h^{1,2}(X)$ and can be computed with a computer algebra system via
the equisingular ideal (cf. \cite{CvS}). More precisely, let $f_{i}$
be the equation of the plane
$D_{i}$, $i=1,\dots,8$, and let $F=f_{1}\cdot\ldots\cdot f_{8}$.
The \emph{equisingular ideal} of $D$ is defined by
\[I_{\rm eq}=\bigcap_{C}(I_{C}^{\operatorname{mult}_{C}(D)}+J_{F})
\]
the intersection being taken over all double and triple lines and
multiple points of $D$, where $J_{F}$ is the homogeneous ideal
generated by the partial derivatives  of $F$. Then
\[h^{1,2}(X)=\dim_{\CC}(I_{\rm eq}/J_{F})_{8}.\]

\section{Classification}

\label{sec:cl}
In this section we apply the algorithm introduced in
\cite{CK-C} to perform a complete classification of all octic arrangements satisfying the Calabi--Yau condition (definition~\ref{def:oa}), up to projective transformations.

This algorithm is based on a combinatorial encoding of an octic arrangement in its incidence table.

\begin{defn}
The \emph{incidence table} of an octic arrangement
$D=P_{1}\cup\dots\cup P_{8}$ is  the lexicographically sorted list of all quadruples $1\le
i_{1}<\dots<i_{4}\le8$, such that the planes
$P_{i_{1}},\dots, P_{i_{4}}$ intersect.
\end{defn}
The lexicographically sorted incidence table is independent of the coordinates in
$\PP^{3}$, however it depends on the order of the arrangement
planes. A permutation of arrangement planes may change the incidence
table.
\begin{defn}
The \emph{minimal incidence table} of an octic arrangement is the
minimum of incidence tables over all permutations of its components.
\end{defn}
The minimal incidence table of an octic arrangement is independent of
the coordinate system in $\PP^{3}$ and the order of eight planes.
In \cite{CK-C} we introduced an algorithm to classify minimal
incidence tables of arrangements of eight planes satisfying
definition~\ref{def:oa}. We study an incidence table of an arrangement
$D=P_{1}\cup\dots\cup P_{8}$ together with two auxiliary lists:
\begin{itemize}
	\item $\mathcal T_{3}$: the  list  of triplets
	\(1\le i_{1}<i_{2}<i_{3}\le8\) such that the intersection of planes
	$P_{i_{1}}, P_{i_{2}}, P_{i_{3}}$ contains a line,
	\item  $\mathcal
	T_{5}$: the list of quintuples $1\le
	i_{1}<\dots<i_{5}\le8$, such that the planes
	$P_{i_{1}},\dots, P_{i_{5}}$ intersect.
\end{itemize}
If an arrangement of eight planes satisfies condition
(definition~\ref{def:oa}) then the lists $\mathcal T_{3}$ and $\mathcal T_{5}$
can be recovered from $\mathcal T$ by the following two properties
\begin{itemize}
  \itemsep=2mm
\item [(L1)] a triplet $T$ belongs to $\mathcal T_{3}$ iff
every quadruple containing $T$ belongs to $\mathcal T$,
\item [(L2)] a quintuple $T$ belongs to $\mathcal T_{5}$ iff every quadruple
  contained in $T$ belongs to~$\mathcal T$.
\end{itemize}
Moreover, the lists $\mathcal T_{3}$, $\mathcal T$ and $\mathcal
T_{5}$ satisfy the following properties:
\begin{itemize}
  \itemsep=2mm
\item [(L3)] If $\#(Q_{1}\cap Q_{2})=3$ for $Q_{1},Q_{2}\in\mathcal T$
then  $Q_{1}\cap Q_{2}\in \mathcal T_{3}$ or $Q_{1}\cup
Q_{2}\in\mathcal T_{5}$.
\item [(L4)] If  $Q_{1},Q_{2}\in\mathcal T_{3}$, $Q_{1}\not=Q_{2}$
  then $\#(Q_{1}\cap Q_{2})\le1$,
\item [(L5)] If  $Q_{1},Q_{2}\in\mathcal T_{5}$, $Q_{1}\not=Q_{2}$
  then $\#(Q_{1}\cap Q_{2})\le3$,
\item [(L6)] $\#\mathcal T_{3}\le 4$ and $\#\mathcal T_{5}\le 4$,
\end{itemize}
Finally, if $\mathcal T$ is a minimal incidence table of an octic arrangement then $\mathcal T$ satisfies also
\begin{itemize}
\item [(L7)] $\mathcal T$ is minimal among $\mathcal T^{g}$ for $g\in\Sigma_{8}$,
\end{itemize}
where $\mathcal T^{g}$ is obtained from $\mathcal T$ by an application of a permutation $g\in\Sigma_{8}$ to each entry of every quadruple in $\mathcal T$ (and sorting first every quadruple and then the list of quadruples).
\begin{prop}
	There are 515 triplets $[\mathcal T_{3}, \mathcal T, \mathcal
	T_{5}]$ satisfying $(L1)-(L7)$.
\end{prop}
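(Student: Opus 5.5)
This is a finite enumeration, so I will prove it by an exhaustive computer search organised as orderly generation up to the action of $\Sigma_8$, following the algorithm of \cite{CK-C}.

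Once $\mathcal T$ is fixed, (L1) and (L2) determine $\mathcal T_3$ and $\mathcal T_5$. So it suffices to enumerate subsets $\mathcal T$ of the $\binom84=70$ quadruples for which the derived lists satisfy (L3)--(L6) and $\mathcal T$ satisfies (L7). A naive search over $2^{70}$ subsets is impossible, and this is the main obstacle. I would address it in two ways.

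\textbf{Canonical augmentation.} I build $\mathcal T$ by adding quadruples one at a time in lexicographic order. At each node I keep only those partial lists that are minimal under $\Sigma_8$ (a canonical-form test). A prefix of a minimal list need not itself be minimal, so I will instead use the standard fact that the orbit-minimal representative can be reached through a chain of orbit-minimal partial lists. If that fails, I will use a McKay-style canonical deletion, removing the last quadruple in the canonical order.

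\textbf{Monotone pruning.} I need conditions that, once violated, stay violated as quadruples are added.
\begin{itemize}
\item (L4) and (L5) are monotone. If two triples share two indices and both have all five completing quadruples present, adding more quadruples cannot repair this. The same holds for quintuples.
\item (L6) is monotone, since $\#\mathcal T_3$ and $\#\mathcal T_5$ only grow.
\item (L3) is not monotone, because a missing $\mathcal T_3$ or $\mathcal T_5$ witness may be supplied later. I therefore check it only at the leaves, or once all quadruples containing the relevant triple have been decided.
\end{itemize}

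An alternative organisation, which I expect to be faster, is to choose $\mathcal T_3$ and $\mathcal T_5$ first, as families of at most four sets each with the intersection bounds of (L4) and (L5), up to $\Sigma_8$. These forced configurations are few. Every quadruple containing an element of $\mathcal T_3$, and every quadruple inside an element of $\mathcal T_5$, is then forced into $\mathcal T$. The remaining quadruples may be chosen freely, subject to three constraints:
\begin{itemize}
\item no new triple becomes full;
\item no new quintuple becomes full;
\item by (L3), no two chosen quadruples share three indices unless they are explained by $\mathcal T_3$ or $\mathcal T_5$.
\end{itemize}
That last constraint means the free quadruples form a partial Steiner-type system (pairwise intersections of size at most $2$), which is a small search. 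For each resulting $\mathcal T$ I recompute $\mathcal T_3$ and $\mathcal T_5$ via (L1) and (L2) and check that they equal the chosen ones. I then compute the minimum of $\mathcal T^g$ over all $8!=40320$ permutations, keep the result only if it equals $\mathcal T$, and collect distinct outputs.

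Finally I count the surviving triplets, expecting 515. For a sanity check I would confirm that the list contains the incidence tables of the $450$ Meyer types and of the examples from \cite{CK-C}. As an independent cross-check, I would rerun the enumeration with the two organisations above and verify that they agree.
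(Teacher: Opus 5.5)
Your plan is a legitimate way to carry out the computation, but it is organised differently from the paper's.

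\textbf{How the paper does it.} The paper never searches over subsets of the $70$ quadruples. It adds one quadruple at a time and immediately restores (L3) by branching: step A1 puts $Q_1\cap Q_2$ into $\mathcal T_3$ or $Q_1\cup Q_2$ into $\mathcal T_5$. It then adds all quadruples forced by $\mathcal T_3$ and $\mathcal T_5$, discards (L4)-violations, and \emph{replaces} each node by its $\Sigma_8$-minimal form (step A4), merging duplicates. So every node that survives the closure is already an admissible triplet. Isomorph rejection is done by canonicalisation rather than by a canonicity test, and the recursion stops by itself ($\mathcal L_{15}=\emptyset$).

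\textbf{How your route differs.} Your first organisation is classical orderly generation with (L3) handled lazily. Your second reverses the paper's order of discovery, and it contains an observation you only half state. Call $Q$ free if it contains no triple of $\mathcal T_3$ and lies in no quintuple of $\mathcal T_5$. Then by (L3), $Q$ meets \emph{every} quadruple of $\mathcal T$, forced or free, in at most two indices. Hence the free part is an independent set in the Johnson graph $J(8,4)$. It therefore has at most $14$ elements, with equality only when $\mathcal T$ is the Steiner system $S(3,4,8)$; this is precisely the exceptional table in the proof of Proposition~\ref{prop:it455}. Your route makes the search space transparent and small. The paper's route avoids a separate classification of the pairs $(\mathcal T_3,\mathcal T_5)$.

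\textbf{Details that need repair.}

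(i) Your worry about prefixes is unfounded, and in your favour. For the lexicographic order on sorted lists, if $\mathcal T=[q_1<\dots<q_m]$ is minimal then so is $[q_1,\dots,q_{m-1}]$. Suppose some $g$ makes $\operatorname{sort}(\{q_1,\dots,q_{m-1}\}^g)$ smaller, with first difference at position $i$. Inserting $q_m^g$ either leaves positions $1,\dots,i$ unchanged, or lands at some position $j\le i$ with a value smaller than $q_j$. In both cases $\operatorname{sort}(\mathcal T^g)<\mathcal T$. So plain orderly generation is correct and no McKay fallback is needed.

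(ii) Pruning by (L4)--(L6) alone is hopeless: by Harris' inequality a positive proportion of all $2^{70}$ subsets has no full triple and no full quintuple, so the lazy (L3) test is essential. At a node whose largest quadruple is $q$, every absent quadruple smaller than $q$ is absent for good. You must prune when some pair $Q_1,Q_2$ with $\#(Q_1\cap Q_2)=3$ has such a permanently absent quadruple both among those containing $Q_1\cap Q_2$ \emph{and} among those contained in $Q_1\cup Q_2$. You only mention the triple side. Also, every node, not only the leaves, is a candidate $\mathcal T$ and must receive the full (L3) test.

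(iii) In the second organisation, the filter ``keep the result only if it equals $\mathcal T$'' loses whole orbits. You fixed $(\mathcal T_3,\mathcal T_5)$ only up to $\Sigma_8$. The minimal table of an orbit generally has as its pair a different member of that $\Sigma_8$-orbit than your chosen representative, so it is never generated. Meanwhile every table you do generate in that orbit fails the filter. Replace the filter by canonicalising and deduplicating, as in the paper's step A4.

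(iv) Make sure your (L3) constraint is also applied to pairs of \emph{forced} quadruples, for example one through a triple of $\mathcal T_3$ and one inside a quintuple of $\mathcal T_5$. This is not automatic and must be checked before the packing search.
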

\begin{proof}
We shall recursively construct a list $\mathcal L_{k}$ of triplets $[\mathcal T_{3}, \mathcal
T, \mathcal T_{5}]$ such that the union $\bigcup_{k\ge0}\mathcal L_{k}$ consists of all triplets satisfying $(L1)-(L7)$.

We start with the list
$\mathcal L_{0}$ that contains a single element with empty lists
$\mt_{3}$, $\mt$, $\mt_{5}$. Note that this element corresponds to an octic arrangement equal the union of eight planes in general position.

Assume that we have constructed a list $\mathcal L_{k-1}$, for some $k\ge1$.
Define
\[\mathcal M:=[[\mt_{3},\mt\cup \{Q\},\mt_{5}]:
[\mt_{3},\mt,\mt_{5}]\in\mathcal L_{k-1}, Q\in S_{4}\setminus
\mt],\]
where $S_{k}$ denotes the set of ordered $k$-tuples $1\le
i_{1}<\dots<i_{k}\le 8$ of digits from the set $\{1,\dots,8\}$.

Repeat the following steps on all elements $[\mt_{3},\mt,\mt_{5}]\in
\mathcal M$
\begin{itemize}
	\itemsep=2mm
	\item [A1:] If $\#(Q_{1}\cap Q_{2})=3$ for $Q_{1},Q_{2}\in\mt$ then replace
	the entry $[\mt_{3},\mt,\mt_{5}]$ with the following two entries $[\mt_{3}\cup\{Q_{1}\cap
	Q_{2}\},\mt,\mt_{5}]$ and $[\mt_{3},\mt,\mt_{5}\cup\{Q_{1}\cup
	Q_{2}\}]$.
	\item [A2:] If $\#(Q_{1}\cap Q_{2})=2$ for $Q_{1},Q_{2}\in\mathcal T_{3}$
	then remove the element $[\mt_{3},\mt,\mt_{5}]$ from  $\mathcal M$.
	\item [A2:] for each $Q\in\mt_{3}$ replace $[\mt_{3},\mt,\mt_{5}]$ with
	$[\mt_{3},\mt\cup\{Q'\in S_{4}:Q'\supset Q\},\mt_{5}]$.
	\item [A3:] for each $Q\in\mt_{5}$ replace $[\mt_{3},\mt,\mt_{5}]$ with
	$[\mt_{3},\mt\cup\{Q'\in S_{4}:Q'\subset Q\},\mt_{5}]$.
	\item [A4:] If $\mt^{g}<\mt$ for a permutation $g\in\Sigma_{8}$ then replace
	$[\mt_{3},\mt,\mt_{5}]$ with  $[\mt_{3}^{g},\mt^{g},\mt_{5}^{g}]$.
\end{itemize}
until the list $\mathcal M$ stabilizes. Then define $\mathcal
L_{k}:=\mathcal M$.

Running this algorithm in Magma we verify that
\[\mathcal L_{15}=\emptyset \quad\text{ and }\quad
\#\left( \bigcup_{k=0}^{14}\mathcal L_{k}\right)=515.\]
\end{proof}

\begin{prop}\label{prop:it455}
	There are 455 minimal incidence tables of arrangements of
	eight planes satisfying (definition~\ref{def:oa})
\end{prop}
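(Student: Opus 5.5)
We start from the 515 combinatorial candidates $[\mathcal T_{3},\mathcal T,\mathcal T_{5}]$ of the previous Proposition. Every minimal incidence table of an octic arrangement occurs among them, since (L1)--(L7) are necessary conditions. The task is therefore to decide, for each candidate, whether it is \emph{realizable}: whether there are eight planes in $\PP^{3}$, satisfying definition~\ref{def:oa}, whose incidence table is exactly $\mathcal T$ and whose triple lines and fivefold points are exactly $\mathcal T_{3}$ and $\mathcal T_{5}$. We then show that exactly 455 candidates survive, 60 being excluded.

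For each candidate we set up the realization problem algebraically. We write $P_{i}=\{f_{i}=0\}$ with $f_{i}=a_{i1}x+a_{i2}y+a_{i3}z+a_{i4}t$. Using $PGL_{4}$ we normalize: if five planes in general position are available we take $f_{1},\dots,f_{4}=x,y,z,t$ and $f_{5}=x+y+z+t$. Otherwise we choose a normalization adapted to a triple line or a fivefold point, for instance placing the planes through a triple line as $x$, $y$, $x+y$. The incidence conditions then become polynomial equations in the remaining coefficients:
\begin{itemize}
\item each quadruple $Q\in\mathcal T$ gives $\det(a_{ij})_{i\in Q}=0$;
\item each triplet in $\mathcal T_{3}$ gives the vanishing of all $3\times3$ minors of the corresponding $3\times4$ matrix.
\end{itemize}
The non-incidences give open conditions:
\begin{itemize}
\item $\det\ne0$ for quadruples not in $\mathcal T$;
\item rank $3$ for triplets not in $\mathcal T_{3}$;
\item rank $4$ for every six planes;
\item rank $\ge3$ for every four planes, so that no four planes contain a line.
\end{itemize}
In Magma we compute the primary decomposition of the ideal generated by the closed conditions and saturate by the product of the open conditions. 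A candidate is realizable iff the resulting scheme is nonempty over $\CC$. This is a nonemptiness question over an algebraically closed field, so it suffices that some component is not contained in the open-condition locus complement. Since the conditions are defined over $\QQ$, we may also exhibit explicit points over number fields. For the realizable ones we record an explicit point, or a rational parametrization of the component, which can be checked independently by recomputing its incidence table and confirming that it is minimal, i.e.\ equals the candidate after minimizing over $\Sigma_{8}$.

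The main obstacle is the non-realizable candidates: we must certify that the saturated ideal is the unit ideal. Typical reasons are configuration theorems, analogues of Pappus or Desargues, forcing an extra incidence, or forcing four planes through a line, or six planes through a point. Here I would first reduce the number of variables by elimination, using the determinantal conditions to solve linearly for coefficients of later planes, since each quadruple condition is linear in the coefficients of any one plane. I would then compute a Gröbner basis of the saturation and check that it contains $1$. If a candidate resists, a fallback is to restrict to a subarrangement of six or seven planes where the contradiction already lives and treat that smaller problem by hand.

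A secondary point is that distinct candidates are already distinct as minimal tables by (L7). Also, a candidate's $\mathcal T_{3}$ and $\mathcal T_{5}$ are determined by $\mathcal T$ through (L1) and (L2). So no two surviving candidates give the same minimal incidence table, and the final count of realizable ones is exactly the number of minimal incidence tables. The expected outcome is 455.
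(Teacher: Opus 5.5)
Your plan is the paper's proof. It too reduces each of the 515 candidates to nonemptiness of $V(I)\setminus V(J)$ (the minors indexed by $\mathcal T$ vanish, all other $4\times4$ minors do not) after sending a quintuple containing no element of $\mathcal T$ to $x,y,z,t,x+y+z+t$, and a Magma computation finds exactly 455 nonempty cases. Your additional $\mathcal T_{3}$ and rank conditions follow from the minor conditions once (L1)--(L6) hold, so they are harmless but redundant.

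The one place your recipe needs adjusting is the non-generic case. Exactly one candidate has no free quintuple: the Steiner-system table $[1234,1256,1278,1357,1368,1458,1467,2358,2367,2457,2468,3456,3478,5678]$. Since no two of its quadruples share three indices, it has $\mathcal T_{3}=\mathcal T_{5}=\emptyset$, so there is no triple line or fivefold point to normalize on. The paper instead places $P_{1},\dots,P_{6}$ (whose only incidences are $1234$, $1256$, $3456$) on the faces of a cube, and the resulting incidence equations for $P_{7},P_{8}$ have only the trivial solution. In fact the four linear conditions from $1357,1467,2367,2457$ alone already force $F_{7}=0$ outside characteristic $2$. This is exactly your fallback of locating the contradiction in a seven-plane subarrangement.
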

\begin{proof}
	An element of the incidence table of an octic arrangement $D=P_{1}\cup\dots\cup P_{8}$ corresponds bijectively to a maximal minor of the $8\times4$ matrix of coefficients of equations of planes $P_{1},\dots,P_{8}$. Hence, arrangements with a given incidence table are parametrized by a quasi-projective subset of $(\mathbb P^{3})^{8}$. A list of quadruples from $S_{4}$ is an incidence table when the quasi-projective variety, assigned to that list in an obvious way, is non-empty.
	However, this approach involves polynomials in 32 variables and ignores the action of the automorphisms group.

	To remedy these issues, we fix a quintuple $Q=\{i_{1},\dots,i_{5}\}\subset\{1,\dots,8\}$ such that $Q_{1}\not\subset
	Q$ for any quadruple $Q_{1}\in\mathcal T$.
	If $\mathcal T$ is an incidence table of an octic arrangement $D$, then this condition implies that the planes $P_{i_{1}},\dots,P_{i_{5}}$ are in general position.
	Denote by $1\le j_{1}<j_{2}<j_{3}\le 8$ the
	elements of the set $\{1,\dots,8\}\setminus\{i_{1},\dots,i_{5}\}$, and by $F_{i}$ the equation of $P_{i}$ (linear in coordinates $x,y,z,t$ on $\PP^{3}$).

	Up to an automorphism of $\PP^{3}$  we can assume that
	\begin{eqnarray*}
		&&F_{i_{1}}=x,\;\; F_{i_{2}}=y,\;\; F_{i_{3}}=z,\;\; F_{i_{4}}=t,\;\;
		F_{i_{5}}=x+y+z+t,\\
		&&F_{j_{k}}=A_{k}x+B_{k}y+C_{k}z+P_{k}t,\;\; k=1,2,3.
	\end{eqnarray*}

	Let $M$ be the $8\times4$ matrix of coefficients of the linear forms
	$F_{1},\dots,F_{8}$, denote by $I$ the ideal generated by degree four
	minors of $M$ corresponding to quadruples from $\mathcal T$ and by $J$
	the principal ideal defined by the product of remaining non-zero minors. A list
	of quadruples $\mathcal T$ is an incidence table iff the
	quasi-projective set $V_{\mathcal T}:=V(I)\setminus V(J)\subset \PP^{3}\times\PP^{3}\times \PP^{3} $ is non-empty.
	In that case octic arrangements with the minimal incidence table $\mathcal T$ up to  projective equivalence correspond bijectively to points of the quasi-projective set  $V_{\mathcal T}$.

	A quintuple $Q$ such that $Q$ does not contain any element of $\mathcal T$ exists for any $\mathcal T\in\mathcal L$ with the following unique exception
	\[
	[1234, 1256, 1278, 1357, 1368, 1458, 1467, 2358, 2367, 2457, 2468, 3456, 3478, 5678]
	\]
	We shall prove that $\mathcal T$ is not an incidence table of any octic arrangement, assume to the contrary, that $\mathcal T $ is an incidence table of an octic arrangement $D=P_{1}\cup\dots\cup P_{8}$.
	Observe, that the only incidences among planes $P_{1},\dots,P_{6}$ are
	$1234$, $1256$ and $3456$, so the arrangements of six planes $P_{1}\cup\dots\cup P_{6}$ is projectively equivalent to the union of six planes containing faces of a cube.

	Consequently, we can change  coordinates in $\PP^{3}$ to get
	\[F_{1}=x,\;\; F_{2}=x-t,\;\; F_{3}=y,\;\; F_{4}=y-t,\;\; F_{5}=z,\;\;
	F_{6}=t,\]
	where $F_{i}$ is an equation of the plane $P_{i}$. If we write the
	remaining equations as
	\[F_{7}=A_{1}x+B_{1}y+C_{1}z+P_{1}t,\;\;
	F_{8}=A_{2}x+B_{2}y+C_{2}z+P_{2}t\]
	then minors corresponding to incidences equal
	\begin{eqnarray*}
	 0, 0, -B_{1}C_{2} + C_{1}B_{2}, P_{1}, C_{2} + P_{2}, B_{2} + P_{2}, B_{1} + C_{1} + P_{1}, A_{2} + P_{2}, A_{1} + C_{1} + P_{1}, \\
	 A_{1} + B_{1} + P_{1}, A_{2} + B_{2} + C_{2} + P_{2}, 0, A_{1}C_{2} - C_{1}A_{2}, -A_{1}B_{2} + B_{1}A_{2}.
	\end{eqnarray*}
	We get a system of equations with only trivial solution
	\[A_{1}=B_{1}=C_{1}=P_{1}=A_{2}=B_{2}=C_{2}=P_{2}=0.\]
The ideal $I$ generated by these polynomials
equal
\[\langle2A_{1},A_{1}+B_{1},A_{1}+C_{1},P_{1},2A_{2},A_{2}+B_{2},A_{2}+C_{2},A_{2}+P_{2}\rangle = \langle0\rangle\]
and consequently for this $\mathcal T$ we get $V_{\mathcal T}=\emptyset$.

Using MAGMA we compute the quasi-projective set $V_{\mathcal T}$ and verify that for 455 elements of $\mathcal L$ this set is non-empty and $\mathcal T$ is an incidence table of an octic arrangement.

\end{proof}
For 451 minimal incidence tables the saturated ideal $\operatorname{sat}(I,J)$
is radical and hence the set $V(I)\setminus V(J)$ is
irreducible. Manipulating equations we verified that in fact
$V(I)\setminus V(J)$ is rational and found a rational
parametrization. In the remaining four cases the set $V(I)\setminus
V(J)$ decomposes over a quadratic field into two Galois conjugate
components. These four incidence tables were denoted by E1, E2, E3 and E4
in \cite{CK-C}, here we shall denote them by 451, 452, 453 and 454
respectively.

\begin{prop}[\mbox{\cite[Thm.~7.3]{CK-C}}]
	Octic arrangement No. 451 is unique up-to a projective
	transformation, it is given by an arrangement of eight planes with
	coefficients in $\QQ[\sqrt{-3}]$ and is isomorphic over $\QQ[i]$ to
	its Galois conjugate.

	Octic arrangement No. 452 is unique up-to a projective
	transformation, it is given by an arrangement of eight planes with
	coefficients in $\QQ[\sqrt{-3}]$ and is isomorphic over $\QQ$ to
	its Galois conjugate.

	There are two non-isomorphic octic arrangements No. 453, they are
	given by two Galois conjugate arrangement of eight planes with
	coefficients in $\QQ[\sqrt{5}]$.

	There exists a family  over $\PP^{1}\setminus\{0, \infty, 1,
	\frac12(\sqrt{-3}+1)\}$ that contains all octic arrangements No. 454
	up to isomorphism defined over $\QQ[\sqrt{-3}]$. Every element of this
	family is isomorphic to an element of the Galois conjugate family.
\end{prop}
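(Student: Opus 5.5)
The four exceptional tables are handled one at a time with the normalization from the proof of Proposition~\ref{prop:it455}. Fix a quintuple $Q=\{i_1,\dots,i_5\}$ containing no quadruple of $\mathcal T$. Send the five corresponding planes to $x,y,z,t,x+y+z+t$. The remaining three planes then have coefficients $(A_k,B_k,C_k,P_k)$, $k=1,2,3$. This uses up the whole $PGL_4$ action: a projective transformation fixing five planes in general position is the identity. Hence points of $V_{\mathcal T}=V(I)\setminus V(J)$ correspond bijectively to arrangements with the ordered incidence table $\mathcal T$, up to projective equivalence. The plan is to compute the primary decomposition of $\operatorname{sat}(I,J)$ over $\QQ$ and then over the relevant quadratic field. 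Each table should decompose into two Galois-conjugate components. For Nos.~451, 452 and 453 each component is a single reduced point, defined over $\QQ(\sqrt{-3})$, $\QQ(\sqrt{-3})$ and $\QQ(\sqrt5)$ respectively. For No.~454 each component is a curve, which we parametrize rationally by a coordinate $\lambda$; the excluded values $0,\infty,1,\tfrac12(\sqrt{-3}+1)$ are where $J$ vanishes, i.e.\ where extra incidences appear.

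These components parametrize \emph{ordered} arrangements, so isomorphism classes of arrangements are orbits of $\operatorname{Stab}(\mathcal T)=\{g\in\Sigma_8:\mathcal T^g=\mathcal T\}$ acting on $V_{\mathcal T}$. A permutation $g$ acts by reordering the planes and then renormalizing the five planes indexed by $Q$. We compute this finite group and its action on the finitely many points, respectively on the curve, as explicit rational maps.
\begin{itemize}
\item For No.~451 we look for $g$ mapping one point to its conjugate. The matrix realizing the projective equivalence between the two arrangements should have entries in $\QQ(i)$; we find it by solving the linear system that sends the first arrangement's eight planes to the conjugate's planes permuted by $g$.
\item For No.~452 we do the same and expect a matrix with rational entries.
\item For No.~453 we check that no $g\in\operatorname{Stab}(\mathcal T)$ maps one point to the other, which gives two non-isomorphic arrangements. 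The conjugation $\sqrt5\mapsto-\sqrt5$ acts on cross-ratio-type invariants, so an invariant of the arrangement that takes values $\alpha$ and $\bar\alpha\neq\alpha$ on the two points would also suffice. For example, a cross-ratio of four planes through a common line or point that is symmetrized over the stabilizer.
\item For No.~454 we exhibit $g$ and a M\"obius map $\lambda\mapsto\phi(\lambda)$ such that the element at $\lambda$ of one component is projectively equivalent to the element at $\phi(\lambda)$ of the conjugate component. We then check that $\phi$ maps the excluded set to its Galois conjugate.
\end{itemize}

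The main obstacle is No.~453: proving non-isomorphism needs the full stabilizer and a check of every element of it, not a single exhibited map. I would first compute $\operatorname{Stab}(\mathcal T)$ in Magma via (L7)-type sorting. I would then verify directly that no element sends the $\QQ(\sqrt5)$-point to its conjugate, by checking that the transformed and renormalized coefficient vector never equals the conjugate one. The other three cases only require producing and checking explicit equivalences.
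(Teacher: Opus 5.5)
The paper gives no proof of this statement. It cites \cite[Thm.~7.3]{CK-C} and only records that $V(I)\setminus V(J)$ splits over a quadratic field into two Galois-conjugate components. Your route is the computational approach behind that remark: normalize a quintuple in general position, decompose $\operatorname{sat}(I,J)$, then study $\operatorname{Stab}(\mathcal T)$-orbits on $V_{\mathcal T}$. Your observation that isomorphism classes are $\operatorname{Stab}(\mathcal T)$-orbits, not points of $V_{\mathcal T}$, is exactly what the uniqueness claims for Nos.~451, 452 and the non-isomorphism claim for No.~453 need. Your treatment of Nos.~453 and 454 is sound.

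The step that fails is the field-of-definition check for Nos.~451 and 452.
\begin{itemize}
\item Both $D$ and $\bar D$ have all planes defined over $K=\QQ(\sqrt{-3})$. Fix $g\in\operatorname{Stab}(\mathcal T)$. By Proposition~\ref{p:uniq}, the map $\Phi$ with $\Phi(\bar P_i)=P_{g(i)}$ is the solution, unique up to scalar, of a linear system with coefficients in $K$. So $\Phi$ has a representative matrix with entries in $K$.
\item If some rescaling of that matrix had entries in $\QQ(i)$, every ratio of entries would lie in $K\cap\QQ(i)=\QQ$. Then $\Phi$ would be rational up to scalar.
\item Hence ``a matrix with entries in $\QQ(i)$'' (your expectation for No.~451) and ``a matrix with rational entries'' (your expectation for No.~452) are the same condition. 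Inspecting $\Phi$ can never produce the $\QQ[i]$ versus $\QQ$ distinction in the statement.
\end{itemize}

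That distinction can only come from the lift to the double cover, $\tilde\Phi(x:y:z:t:u)=(\Phi(x:y:z:t):cu)$ with $F\circ\Phi=c^{2}\bar F$ (Section~\ref{sec:projtransf}). Compare the automorphism with $u\mapsto -iu$ used for $X_{(1:-1)}$ in Section~\ref{sec:dist}. So you must compute the constant $c$, not just $\Phi$, and check that $\tilde\Phi$ is defined over $\QQ(i)$, respectively over $\QQ$.

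This check depends on the chosen equation $F$. Replacing $F$ by $aF$ with $a\in K^{*}$ is a quadratic twist of the double octic and multiplies $c^{2}$ by $a/\bar a$; for $a=\sqrt{-3}$ this factor is $-1$. As written, your plan would at best show that $\Phi$ is rational in both cases, and would leave this clause unproved.
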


C. Meyer in his PhD thesis run an extensive computer
search of octic arrangements given by eight planes with integer
coefficients and compiled table with numerical data of 450
non-equivalent examples (\cite[Appendix A]{Meyer}).
We determined minimal incidence tables of these 450 octic arrangements
using sample equations that we get from C. Meyer.
Finally, the octic arrangement forgotten by C. Meyer we call 455.

	\def\arraystretch{1.4}
\begin{table}[h]
	\centering
	\begin{tabular}{|r||r|r|r|r|r|r|r|r|r|r|}
		\hline
		\rule[-1ex]{0pt}{2.5ex}  no.& $p_{3}$ &$p^{0}_{4}$  &$p^{1}_{4}$  & $p_{5}^{0}$ & $p_{5}^{1}$ & $p_{5}^{2}$ & $l_{3}$ & $h^{1,1}$ & $h^{1,2}$ & $\chi$ \\
		\hline
		\rule[-1ex]{0pt}{2.5ex} 451 & 13 & 6 & 3 & 0 & 1 & 0 & 1 & 46 & 0 & 92 \\
		\hline
		\rule[-1ex]{0pt}{2.5ex} 452 & 20 & 9 & 0 & 0 & 0 & 0 & 0 & 38 & 0 & 76 \\
		\hline
		\rule[-1ex]{0pt}{2.5ex} 453 & 20 & 9 & 0 & 0 & 0 & 0 & 0 & 38 & 0 & 76 \\
		\hline
		\rule[-1ex]{0pt}{2.5ex} 454 & 24 & 8 & 0 & 0 & 0 & 0 & 0 & 37 & 1 & 72 \\
		\hline
		\rule[-1ex]{0pt}{2.5ex} 455 & 28 & 7 & 0 & 0 & 0 & 0 & 0 & 36 & 2 & 68 \\
		\hline
	\end{tabular}
	\caption{\rule[-4mm]{0cm}{1cm}Forgotten arrangements}
\end{table}

C. Meyer proved that there are ten types of arrangements of six planes (\cite[Lem.~4.4]{Meyer}) and then used the ordered list of types of subarrangements of six planes to distinguish octic arrangements. The types of subarrangements of six planes for an octic arrangement can be computed  from the incidence tables, we have checked that (after appropriate renumbering of eight planes) the ordered sequence of types of subarrangements of six planes for the arrangement No. 308 and the forgotten arrangement No. 455 coincide.

In section~\ref{sec:EqsData} we collect equations of 455 families of octic arrangements, to keep the length of this paper reasonable we decided to not include further data. The extra information - (minimal) incidence table, invariant permutations, singularities - can be easily recovered from the equations using the Magma scripts that we also included in section \ref{sec:EqsData}.

\subsection*{Arrangements in a positive characteristic}
In the proof of Prop.~\ref{prop:it455} we have considered
$\mathcal T=	[1234, 1256, 1278, 1357, 1368, 1458, 1467, 2358, 2367, 2457, 2468, 3456, 3478, 5678]$ for which the ideal $I$ in the ring $\mathbb C[A_{1},B_{1},C_{1},P_{1},A_{2},B_{2},C_{2},P_{2}]$ defined by incidence minors
is trivial. However, the ideal $I_{\ZZ}$ defined by the same minors in the ring
of polynomials with integral coefficients
$\ZZ[A_{1},B_{1},C_{1},P_{1},A_{2},B_{2},C_{2},P_{2}]$ equals
\[I_{\ZZ}=\langle2A_{1},A_{1}+B_{1},A_{1}+C_{1},P_{1},2A_{2},A_{2}+B_{2},A_{2}+C_{2},A_{2}+P_{2}\rangle \not= \langle0\rangle\]
is not trivial. More precisely, the quotient ring equals
\[\ZZ[A_{1},B_{1},C_{1},P_{1},A_{2},B_{2},C_{2},P_{2}]/I_{\ZZ}\cong\ZZ[A_{1},A_{2}]/(2A_{1},2A_{2})\cong \mathbb F_{2}[A_{1},A_{2}]\cong \mathbb F_{2}[A_{1}]\otimes_{\ZZ}\mathbb F_{2}[A_{2}].\]
Conseqently, the ideal $I_{\ZZ}$ defines a single point in $\PP_{\mathbb F_{2}}^{3}\times\PP_{\mathbb F_{2}}^{3}$.

As a consequence, $\mathcal T$ is an incidence table of an octic arrangement in characteristic 2
\[x(x-t)y(y-t)z(z-t)(x+y+z)(x+y+z+t)=0,\]
but it is not an incidence table of an octic arrangement in any other characteristic.

We carried out similar computations for 60 elements in $\mathcal L$ for which $\mathcal T$ is not an incidence table for an octic arrangement in $\PP_{\CC}^{3}$ and found 15 incidence tables of octic arrangements in characteristic 2 and one incidence table of an octic arrangement in characteristic 3.

Octic arrangements in characteristic 2 that do not lift to characteristic 0
\begin{eqnarray*}
	&&xyz(x+y+z)t(A_{3}x+A_{2}y+A_{3}t)(A_{0}A_{3}x+A_{1}A_{2}z+A_{1}A_{3}t)\times\\
	&&\rule{10cm}{0cm}(A_{0}x+A_{2}y+A_{2}z+A_{3}t)=0\\
	&&xyz(x+y+z)t(x+y+t)(x+z+t)(A_{0}x+A_{1}y+A_{2}z+A_{3}t)=0\\
	&&xyz(x+y+z)(A_{0}x+A_{1}y+A_{2}z)t(x+y+t)(x+z+t)=0\\
	&&xy(x+y)zt(x+z+t)(A_{0}y+A_{1}z-A_{0}t)(A_{0}x+A_{0}y+A_{1}z+A_{0}t)=0\\
	&&xyz(x+y+z)t(A_{2}x+A_{0}y+A_{2}t)(A_{0}A_{2}x+A_{0}A_{1}z+A_{1}A_{2}t)(A_{0}y+A_{0}z+A_{2}t)=0\\
&&xyz(x+y+z)t(A_{2}x+A_{0}y+A_{2}t)(A_{0}x+A_{1}z+A_{0}t)(A_{0}^{2}y+A_{1}A_{2}z+A_{0}A_{2}t)=0\\
	&&xy(x+z+t)(A_{0}x+A_{0}y+A_{2}z+A_{2}t)z(A_{0}x+A_{1}y+A_{2}z)t(A_{0}x+A_{1}y+A_{2}t)=0\\
	&&xy(x+y)z(A_{0}x+A_{1}y+A_{0}z)t(A_{0}x+A_{1}y+A_{0}t)(x+z+t)=0\\
	&&xy(x+y)z(x+z)t(y-z+t)(A_{0}x+A_{0}y+A_{0}z+A_{1}t)=0\\
	&&xyz(x+y+z)(A_{0}x+(A_{0}-A_{1})y+A_{1}z)t(x+y+t)(x+z+t)=0\\
	&&xy(x+y)zt(x+z+t)(A_{0}y+(A_{1}-A_{0}z)-A_{0}z)(A_{0}x+A_{0}y+A_{1}z+A_{0}t)=0\\
	&&xy(x+y)z(x+z)t(x+t)(y+z+t)=0\\
	&&xyz(x+y+z)t(A_{1}x+A_{0}y+A_{1}t)(A_{1}x+A_{0}z+A_{1}t)(A_{0}y+A_{0}z+A_{1}t)=0\\
	&&xyz(x+y+z)t(A_{1}x+A_{0}y+A_{1}t)((A_{1}-A_{0})x+(A_{1}-A_{0})z-A_{1}t)\times\\
	&&\rule{8.6cm}{0cm}(A_{0}x+A_{0}y+(A_{0}-A_{1}z)+A_{1}t)=0\\
	&&xyz(x+y+z)t(x+y+t)(x+z+t)(y+z+t)=0
\end{eqnarray*}
The geometry of double octics in characteristic 2 substantially differs from the case of characteristic 0. We discuss in more detail the simplest example given by the last equation from the above list. In characteristic zero this equation defines an octic arrangement with the minimal incidence table
\begin{eqnarray*}
	&&[[1,2,3,4], [1,2,5,6], [1,2,7,8], [1,3,5,7], [1,3,6,8], [1,4,5,8], [2,3,5,8], [2,3,6,7],\\ &&\phantom{[}[2,4,5,7], [3,4,5,6]]
\end{eqnarray*}

Singularities of this octic arrangement consist of 10 fourfold points of type $p_{4}^{0}$, 16 (isolated) triple points and 28 double lines. Singular locus of the double octic
\[\{u^{2} = xyz(x+y+z)t(x+y+t)(x+z+t)(y+z+t)\}\subset \PP_{\CC}(1,1,1,1,4)
\]
coincide (in characteristic different from 2) with singularities of the octic arrangement. The resolution of singularities (\cite{CSz}) can be performed by first resolving the octic arrangement (blow-up of all fourfold points followed by blow--ups of double lines) and the taking the double covering.

The same equation in characteristic 2 defines an octic arrangement with the minimal incidence table
\begin{eqnarray*}
	&&[[1, 2, 3, 4], [1, 2, 5, 6],[1, 2, 7, 8], [1, 3, 5, 7], [1, 3, 6, 8], [1, 4, 5, 8], [1, 4, 6, 7], [2, 3, 5, 8],\\
	&&\phantom{[}[2, 3, 6, 7], [2, 4, 5, 7], [2, 4, 6, 8], [3, 4, 5, 6], [3, 4, 7, 8], [5, 6, 7, 8]]
\end{eqnarray*}

Singularities of the octic arrangement consists of 14 fourfold points (of type $p_{4}^{0}$) and 28 double lines. However, this time singular locus of the double octic
\[\{u^{2} = xyz(x+y+z)t(x+y+t)(x+z+t)(y+z+t)\}\subset \PP_{\mathbb F_{2}}(1,1,1,1,4)
\]
is larger, it contains additional 7 double lines that are contained in the branch locus. More precisely, the 14 fourfold points fall in 7 pairs of ``opposite'' points that do not belong to a common arrangement plane (i.e. given by two disjoint quadruples from the incidence table). A line joining a pair of opposite fourfold point is a double line.

For instance the disjoint quadruples $[1,2,3,4]$ and $[5,6,7,8]$ define the fourfold points
\begin{eqnarray*}
&&(0,0,0,1):\quad x=y=z=x+y+z=0,\\
&&(1,1,1,0):\quad t=x+y+t=x+z+t=y+z+t=0.
\end{eqnarray*}
The double octic  $Y_{2}\subset\PP_{\mathbb F_{2}}$ is singular along the line $\{x=y=z\}\subset \PP_{\mathbb F_{2}}^{3}$ joining these points.

After blowing-up of fourfold points double lines become pair-wise disjoint, blowing-up all double lines we get a (nonsingular) Calabi--Yau manifold $X_{2}$.
This Calabi--Yau manifold has very rich geometry. It has a large group of birational isomorphisms induced by projective transformations of $\PP^{3}$ (cf. Section~\ref{sec:projtransf}) and fibrations induced by  double lines, fourfold points, ``opposite'' pairs of fourfold points, and pairs of skew double lines  (see Section \ref{sec:fibr}).

A double octic Calabi--Yau manifold in characteristic 2 is a purely inseparable double cover of a rational threefold (blow-ups of $\PP^{3}$ at points and rational curves). Consequently, double octics in characteristic 2 have completely different properties then in other characteristics. Most notably, the resolution of singularities introduced in \cite{CScvS} does not work in characteristic~2. Fibers of above-mentioned fibrations on $X_{2}$ are also purely inseparable coverings of $\PP^{1}$ or blow-ups of $\PP^{2}$. The Calabi--Yau manifold $X_{2}$ has several fibrations by quasi-elliptic curves, it is 2 purely inseparable double covering of the fiber product if two rational, quasi-elliptic surfaces (cf. \cite{Hirokado2}).

From the above discussion it follows that a resolution of singularities of a double octic in characteristic 2 cannot be realized, in general,  by a sequence of admissible blow-ups. Consequently, known formulas for the Euler characteristic, Hodge and Betti numbers and the description of the infinitesimal deformation space etc. may not work in characteristic 2.

The following sequence
\begin{eqnarray*}
	&&[[1,2,3,4],
	[1,2,5,6],
	[1,2,7,8],
	[1,3,5,7],
	[1,3,6,8],
	[1,4,5,8],
	[2,3,5,8],
	[2,3,6,7],\\
	&&\phantom{[}[2,4,5,7],
	[3,4,5,6],
	[4,6,7,8]]
\end{eqnarray*}
is the only minimal incidence table of an octic arrangement in characteristic 3, but not of an arrangement in characteristic 0 (and in fact in any characteristic $\not=3$). The unique, up to an isomorphism of the projective space $\PP_{\mathbb F_{3}}^{3}$, octic arrangement with this incidence table is given by the same equation
\[xyz(x+y+z)t(x+y+t)(x+z+t)(y+z+t)=0.\]
Contrary to the situation in characteristic 2, the double octic
\[\{u^{2} = xyz(x+y+z)t(x+y+t)(x+z+t)(y+z+t)\}\subset \PP_{\mathbb F_{3}}(1,1,1,1,4)\]
has a resolution of singularities $X_{3}$ by a sequence of admissible blow-ups, which is a smooth Calabi--Yau threefold in characteristic 3 non-liftable to characteristic 0 (\cite{CvS3}).
From non-liftability of $X_{3}$ it follows (\cite{Lam}) that the third Betti number of $X_{3}$ vanishes.

Partial resolutions of singularities of the two above double octics may be considered as fibers of the same  $\operatorname{Spec}(\mathbb Z)$-scheme. Denote by $\mathcal Y$ the subscheme of weighted projective space $\PP_{\operatorname{Spec}\ZZ}(1,1,1,1,4)$ defined by the same equation
\[\mathcal Y=\{u^{2} = xyz(x+y+z)t(x+y+t)(x+z+t)(y+z+t)\}\subset \PP_{{\operatorname{Spec}\ZZ}}(1,1,1,1,4)\]
Generic fiber of this scheme is a projective variety over $\QQ$ with 28 double lines, 10 fourfold points (of type $p_{4}^{0}$) and 16 isolated triple points.
Let $\mathcal X$ be a partial resolution of $\mathcal Y$ obtained by blow-ups of closures of fourfold points in the generic fiber followed by blow-ups of closures of double lines. This partial resolution is a resolution of singularities of the generic fiber ``propagated'' to the closed fibers. Consequently, the generic fiber $Y_{0}$ of $\mathcal Y$ is a smooth Calabi--Yau manifold over $\QQ$. The fibers $Y_{p}$, for $p\in \operatorname{Spec}(\ZZ)$, $p>3$, are also smooth. In fact the scheme $\mathcal X|\left((\operatorname{Spec}(\ZZ)\setminus\{2,3\}\right
)$ is a smooth $\operatorname{Spec}(\ZZ)\setminus\{2,3\}$-scheme.
The fibers $X_{2}$ and $X_{3}$ are singular. By \cite{CvS3} $X_{3}$ is a nodal variety with two nodes admitting a projective small resolution. Singular locus of the variety $X_{2}$ consists of seven double lines, four of them have a pinch-point. More precise description of geometric and arithmetic of singular fibers can be read off from a semistable degeneration. An algorithm for computing a semistable  degeneration of a family of double octic Calabi--Yau threefolds was developed in \cite{Oczko}.

\section{Projective transformations of double octics}
\label{sec:projtransf}

Let $D=D_{1}\cup\dots\cup D_{8}$ and $D'=D_{1}'\cup\dots\cup D_{8}'$
be octic arrangements with defining equations $F(x:y:z:t)$
and $F'(x:y:z:t)$ respectively. The double coverings of
$\PP$ branched along $D$ and $D'$ can be given as hypersurfaces in the
weighted projective space $\mathbb P(1,1,1,1,4)$
\[Y:=\{(x:y:z:t:u)\in \mathbb P(1,1,1,1,4)\colon
  u^{2}=F(x:y:z:t)\} \subset\mathbb P(1,1,1,1,4),\]
and
\[Y':=\{(x:y:z:t:u)\in \mathbb P(1,1,1,1,4)\colon
  u^{2}=F'(x:y:z:t)\} \subset\mathbb P(1,1,1,1,4).\]
Any projective transformation $\Phi:\PP^3\lra\PP^3$
such that $\Phi(D')=D$ induces a projective transformation $\tilde
\Phi:Y'\lra Y$  by
\[\tilde\Phi(x:y:z:t:u)=(\Phi(x:y:z:t):cu),\]
where $c\in\CC^{*}$ is a non-zero complex number satisfying
$F\circ\Phi=c^{2}F'$ (the map $\tilde\Phi$ is determined up-to the sheet
inverting automorphism of $Y$ or $Y'$).
By the universal property of the blow-up the map $\tilde\Phi$ lifts to an automorphism $\Psi:X'\lra X$ of Calabi--Yau resolutions of
singularities $X$ and $X'$ of varieties $Y$ and $Y'$.

As $\Phi(D')=D$ there exists a permutation $\sigma\in\Sigma_{8}$ such
that $\Phi (P_{i}')=P_{\sigma(i)}$. Planes
$P_{i_{1}}',\dots,P_{i_{4}}'$ intersect iff planes
$P_{\sigma(i_{1})},\dots,P_{\sigma(i_{4})}$ intersect. Equivalently,
the application of the permutation $\sigma$ to the incidence table of $D'$
coincide with the incidence table of $D$. In particular, the
arrangements $D$ and $D'$ have the same minimal incidence table $\mathcal
T$ and the permutation $\sigma$ preserves $\mt$.

Conversely, an invariant permutation $\sigma$ of the minimal incidence table
determines a projective transformation $\phi:\PP^3\lra\PP^3$ such that
$\Phi(P_{i}')=P_{\sigma(i)}$.
\begin{prop}\label{p:uniq}
	Assume that $ D=P_1\cup\dots\cup P_8 $, $ D'=P_1'\cup\dots\cup P_8' $ are octic arrangements. For any permutation $\sigma\in\Sigma_8$ there exists at most one projective automorphism $\Phi\in\operatorname{\mathbb PGL}(3)$ such that $\Phi(P_i')=P_{\sigma(i)}$ for $i=1,\dots,8$.
\end{prop}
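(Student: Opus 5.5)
The plan is to reduce uniqueness to the statement that a projective automorphism fixing every plane of an octic arrangement is the identity. Suppose $\Phi_1,\Phi_2\in\operatorname{\mathbb PGL}(3)$ both satisfy $\Phi_k(P_i')=P_{\sigma(i)}$ for $i=1,\dots,8$. Then $\Psi:=\Phi_2^{-1}\circ\Phi_1$ satisfies $\Psi(P_i')=P_i'$ for every $i$. So it suffices to show that the only element of $\operatorname{\mathbb PGL}(3)$ preserving each of the eight planes of $D'$ is the identity. We work dually: $\Psi$ acts on the dual space $(\PP^3)^\vee$ via the inverse transpose, and it fixes the eight points $[P_1'],\dots,[P_8']$.

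The key input is the existence of five planes in general position, so that their dual points form a projective frame. Among the eight planes, choose five such that no four of them meet in a point; equivalently, any four of the corresponding dual points are linearly independent. A projective transformation fixing the five points of a frame in $\PP^3$ is the identity. Hence $\Psi^\vee=\mathrm{id}$, so $\Psi=\mathrm{id}$ and $\Phi_1=\Phi_2$.

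The main obstacle is guaranteeing the existence of such a quintuple, that is, a $5$-subset of $\{1,\dots,8\}$ containing no quadruple of the incidence table. The proof of Prop.~\ref{prop:it455} shows that such a quintuple exists for every element of $\mathcal L$ except one table, and that table is not realized in characteristic $0$. So the claim holds for all $455$ octic arrangements.

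For a more robust argument that avoids relying on the computation, work directly with fixed points. A linear map fixing eight points that span $(\CC^4)^\vee$ is, in suitable lifts, diagonalizable with respect to those vectors. Its eigenspaces $E_\lambda$ then satisfy $\bigoplus E_\lambda=\CC^4$, and every dual point $[P_i']$ lies in a single eigenspace.

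If there are at least two eigenspaces, split them as $V_1\oplus V_2$ with $\dim V_1=a$ and $\dim V_2=4-a$, where each $[P_i']$ lies in $V_1$ or in $V_2$. Dually this means either $a\ge 2$ points lie on a projective subspace of dimension $a-1$, or the complementary statement holds. Concretely, if $a=1$, the other $\ge 7$ planes all contain a common point, contradicting the condition that no six planes meet. If $a=2$, the planes split into two pencils through two lines, so one pencil contains at least four planes through a common line, contradicting the second condition of Definition~\ref{def:oa}. The case $a=3$ is symmetric to $a=1$.

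Hence $\Psi$ is a scalar, that is, the identity in $\operatorname{\mathbb PGL}(3)$. This eigenspace argument is the one I would write out, since it uses only Definition~\ref{def:oa}.
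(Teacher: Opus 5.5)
Your proof is correct. The eigenspace version is the right one to write up, since the frame version depends on the computer check in the proof of Proposition~\ref{prop:it455}.

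The paper works in fixed coordinates instead. It normalizes $\sigma=\operatorname{Id}$ and $F_1,\dots,F_4=X,Y,Z,T$, and writes $\Phi=(\lambda_1F_1',\dots,\lambda_4F_4')$. It then shows that the linear system $F_i\circ\Phi=\lambda_iF_i'$ has a one-dimensional solution space in $(\lambda_1,\dots,\lambda_8)$. To do so it takes a nonzero solution with a vanishing coordinate and splits into cases by $k=\#\{i\le 4:\lambda_i=0\}$: $k=1,3$ give seven planes through a point, $k=2$ gives five planes through a line, and $k=0$ gives a dependence among $F_1',\dots,F_4'$.

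Underneath, this is your argument. If $\lambda,\mu$ belong to $\Phi_1,\Phi_2$, then $F_i'\circ\Psi=(\lambda_i/\mu_i)F_i'$. So $\lambda-c\mu$ vanishes exactly at the indices $i$ with $F_i'$ in the $c$-eigenspace of $\Psi^*$ on linear forms. Since $F_1',\dots,F_4'$ is an eigenbasis, $k$ is the dimension of that eigenspace. Hence the paper's cases $k=1,2,3$ are your cases $a=1,2,3$, and its cases $k=0$ and $k=4$ are absorbed automatically in your setup.

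Your formulation buys a coordinate-free argument with a cleaner case split, and it makes transparent that exactly the two conditions of Definition~\ref{def:oa} are used. The paper's formulation buys an algorithm: the same linear system is what one solves to compute $\Phi$ and $\phi$ for an invariant permutation, as in Example~\ref{ex280} and the Magma computations.

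State explicitly the two facts you use silently. The eight forms span the dual space because no six planes meet. In the case $a=1$, at most one form lies in $V_1$ because the eight planes are distinct.
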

\begin{proof}
Denote by $F_i(X,Y,Z,T)=a_{i,1}X+a_{i,2}Y+a_{i,3}Z+a_{i,4}T$ (resp. $F_i'(x,y,z,t)=a_{i,1}'x+a_{i,2}'y+a_{i,3}'z+a_{i,4}t'$) a linear form defining $P_i$ (resp. $P_i'$), $ i=1,\dots,8 $.
Renumbering the planes $ P_i $ and $ P_i' $ and changing the coordinates in $\PP^3$ we can assume that $\sigma=\operatorname{Id}$ and
\[ F_1=X, F_2=Y, F_3=Z, F_4=T. \]
Since $ \Phi(P_{i}')=P_{i} $ there exists a non-zero constant $\lambda_{i}$ such that $F_{i}\circ \Phi=\lambda_{i}F_{i}'$. Linear forms $F_{1},F_{2},F_{3},F_{4}$ are linearly independent, which implies that the forms $F_{1}',F_{2}',F_{3}',F_{4}'$ are also linearly independent.
Moreover,
\[ \Phi=(\lambda_{1}F_{1}',\dots,\lambda_{4}F_{4}') \]
and
\begin{equation}
\begin{aligned}\label{leq}
a_{5,1}\lambda_{1}F_{1}'+ a_{5,2}\lambda_{2}F_{2}'+ a_{5,3}\lambda_{3}F_{3}'+ a_{5,4}\lambda_{4}F_{4}' &= \lambda_{5}F_{5}'\\
a_{6,1}\lambda_{1}F_{1}'+ a_{6,2}\lambda_{2}F_{2}'+ a_{6,3}\lambda_{3}F_{3}'+ a_{6,4}\lambda_{4}F_{4}' &= \lambda_{6}F_{6}'\\
a_{7,1}\lambda_{1}F_{1}'+ a_{7,2}\lambda_{2}F_{2}'+ a_{7,3}\lambda_{3}F_{3}'+ a_{7,4}\lambda_{4}F_{4}' &= \lambda_{7}F_{7}'\\
a_{8,1}\lambda_{1}F_{1}'+ a_{8,2}\lambda_{2}F_{2}'+ a_{8,3}\lambda_{3}F_{3}'+ a_{8,4}\lambda_{4}F_{4}' &= \lambda_{8}F_{8}'\\\end{aligned}
\end{equation}

Our goal is to show, that the numbers $\lambda_{1},\dots,\lambda_{8}$ are determined uniquely by the above equations, up-to multiplication by a non-zero constant. If it is not the case there exists a non-zero solution $\lambda_{1},\dots,\lambda_{8}$ such that one of the numbers $\lambda_{i}$ equals zero.

  We shall consider case-by case the number $k:=\#\{i\in\{1,\dots,4\}: \lambda_{i}=0\}$. Without loss of generality we can assume that $\lambda_{i}=0$ for $i=1,\dots,k$.
  As the numbers $\lambda_{1},\dots,\lambda_{4}$ determine uniquely $\lambda_{5},\dots,\lambda_{8}$ the case $k=4$ is impossible.
  The linear forms $F_{i}'$ are non-proportional hence in the case $k=3$ we get $a_{5,4}=a_{6,4}=a_{7,4}=a_{8,4}=0$ and consequently the planes $P_{1}$, $P_{2}$, $P_{3}$, $P_{5}$, $P_{6}$, $P_{7}$ and $P_{8}$ contain the point $(0,0,0,1)$ contradicting the definition of an octic arrangement.

  If $k=2$ the left-hand sides of \eqref{leq} are linear combinations of $F_{3}'$ and $F_{4}'$, as at most one of the planes $P_{5}', P_{6}', P_{7}', P_{8}'$
contains the line $P_{3}'\cap P_{4}'$ at least three of the numbers $\lambda_{5}, \lambda_{6}, \lambda_{7}, \lambda_{8} $ are equal zero, assume $\lambda_{5} = \lambda_{6} = \lambda_{7}=0$. Then $a_{5,3} = a_{5,4} = a_{6,3} = a_{6,4} = a_{7,3} = a_{7,4}=0$ and the planes $P_{1}, P_{2}, P_{5}, P_{6}, P_{7}$ contain the line $X=Y=0$, which contradicts assumptions.

Since $F_{i}\not=a_{i,1}X$ for $i=5,\dots,8$ and the linear forms $F_{1}'$, $F_{2}'$, $F_{3}'$, $F_{4}'$ are linearly independent
in the case of $k=1$  we get $\lambda_{i}\not=0$ for $i=5,\dots,8$, consequently the planes $F_{2}', F_{3}', F_{4}', F_{5}', F_{6}', F_{7}', F_{8}'$ intersect, which is impossible.

Finally, $k=0$ implies that $\lambda_{j}=0$ for some $i\in\{5,\dots,8\}$, without loss of generality we can assume that $\lambda_{5}=0$. As $\lambda_{i}\not=0$ for $i\in\{1,\dots,4\}$ and $F_{5}\not=0$ first equation in \eqref{leq} implies that $F_{1}',\dots,F_{4}'$ are linearly dependent, which contradicts our assumptions.
\end{proof}

Using MAGMA code presented in the Appendix B we computed groups of invariant permutations of all octic arrangements listed in Appendix A. Using standard MAGMA functions (\texttt{IdentifyGroup}, \texttt{GroupName}, \texttt{FewGenerators}) we identified computed groups in the table of small groups, found their name and small set of permutations generating the group.
Altogether there are 34 isomorphisms types of groups of symmetric permutations

\noindent
 $\langle\operatorname{Id}\rangle$,
 $C_2$,
 $C_3$,
 $C_4$,
 $C_2^2$,
 $S_3$,
 $C_6$,
 $C_8$,
 $D_4$,
 $C_2^3$,
 $D_6$,
 $D_4\times C_2$,
 $S_4$,
 $S_3\times C_2^2$,
 $C_2^2\wr C_2$,
 $C_8\rtimes C_2^2$,
 $C_2^2\times D_4$,
 $S_3^2$,
 $S_3\times D_4$,
 $C_2\times S_4$,
 $C_2\wr C_2^2$,
 $S_3\wr C_2$,
 $C_2\times S_3^2$,
 $C_2^2\times S_4$,
 $C_2\wr D_4$,
 $S_3\times S_4$,
 $C_\times S_3\wr C_2$,
 $C_2^2\rtimes S_4\rtimes C_2$,
 $C_2^3\rtimes S_4$,
 $C_2\wr A_4.C_2$,
 $A_4^2.C_2^2$,
 $S_3\times S_5$,
 $A_4^2.D_4$,
 $S_8$.

For an invariant permutation $\sigma$ of an octic arrangement $D$ we can reverse the procedure in the proof of prop.~\ref{p:uniq} to compute the transformation $\Phi$ of the projective space $\PP^{3}$ and the transformation $\phi$ of the parameter space using  elementary linear algebra.

\begin{exmp}\label{ex280}
	The	arrangement no. 280 has a unique non-identity invariant permutation $(1,2)(3,5)(4,6)(7,8)$. We are looking for projective transformations $\Phi:\PP^{3}\lra \PP^{3}$ and $\phi:\PP^{2}\lra\PP^{2}$ such that
	\[F_{\sigma(i)}^{A}\circ \Phi=\lambda_{i}F_{i}^{\phi(A)},\]
	where $(F_{1}^{A},\dots,F_{8}^{A})$ are the equations (properly ordered) of planes defining the arrangement no. 280 corresponding to parameter $A$.
	Since
	\[(F_{1}^{A},\dots,F_{8}^{A})=\left(x, y, z, A_{0}x + A_{1}y - A_{0}z, z + t, A_{0}x + A_{2}y + A_{2}z + A_{2}t, x + t, y + t\right)\]
	we get the following system of equations (to simplify notation we introduce $(B_{0},B_{1},B_{2}) = \phi(A_{0},A_{1},A_{2})$)
	\begin{eqnarray*}
		\Phi_{2}&=&\lambda_{1}x\\
		\Phi_{1}&=&\lambda_{2}y\\
		\Phi_{3}+\Phi_{4}&=&\lambda_{3}z\\
		A_{0}\Phi_{1}+A_{2}\Phi_{2}+A_{2}\Phi_{3}+A_{2}\Phi_{4} &=& \lambda_{4}(B_{0}x+B_{1}y-B_{0}z)\\
		\Phi_{3}&=&\lambda_{5}(z+t)\\
		A_{0}\Phi_{1}+A_{1}\Phi_{2}-A_{0}\Phi_{3} &=& \lambda_{6}(B_{0}x+B_{2}y+B_{2}z+B_{2}t)\\
		\Phi_{2}+\Phi_{4}&=&\lambda_{7}(x+t)\\
		\Phi_{1}+\Phi_{4}&=&\lambda_{8}(y+t)
	\end{eqnarray*}
Solving this system of equations we get
$\lambda_{2}=\lambda_{1}$, $\lambda_{3}=-\lambda_{1}$, $\lambda_{5}=-\lambda_{1}$,  $\lambda_{7}=\lambda_{1}$, $\lambda_{8}=\lambda_{1}$. Normalizing $\lambda_{1}=1$ we conclude
\[\Phi(x,y,z,t)=(y,x,-z-t,t)\]
and \[\phi(A_{0},A_{1},A_{2})=(A_{1}A_{2},A_{0}A_{1},A_{0}A_{2}).\]
\end{exmp}
Using a Magma implementation of a similar algorithm we have computed over 3600 transformations: we have completely omitted the two most generic Arrangements No. 449 and 450 (depending on 8 and 9 parameters and with groups of invariant permutations isomorphic to $S_{4}^{2}$ and $S_{8}$ respectively). For 23 Arrangements with large groups of Symmetries we have considered only small sets of generators.

\section{Geometrically distinguished subfamilies and elements}
\label{sec:dist}
Let $G$ be a finite group acting fiber-wise on a family $\pi:\mathcal X\lra B$ of Calabi--Yau threefolds: every element  $\Phi:\mathcal X\lra\mathcal X$ of $G$ is an automorphism of $\mathcal X$  satisfying $\pi\circ\Phi=\pi$. Then there is an automorphism $\phi:B\lra B$ such that for any element $b\in B$ the map $\Phi$ restricts to $\Phi_{b}:\mathcal X_{b}\lra \mathcal X_{\phi(b)}$.
Automorphisms $\phi:B\lra B$ form a group $g$ acting on $B$.
We call the restriction $\mathcal X|B_{1}$ of the family $\mathcal X$
to an irreducible component $B_{1}$ of the fixed locus $\operatorname{Fix}(g)$ of $g$ a \emph{geometrically distinguished subfamily} of $\mathcal X$. If $B_{1}=\{b_{0}\}$ is an isolated point, we call $\mathcal X_{b_{0}}$ a \emph{distinguished element} of the family $\mathcal X$.

In this situation restriction of the local system $R^{3}\pi_{*}\mathbb C_{\mathcal Y}$ to $B_{1}$ contains subsystem $\mathcal R$ such that for an element $b\in B_{1}$ the fiber $\mathcal R_{b}$ corresponds to the fixed locus of the action induced by $G$ on $H^{3}(\mathcal X_{b})$. In the case of a geometrically distinguished element $\mathcal X_{b_{0 }}$ we get a submotive in $H^{3}(\mathcal X_{b_{0}})$.

For computation of distinguished subfamilies of a family $\mathcal X$ of double octic Calabi--Yau threefolds we use the following strategy. For each transformation $\Phi$ of $\mathcal X$ (corresponding to an invariant permutation of an octic Arrangement) we compute irreducible components of the fixed locus of corresponding transformation $\phi$ of the parameter space. Denote by $\mathcal F$ the set of all irreducible components of all transformations $\phi$ for the family $\mathcal X$. Then we add to $\mathcal F$ irreducible components of intersections of elements of $\mathcal F$ until the process stabilizes.

Let $\mathcal M$ be the $8\times4$ matrix  of homogeneous polynomials in $\CC[A_{0},\dots,A_{k}]$ defining the family $\mathcal X$.
Let $W$ be the zero-locus of the product of non-zero degree four minors of $\mathcal M$. Then $W$ is the discriminant of the family $\mathcal X$: $(a_{0}:\dots:a_{k})\in W$ iff the eight planes defined by linear form $\mathcal(a_{0}:\dots:a_{k}) (x,y,z,t)^{T}$ have incidence table strictly larger than the incidence table of a generic element of $\mathcal X$.
Consequently, $W$ corresponds to configurations of eight planes that either fail to be an octic arrangement, or define an octic arrangement of type different that for a generic element of $\mathcal X$, hence we remove from $\mathcal F$ varieties which are contained in $W$.

In the context of modularity and Mirror Symmetry of particular interest are geometrically distinguished elements and one parameter families. Since this construction yields a large number of examples we just list a small number of examples demonstrating different situations.

\subsection{Geometrically distinguished elements in families of double octic Calabi--Yau manifolds.}

	C. Meyer made a list (\cite[p. 59]{Meyer}) of double octics $X$ with $h^{1,2}(X)=1$ and modular in the sense that the $L$-series  factors
	\[L(X,s)=L(f,s)L(g,s-1)\]
	into a product $L$-series of a weight four modular form $f$ and a Tate twist of weight two modular form $g$, modularity of all examples from this list except one was proved in \cite{CM}.

	In \cite{CK-C} we have observed that several elements from this list are distinguished in their universal deformation by an action of an involution preserving canonical form.
	D. Burek in \cite{Burek}  constructed crepant resolutions of quotients of these double octics by the corresponding action and consequently obtained  rigid Calabi--Yau threefolds realizing  associated weight four modular forms.
	All modular examples from Meyer's list, except for Arrangements Nos. 8, 154 and 275 are geometrically distinguished.

\begin{exmp}
	Consider an octic arrangement No. 6
	\[D_{A}=\{(x:y:z:t)\in\PP^{3}: tx(x+t)y(y+t)z(z+t)(A_{0}x+A_{1}y+A_{2}z)=0\},\]
	and denote by $X_{A}$ corresponding double octic Calabi--Yau threefold. C. Meyer conjectured (\cite[p. 61]{Meyer}) that the element $X_{(1,1,1)}$ of this family is modular, more precisely that its $L$-series equals
	\[L(X_{(1,1,1)})=L(f,s)L(g,s-1)^{2},\]
	where $f$ is a modular form of weight four and level 96 (96.4.a.b \cite{LMFDB}) and $g$ is a weight two modular form of level 32 (32.2.a.a). Moreover, Meyer checked that the coefficients of $p^{-s}$ of both sides agrees for prime numbers $5\le p\le97$.

	The group of invariant permutations of this octic arrangement is isomorphic to the symmetric group $\Sigma_3$ and contains permutations
	\[\operatorname{Id}, (2,4)(3,5), (2,6)(3,7), (4,6)(5,7), (2,4,6)(3,5,7), (2,6,4)(3,7,5)\]
	that lifts to

	$\displaystyle (24)(35),$\phantom{12} \quad $(x,y,z,t)\mapsto (y,x,z,t)$, \quad $(A_0, A_1, A_2)\mapsto(A_1, A_0, A_2)$

	$\displaystyle (26)(37),$ \phantom{12}\quad $(x,y,z,t)\mapsto (z,y,x,t)$, \quad $(A_0, A_1, A_2)\mapsto(A_2, A_1, A_0)$

	$\displaystyle (46)(57),$ \phantom{12}\quad $(x,y,z,t)\mapsto (x,z,y,t)$, \quad $(A_0, A_1, A_2)\mapsto(A_0, A_2, A_1)$

	$\displaystyle (246)(357),$ \quad $(x,y,z,t)\mapsto (y,z,x,t)$, \quad $(A_0, A_1, A_2)\mapsto(A_2, A_0, A_1)$

	$\displaystyle (264)(375),$ \quad $(x,y,z,t)\mapsto (z,x,y,t)$, \quad $(A_0, A_1, A_2)\mapsto(A_1, A_2, A_0)$

Consequently, the family contains three geometrically distinguished subfamilies $X_{(A_{0},A_{0},A_{1})}$, $X_{(A_{0},A_{1},A_{0})}$, $X_{(A_{1},A_{0},A_{0})}$ that intersects at the distinguished element
$X_{(1,1,1)}$. A. Czarnecki \cite{czarnecki} used the above symmetries to give a proof of modularity of $X_{(1,1,1)}$
(conjectured by Meyer \cite[p. 61]{Meyer}.

\end{exmp}

\subsection{A Calabi--Yau threefold with Complex Multiplication}

The one parameter family $\mathcal X$ of double octics for the Arrangement No. 2 is defined as a resolution of the singular double cover (degree 8 hypersurface in weighted projective space)
\[
\{(x,y,z,t,u)\in\PP(1,1,1,1,4): u^{2} = xy(A_{0} x + A_{1} y)(x + z)z (y + t) t (x + y + z + t)\}.
\]
Its group of invariant permutations is isomorphic to the dihedral group $D_{4}$ of order 8 generated by $(1,2)(4,6)(5,7)$ and $(1,4,6,2)(2,5,7,8)$. Generators of the invariant permutations group induces the following transformations of $\mathcal X$
\begin{eqnarray*}
	&&(1,2)(4,6)(5,7),\ \  \Phi: (x,y,z,t)\mapsto(y,x,t,z),\ \  \phi:(A_{0},A_{1}) \mapsto (A_{1},A_{0})\\
	&&(1,4,6,2)(2,5,7,8),\ \  \Phi: (x,y,z,t)\mapsto(A_{1}y,-A_{0}(y+t), -A_{0}x-A_{1}y, A_{0}(x+y+z+t)),\\  &&\rule{3.9cm}{0cm}\phi:(A_{0},A_{1}) \mapsto (A_{0},A_{1	})
\end{eqnarray*}
There are three non-trivial transformations that acts on the parameter space as identity, and four that acts as $\phi:(A_{0},A_{1}) \mapsto (A_{1},A_{0})$.
Consequently, we get two fixed points $(A_{0},A_{1}) = (1,1)$ and $(A_{0},A_{1}) = (1,-1)$.
At the point $A_{0}=1,A_{1}=-1$ we get the arrangement No. 1, the only distinguished element of family $\mathcal X$ is the double octic $X_{(1:-1)}$ given by
\[
 u^{2} = xy(x - y)(x + z)z (y + t) t (x + y + z + t).
\]
The map
\[
(x,y,z,t,u)\mapsto (y,x,z,t,z,-iu)
\]
defines an automorphism $\Phi$ of $X_{(1:-1)}$ which acts as multiplication by (-1) on $H^{1}\mathcal T_{X_{(1:-1)}}$ and satisfies $\Phi^{*}\omega_{X_{(1:-1)}} = i \omega_{X_{(1:-1)}}$.	The action of Complex Multiplication $\Phi$ decomposes the restriction of the Galois action on $H^{3}_{\text{\'et}}(X,\mathbb Q_{l})$ to the Galois group
$\operatorname{Gal}(\bar{\mathbb Q}/\mathbb Q[i])$ in the case of a prime $p$ split in the field $\mathbb Q[i]$
or $\operatorname{Gal}(\bar{\mathbb Q}/\mathbb Q[\sqrt{p}])$ for an innert prime $p$.
Consequently the Frobenius polynomial

\[F_{p} := \det(1-t\cdot \operatorname{Frob}^{*}_{p}|H^{3}_{\text{\'et}}(X,\mathbb Q_{l}))\]
decomposes over an appropriate quadratic field.

Counting points over $\mathbb F_{p^{2}}$ we derived Frobenius polynomials for primes $p<100$

\long\def\ppp#1||{\parbox{13cm}{\rule[4mm]{0mm}{1pt}$#1$\rule[-2mm]{0mm}{1pt}}}
\bgroup
\tiny
\begin{longtable}[t]{|r|r|r|L|}
	\hline $p$&$a_{p}$&$a_{p^{2}}$&F_{p}\\\hline
	\endhead
	\hline 2 & 0 & -6 & {X}^{4}+3\,{X}^{2}+64\\
	\hline 3 & 0 & -12 &\ppp {X}^{4}+6\,{X}^{2}+729\\
	({X}^{2}+4\,\sqrt {3}X+27)\times ({X}^{2}-4\,\sqrt {3}X+27)|| \\
	\hline 5 & -4 & 276 &\ppp {X}^{4}+4\,{X}^{3}-130\,{X}^{2}+500\,X+15625\\
	({X}^{2}-4\,iX+2\,X-75-100\,i)\times ({X}^{2}+4\,iX+2\,X-75+100\,i)|| \\
	\hline 7 & 0 & -476 &\ppp {X}^{4}+238\,{X}^{2}+117649\\
	({X}^{2}-8\,\sqrt {7}X+343)\times ({X}^{2}+8\,\sqrt {7}X+343)|| \\
	\hline 11 & 0 & -4972 &\ppp {X}^{4}+2486\,{X}^{2}+1771561\\
	({X}^{2}-4\,\sqrt {11}X+1331)\times ({X}^{2}+4\,\sqrt {11}X+1331)|| \\
	\hline 13 & -84 & -1420 &\ppp {X}^{4}+84\,{X}^{3}+4238\,{X}^{2}+184548\,X+4826809\\
	({X}^{2}+28\,iX+42\,X+845+2028\,i)\times ({X}^{2}-28\,iX+42\,X+845-2028\,i)|| \\
	\hline 17 & 36 & 7620 &\ppp {X}^{4}-36\,{X}^{3}-3162\,{X}^{2}-176868\,X+24137569\\
	({X}^{2}-72\,iX-18\,X-4335+2312\,i)\times ({X}^{2}+72\,iX-18\,X-4335-2312\,i)|| \\
	\hline 19 & 0 & -21964 &\ppp {X}^{4}+10982\,{X}^{2}+47045881\\
	({X}^{2}+12\,\sqrt {19}X+6859)\times ({X}^{2}-12\,\sqrt {19}X+6859)|| \\
	\hline 23 & 0 & 24932 &\ppp {X}^{4}-12466\,{X}^{2}+148035889\\
	({X}^{2}+40\,\sqrt {23}X+12167)\times ({X}^{2}-40\,\sqrt {23}X+12167)|| \\
	\hline 29 & 140 & -62412 &\ppp {X}^{4}-140\,{X}^{3}+41006\,{X}^{2}-3414460\,X+594823321\\
	({X}^{2}-28\,iX-70\,X+17661+16820\,i)\times ({X}^{2}+28\,iX-70\,X+17661-16820\,i)|| \\
	\hline 31 & 0 & -55676 &\ppp {X}^{4}+27838\,{X}^{2}+887503681\\
	({X}^{2}+32\,\sqrt {31}X+29791)\times ({X}^{2}-32\,\sqrt {31}X+29791)|| \\
	\hline 37 & 60 & 128660 &\ppp {X}^{4}-60\,{X}^{3}-62530\,{X}^{2}-3039180\,X+2565726409\\
	({X}^{2}-180\,iX-30\,X-47915+16428\,i)\times ({X}^{2}+180\,iX-30\,X-47915-16428\,i)|| \\
	\hline 41 & -140 & -56988 &\ppp {X}^{4}+140\,{X}^{3}+38294\,{X}^{2}+9648940\,X+4750104241\\
	({X}^{2}+56\,iX+70\,X+15129+67240\,i)\times ({X}^{2}-56\,iX+70\,X+15129-67240\,i)|| \\
	\hline 43 & 0 & -250604 &\ppp {X}^{4}+125302\,{X}^{2}+6321363049\\
	({X}^{2}+28\,\sqrt {43}X+79507)\times ({X}^{2}-28\,\sqrt {43}X+79507)|| \\
	\hline 47 & 0 & -391228 &\ppp {X}^{4}+195614\,{X}^{2}+10779215329\\
	({X}^{2}-16\,\sqrt {47}X+103823)\times ({X}^{2}+16\,\sqrt {47}X+103823)|| \\
	\hline 53 & 924 & -113580 &\ppp {X}^{4}-924\,{X}^{3}+483678\,{X}^{2}-137562348\,X+22164361129\\
	({X}^{2}+132\,iX-462\,X+126405-78652\,i)\times
	({X}^{2}-132\,iX-462\,X+126405+78652\,i)|| \\
	\hline 59 & 0 & -502444 &\ppp {X}^{4}+251222\,{X}^{2}+42180533641\\
	({X}^{2}+52\,\sqrt {59}X+205379)\times ({X}^{2}-52\,\sqrt {59}X+205379)|| \\
	\hline 61 & -820 & 15796 &\ppp {X}^{4}+820\,{X}^{3}+328302\,{X}^{2}+186124420\,X+51520374361\\
	({X}^{2}-492\,iX+410\,X-40931-223260\,i)\times ({X}^{2}+492\,iX+410\,X-40931+223260\,i)|| \\
	\hline 67 & 0 & -1200908 &\ppp {X}^{4}+600454\,{X}^{2}+90458382169\\
	({X}^{2}-4\,\sqrt {67}X+300763)\times ({X}^{2}+4\,\sqrt {67}X+300763)|| \\
	\hline 71 & 0 & -986332 &\ppp {X}^{4}+493166\,{X}^{2}+128100283921\\
	({X}^{2}-56\,\sqrt {71}X+357911)\times ({X}^{2}+56\,\sqrt {71}X+357911)|| \\
	\hline 73 & -396 & 693220 &\ppp {X}^{4}+396\,{X}^{3}-268202\,{X}^{2}+154050732\,X+151334226289\\
	({X}^{2}-528\,iX+198\,X-293095-255792\,i)\times ({X}^{2}+528\,iX+198\,X-293095+255792\,i)|| \\
	\hline 79 & 0 & -1931708 &\ppp {X}^{4}+965854\,{X}^{2}+243087455521\\
	({X}^{2}-16\,\sqrt {79}X+493039)\times ({X}^{2}+16\,\sqrt {79}X+493039)|| \\
	\hline 83 & 0 & 966452 &\ppp {X}^{4}-483226\,{X}^{2}+326940373369\\
	({X}^{2}-140\,\sqrt {83}X+571787)\times ({X}^{2}+140\,\sqrt {83}X+571787)|| \\
	\hline 89 & -300 & 1165476 &\ppp {X}^{4}+300\,{X}^{3}-537738\,{X}^{2}+211490700\,X+496981290961\\
	({X}^{2}+240\,iX+150\,X-308919+633680\,i)\times ({X}^{2}-240\,iX+150\,X-308919-633680\,i)|| \\
	\hline 97 & -252 & -2420860 &\ppp {X}^{4}+252\,{X}^{3}+1242182\,{X}^{2}+229993596\,X+832972004929\\
	({X}^{2}-56\,iX+126\,X+611585-677448\,i)\times ({X}^{2}+56\,iX+126\,X+611585+677448\,i)|| \\
	\hline
\end{longtable}
\egroup

\bigskip

Frobenius polynomials for $X_{(1:-1)}$ are tensor products of the characteristic polynomials $\chi(f_{2})$ and $\chi(f_{3})$ for the weight 2 modular form of level 32 (32A1, 32.2.a.a in \cite{LMFDB}) with CM by the Dirichlet character $\chi_{4}(3,\bullet) = \left(\frac{-1}{\bullet}\right)$
\[f_{2}:=q - 2q^5 - 3q^9 + 6q^{13} + 2q^{17} - q^{25} - 10q^{29} - 2q^{37} + 10q^{41} + 6q^{45} - 7q^{49}+ 14q^{53}- 10q^{61}- 12q^{65} + o(q^{73})\]
and the modular form of weight 3 level 32 with coefficients in $\mathbb Z[4i]$ (32.3.c.a in \cite{LMFDB})
\begin{eqnarray*}
	f_{3}=q+4\,i{q}^{3}+2\,{q}^{5}-8\,i{q}^{7}-7\,{q}^{9}-4\,i{q}^{11}-14\,{q}^{13}+
	8\,i{q}^{15}+18\,{q}^{17}-12\,i{q}^{19}+32\,{q}^{21}+ o \left( {q}^{21} \right)
\end{eqnarray*}
\begin{longtable}{L|L|Lp{1.5cm}L|L|L}
	p&\chi_{p}(f_{2})&\chi_{p}(f_{3})&&		p&\chi_{p}(f_{2})&\chi_{p}(f_{3})\\
	\cline{1-3}\cline{5-7}\endhead
	3& X^{2}+3 & X^{2} + 4 i X - 9&&	43& X^{2}+43 & X^{2}+28 i X -1849 \\
	5& X^{2}+2 X +5 & X^{2}-2 X +25 &&47& X^{2}+47 & X^{2}+16 i X -2209 \\
	7& X^{2}+7 & X^{2} - 8 i X - 49 &&53& X^{2}-14 X +53 & X^{2}-66 X +2809 \\
	11& X^{2}+11 & X^{2} - 4 i X - 121 &&59& X^{2}+59 & X^{2}-52 i X -3481 \\
	13& X^{2}-6 X +13 & X^{2}+14 X +169 &&61& X^{2}+10 X +61 & X^{2}-82 X +3721 \\
	17& X^{2}-2 X +17 & X^{2}-18 X +289 &&
67& X^{2}+67 & X^{2}+4 i X - 4489 \\
	19& X^{2}+19 & X^{2} - 12 i X - 361 &&
71& X^{2}+71 & X^{2}+56 i X - 5041 \\
	23& X^{2}+23 & X^{2} + 40 i X -529 &&73& X^{2}+6 X +73 & X^{2}-66 X +5329 \\
	29& X^{2}+10 X +29 & X^{2}+14 X +841
&&79& X^{2}+79 & X^{2}-16 i X - 6241 \\
	31& X^{2}+31 & X^{2} - 32 i X +961
&&83& X^{2}+83 & X^{2}-140 i X - 6889 \\
	37& X^{2}+2 X +37 & X^{2}+30 X +1369
&&89& X^{2}-10 X +89 & X^{2}+30 X +7921 \\
	41& X^{2}-10 X +41 & X^{2}+14 X +1681
&&97& X^{2}-18 X +97 & X^{2}+14 X +9409 \\

\end{longtable}

Characteristic polynomial of the Galois action on $H^{3}_{\text{\'et}}(X)$ equals the tensor product of the characteristic polynomials of the Galois actions defined by $f_{2}$ and $f_{3}$
\[F_{p} = \chi_{p}(f_{2}) \otimes \chi_{p}(f_{3})\]
In fact, the form $f_{3}$ is defined only up to complex conjugate and the complex conjugate $\bar f_{3}$ is the quadratic twist of $f_{3}$ by $-1$
\[\bar f_{3} = f_{3} \otimes \chi_{4}(3,\bullet)\]
and consequently the tensor product $f_{2}\otimes f_{3}$ is equal to its complex conjugate, i.e. it has integral coefficients.

As the Complex Multiplication decomposes the Galois representation on $H^{3}_{\text{\'et}}(X_{(1:-1)})$ into the $\pm i$-eigenspaces of dimension 2, we believe that the modularity in this case can be proved using the approach presented in \cite{CScvS}.
The Galois representation for the modular form $f_{3}$ has a motivic realization (\cite{Scholl}), hence this example reminds the Borcea-Voisin Calabi--Yau.

We have similar numerical evidences of modularity for Arr. No. 155 at $A_{0}=1, A_{1}=-1$ with the equation
\[u^{2} = xy(x + y)z(x + z - t)t(x - y + 3z - t)(x + y + z + t)\]

\bgroup
\tiny
\begin{longtable}[t]{|r|r|r|L|}
	\hline $p$&$a_{p}$&$a_{p^{2}}$&F_{p}\\\endhead
	\hline 2 & 0 & -34 &\ppp {X}^{4}+17\,{X}^{2}+64\\
	({X}^{2}-iX+8)\times ({X}^{2}+iX+8)|| \\
	\hline 3 & 0 & -54 &\ppp {X}^{4}+27\,{X}^{2}+729\\
	(X^{2}+3\,\sqrt {3}X+27)\times (X^{2}-3\sqrt {3}X+27)|| \\
	\hline 5 & -16 & -204 &\ppp {X}^{4}+16\,{X}^{3}+230\,{X}^{2}+2000\,X+15625\\
	({X}^{2}-4\,iX+8\,X+75-100\,i)\times ({X}^{2}+4\,iX+8\,X+75+100\,i)|| \\
	\hline 7 & 0 & -1148 &\ppp {X}^{4}+574\,{X}^{2}+117649\\
	({X}^{2}+4\,\sqrt {7}X+343)\times ({X}^{2}-4\,\sqrt {7}X+343)|| \\
	\hline 11 & 0 & 308 &\ppp {X}^{4}-154\,{X}^{2}+1771561\\
	({X}^{2}-16\,\sqrt {11}X+1331)\times ({X}^{2}+16\,\sqrt {11}X+1331)|| \\
	\hline 13 & 12 & -3340 &\ppp {X}^{4}-12\,{X}^{3}+1742\,{X}^{2}-26364\,X+4826809\\
	({X}^{2}+4\,iX-6\,X+845-2028\,i)\times ({X}^{2}-4\,iX-6\,X+845+2028\,i)|| \\
	\hline 17 & -192 & -60 &\ppp {X}^{4}+192\,{X}^{3}+18462\,{X}^{2}+943296\,X+24137569\\
	({X}^{2}+24\,iX+96\,X+4335+2312\,i)\times ({X}^{2}-24\,iX+96\,X+4335-2312\,i)|| \\
	\hline 19 & 0 & -5548 &\ppp {X}^{4}+2774\,{X}^{2}+47045881\\
	({X}^{2}-24\,\sqrt {19}X+6859)\times ({X}^{2}+24\,\sqrt {19}X+6859)|| \\
	\hline 23 & 0 & -1564 &\ppp {X}^{4}+782\,{X}^{2}+148035889\\
	({X}^{2}+32\,\sqrt {23}X+12167)\times ({X}^{2}-32\,\sqrt {23}X+12167)|| \\
	\hline 29 & 176 & -10668 &\ppp {X}^{4}-176\,{X}^{3}+20822\,{X}^{2}-4292464\,X+594823321\\
	({X}^{2}+220\,iX-88\,X-17661-16820\,i)\times ({X}^{2}-220\,iX-88\,X-17661+16820\,i)|| \\
	\hline 31 & 0 & 48484 &\ppp {X}^{4}-24242\,{X}^{2}+887503681\\
	({X}^{2}-52\,\sqrt {31}X+29791)\times ({X}^{2}+52\,\sqrt {31}X+29791)|| \\
	\hline 37 & -36 & 168980 &\ppp {X}^{4}+36\,{X}^{3}-83842\,{X}^{2}+1823508\,X+2565726409\\
	({X}^{2}+108\,iX+18\,X-47915+16428\,i)\times ({X}^{2}-108\,iX+18\,X-47915-16428\,i)|| \\
	\hline 41 & 64 & 59364 &\ppp {X}^{4}-64\,{X}^{3}-27634\,{X}^{2}-4410944\,X+4750104241\\
	({X}^{2}-40\,iX-32\,X-15129+67240\,i)\times ({X}^{2}+40\,iX-32\,X-15129-67240\,i)|| \\
	\hline 43 & 0 & -48332 &\ppp {X}^{4}+24166\,{X}^{2}+6321363049\\
	({X}^{2}-56\,\sqrt {43}X+79507)\times ({X}^{2}+56\,\sqrt {43}X+79507)|| \\
	\hline 47 & 0 & -319036 &\ppp {X}^{4}+159518\,{X}^{2}+10779215329\\
	({X}^{2}+32\,\sqrt {47}X+103823)\times ({X}^{2}-32\,\sqrt {47}X+103823)|| \\
	\hline 53 & -144 & 388980 &\ppp {X}^{4}+144\,{X}^{3}-184122\,{X}^{2}+21438288\,X+22164361129\\
({X}^{2}-252\,iX+72\,X-126405-78652\,i)\times ({X}^{2}+252\,iX+72\,X-126405+78652\,i)|| \\
	\hline 59 & 0 & -700684 &\ppp {X}^{4}+350342\,{X}^{2}+42180533641\\
({X}^{2}+32\,\sqrt {59}X+205379)\times ({X}^{2}-32\,\sqrt {59}X+205379)|| \\
	\hline 61 & 620 & 79156 &\ppp {X}^{4}-620\,{X}^{3}+152622\,{X}^{2}-140728220\,X+51520374361\\
({X}^{2}+372\,iX-310\,X-40931-223260\,i)\times ({X}^{2}-372\,iX-310\,X-40931+223260\,i)|| \\
	\hline 67 & 0 & -345452 &\ppp {X}^{4}+172726\,{X}^{2}+90458382169\\
	({X}^{2}-80\,\sqrt {67}X+300763)\times ({X}^{2}+80\,\sqrt {67}X+300763)|| \\
	\hline 71 & 0 & 894884 &\ppp {X}^{4}-447442\,{X}^{2}+128100283921\\
	({X}^{2}-128\,\sqrt {71}X+357911)\times ({X}^{2}+128\,\sqrt {71}X+357911)|| \\
	\hline 73 & -396 & 693220 &\ppp {X}^{4}+396\,{X}^{3}-268202\,{X}^{2}+154050732\,X+151334226289\\
	({X}^{2}+528\,iX+198\,X-293095+255792\,i)\times ({X}^{2}-528\,iX+198\,X-293095-255792\,i)|| \\
	\hline 79 & 0 & -1908956 &\ppp {X}^{4}+954478\,{X}^{2}+243087455521\\
	({X}^{2}-20\,\sqrt {79}X+493039)\times ({X}^{2}+20\,\sqrt {79}X+493039)|| \\
	\hline 83 & 0 & -2244652 &\ppp {X}^{4}+1122326\,{X}^{2}+326940373369\\
	({X}^{2}-16\,\sqrt {83}X+571787)\times ({X}^{2}+16\,\sqrt {83}X+571787)|| \\
	\hline 89 & 2304 & 381732 &\ppp {X}^{4}-2304\,{X}^{3}+2463342\,{X}^{2}-1624248576\,X+496981290961\\
	({X}^{2}+720\,iX-1152\,X+308919-633680\,i)\times ({X}^{2}-720\,iX-1152\,X+308919+633680\,i)|| \\
	\hline 97 & -1692 & -1297660 &\ppp {X}^{4}+1692\,{X}^{3}+2080262\,{X}^{2}+1544242716\,X+832972004929\\
	({X}^{2}-376\,iX+846\,X+611585-677448\,i)\times ({X}^{2}+376\,iX+846\,X+611585+677448\,i)|| \\
	\hline
\end{longtable}
\egroup

\medskip

In this case the Frobenius polynomials for primes $2\le p\le97$ are the tensor products of Frobenius polynomials of the modular form of weight 2 and level 288 (288.2.a.a)
\[f_{2} = q - 4  q^5 - 6  q^{13} - 8  q^{17} + 11  q^{25} + 4  q^{29} - 2  q^{37} + 8  q^{41} - 7  q^{49} + 4  q^{53} - 10  q^{61} + 24  q^{65} + 6  q^{73} + 32  q^{85} - 16  q^{89}  + o(q^{96})\]
and weight 3 form of level (288.3.g.c)
\[f_{3} = q + 4  q^5 + 4i  q^7 + 16i  q^{11} - 2  q^{13} + 24  q^{17} - 24i  q^{19} + 32i  q^{23} - 9  q^{25} + 44  q^{29} + 52i  q^{31} + 16i  q^{35} + 18  q^{37} + 8  q^{41} +o(q^{42})\]

\begin{longtable}{L|L|Lp{1.5cm}L|L|L}

	p&\chi_{p}(f_{2})&\chi_{p}(f_{3})&&		p&\chi_{p}(f_{2})&\chi_{p}(f_{3})\\
	\cline{1-3}\cline{5-7}\endhead
	3& X^{2}+3 & X^{2} - 9 &&
	43& X^{2}+43 & X^{2} + 56 i X -1849 \\
	5& X^{2}+4 X +5 & X^{2}-4 X +25 &&	47& X^{2}+47 & X^{2}-32 i X -2209 \\
	7& X^{2}+7 & X^{2}-4 i X - 49 &&53& X^{2}-4 X +53 & X^{2}+36 X +2809 \\
	11& X^{2}+11 & X^{2}-16 i X - 121 &&59& X^{2}+59 & X^{2}+32 i X -3481 \\
	13& X^{2}+6 X +13 & X^{2}+2 X +169 &&61& X^{2}+10 X +61 & X^{2}+62 X +3721\\
	17& X^{2}+2 X +17 & X^{2}-24 X +289 &&67& X^{2}+67 & X^{2}+80 i X - 4489\\
	19& X^{2}+19 & X^{2}+24 i X - 361 &&71& X^{2}+71 & X^{2}+128 i X - 5041\\
	23& X^{2}+23 & X^{2}-32 i X -529 &&73& X^{2}-6 X +73 & X^{2}+66 X +5329\\
	29& X^{2}-4 X +29 & X^{2}-44 X +841&&79& X^{2}+79 & X^{2}+20 i X - 6241 \\
	31& X^{2}+31 & X^{2} - 52 i X +961 &&83& X^{2}+83 & X^{2}+16 i X - 6889 \\
	37& X^{2}+2 X +37 & X^{2} -18 X +1369 &&89& X^{2}+16 X +89 & X^{2} + 144 X +7921\\
	41& X^{2}-8 X +41 & X^{2} -8 X +1681 &&97& X^{2}+18 X +97 & X^{2} - 94 X +9409 \\
\end{longtable}

\subsection{Calabi--Yau threefold over a number field}
The one parameter family of double octics for Arrangement No. 148, defined by the singular double cover
\begin{multline*}
	u^{2}=\{(x,y,z,t,u)\in\PP(1,1,1,1,4):
	u^{2} = xy(A_{0}x + A_{1}y)z(A_{0}x + A_{2}z + A_{0}t)t\\
	\times((-A_{0} + A_{1})x + A_{1}z + A_{3}t)(x + y + z + t)\}.
\end{multline*}
It has eight invariant permutations (the dihedral group $D_{4}$). This three-dimensional family contains two distinguished points for $A=(2i,i+1,i-1,1)$ and $A=(-2i,-i+1,-i-1,1)$.

Projective transformation
\begin{multline*}
\begin{array}{c}
\Phi:\left(	x,y,z,t\right)\longmapsto
\biggl( A_{3}(A_{1}-A_{3})x, (A_{1} - A_{3}) (A_{0} - A_{1} + A_{3}) y, - A_{3} (A_{0} - A_{1} + A_{3}) z,\\
(A_{0} A_{1} - A_{0} A_{3} - A_{1}^{2}  + A_{1} A_{3}) x + A_{1} (A_{0} - A_{1} + A_{3}) z + (A_{1} - A_{3}) (A_{0} - A_{1} + A_{3}) t
\biggr)
\end{array}
\end{multline*}
defines isomorphism between the fiber over $A=(A_{0},A_{1}.A_{2},A_{3})$ and quadratic twist by
\[
(A_{0} - A_{1} + A_{3})^{5}  (A_{3} - A_{1})^{3}  A_{3}^{2}
\]
of the fiber at
\begin{multline*}
	\phi(A_{0},A_{1},A_{2},A_{3}) =
	\biggl((A_{1} - A_{3}) A_{0} A_{3}, A_{1} (A_{0} - A_{1} + A_{3}) (A_{1} - A_{3}),\\ A_{3} (A_{0} A_{1} - A_{2} A_{3}),  (A_{0} - A_{1} + A_{3}) (A_{1} - A_{3})^{3}\biggr)
\end{multline*}
In particular, it gives an isomorphism over $\QQ[i]$ between the quadratic twist $X'$ by $1+i$ of the geometrically distinguished fiber at $(2i,1+i,-1+i,1)$ and its complex conjugate. Consequently, Calabi--Yau manifold $X'$ is defined over $\mathbb Q$.
For any prime $p<100$ with $p\equiv 1\pmod4$ we have directly computed the number of points of reduction of singular double covering $Y'$ of $\mathbb P^{3}$ defining $X'$ modulo $p$
\[
\begin{array}{c|*{11}{|c}}
	p&5&13&17&29&37&41&53&61&73&89&97\rule[-2mm]{0mm}{7mm}\\\hline
	\#(Y'_{p})&142&2382&5354&25838&51182& 71098&152110&230222& 391626& 713418& 923690\rule[-2mm]{0mm}{7mm}
\end{array}
\]

For primes $p\equiv 3\pmod 4$, $\sqrt{-1}\not\in\mathbb F_{p}$, so we can count points only over $\mathbb F_{p^{2}}$. Using the method of \cite{czarnecki} we have computed for primes $p<100$

\[
\begin{array}{|c|*{5}{|c}}\rule[-2mm]{0mm}{7mm}
	p&3&7&11&19&23\\\hline\rule[-2mm]{0mm}{7mm}
	\#(Y'_{p^{2}})&906&119994&1795082&47200138&148357562 \\\hline\rule[-2mm]{0mm}{7mm}
	 p&31&43&47&59&67\\\cline{1-6}\rule[-2mm]{0mm}{7mm}
	 \#(Y'_{p^{2}})&888648186&6325127562&10784332794&42192140042&90479547402\\\hline\rule[-2mm]{0mm}{7mm}
	 p&71&79&\multicolumn{1}{c|}{83}\\\cline{1-4}\rule[-2mm]{0mm}{7mm}
	 \#(Y'_{p^{2}})&128125886138&243128591866&\multicolumn{1}{c|}{326991195402}
\end{array}
\]

These numbers are congruent to $1-a_{p}$ modulo $p$ (resp. $1-a_{p}^{2}$ modulo $p^{2}$), where $a_{p}$ are coefficients of the modular form $128/4$ (the form 128.4.a.a in \cite{LMFDB})
\[f_{4}(q)=q - 2  q^3 - 6  q^5 + 20  q^7 - 23  q^9 - 14  q^{11} - 54  q^{13} + 12  q^{15} - 66  q^{17} - 162  q^{19} - 40  q^{21} + 172  q^{23} + o(q^{24}).
\]
Consequently we believe that the $L$-function of this modular form is a factor of $L(X')$ and a crepant resolution of the quotient of $X'$ by a group of its automorphisms is a rigid Calabi--Yau threefold of level $128=2^{8}$ (which by the Serre bound is the highest possible power 2 factor of the level of a modular form of a rigid Calabi--Yau threefold \cite{Serre}).

\subsection{Geometrically distinguished one-parameter sub-families of double octic Calabi--Yau manifolds.}
For a distinguished family we can have three types of behaviors: it may happen that the differential operator computed for a family fails to be a CY(3)-operators, it can be an operator already present in the database \cite{CYDB}, or it is a new CY(3)-operator. The following examples show, that all three possibilities actually appears.
\begin{exmp}
	Invariant permutation $(1,2)(7,8)$ of Arr. No. 383 defines the birational transformation
	\[ \mathcal X \lra \mathcal X, \]
	where $\phi([A_{0}:A_{1}:A_{2}:A_{3}]) = [A_{1}A_{3}:A_{0}A_{3}:A_{0}A_{2}:A_{0}A_{1}]$.
	The fixed locus $\operatorname{Fix}(\phi)$ consists of 7 components, one of them is the line $\{A_{0}+A_{1}=A_{0}+A_{3}=0\}$ and defines a geometrically distinguished family
	\(\mathcal X_{[A_{0}:-A_{0}:A_{2}:-A_{0}]} \) of double octic Calabi--Yau 3-folds
	 for the octic arrangements
	 \[xyz(A_{0}x-A_{0}y+A_{2}z)t(A_{0}x-A_{0}y-A_{2}t)(x+y-z-t)(x+y+z+t)=0.\]
	 The Picard-Fuchs operator of this family equals
	 \[\Theta^{4}-t^{2}(\Theta+1)^{4}\]
	 with the Riemann-Symbol
	 \[\left\{\begin{array}{*{4}c}
	 	0& 1& -1& \infty \\
	 	\hline0 & 0 & 0 & 1 \\
	 	0 & 1 & 1 & 1 \\
	 	0 & 1 & 1 & 1 \\
	 	0 & 2 & 2 & 1 \\
	 \end{array}\right\}.\]
	 It is a pullback by the map $t\mapsto \frac{1}{256}t^{2}$ of the hypergeometric operator
	 \[\Theta^4-2^{4} x\left((2\Theta+1)^4\right)\]
	 which is the operator 1.3 in the online database of CY(3)-operators \cite{CYDB} and
	 AESZ 3 in the seminal paper \cite{AESZ}. This is the Picard-Fuchs operator of the mirror of complete intersections $V_{2,2,2,2}$ of four quadrics in $\PP^{7}$.
\end{exmp}
\begin{exmp}
	Two-dimensional family of double octic Calabi--Yau threefolds defined by the arrangement No. 280 has an automorphism $\Phi(x,y,z,t) = (y,x,-z-t,t)$, with $\phi(A_{0}:A_{1}:A_{2}) = (A_{1}A_{2}:A_{0}A_{1}:A_{0}A_{2})$ (cf. Example \ref{ex280}).
	The fixed-locus of $\phi$ consists of an isolated point $(1:0:0)$ and a conic $A_{0}^{2}=A_{1}A_{2}$. Consequently we get a distinguished subfamily
	\[xyz(sx + y - s^{2}z) (z + t) (sx + s^{2}y + s^{2}z + s^{2}t) (x + t) (y + )t\]
	  Using the Magma package by P. Lairez \cite{Lairez} we have computed an order 4 ordinary differential operator annihilating invariant period integrals
	\begin{center}
		\(\displaystyle {\Theta}^{4}\)
		\mbox{\(\displaystyle\; - \; 2  \cdot  7 t(1488\,{\Theta}^{4}+1452\,{\Theta}^{3}+1125\,{\Theta}^{2}+399\,\Theta+56)\)}
		\mbox{\(\displaystyle\; + \; 2 ^{2} 7 t^{2}(766392\,{\Theta}^{4}+1184952\,{\Theta}^{3}+1010797\,{\Theta}^{2}+454076\,\Theta+83776)\)}
		\mbox{\(\displaystyle\; - \; 2 ^{4} 7 t^{3}(12943616\,{\Theta}^{4}+28354200\,{\Theta}^{3}+30710572\,{\Theta}^{2}+16054731\,\Theta+3215254)\)}
		\mbox{\(\displaystyle\; + \; 2 ^{6} 7 t^{4}(105973188\,{\Theta}^{4}+333359304\,{\Theta}^{3}+436182381\,{\Theta}^{2}+261265857\,\Theta+57189166)\)}
		\mbox{\(\displaystyle\; - \; 2 ^{11} 127  \cdot  7 t^{5}(\Theta+1)(390972\,{\Theta}^{3}+1350660\,{\Theta}^{2}+1486781\,\Theta+460439)\)}
		\mbox{\(\displaystyle\; + \; 2 ^{14} 23 ^{2} 127 ^{2} 7 t^{6}(\Theta+1)(\Theta+2)(2\,\Theta+1)(2\,\Theta+5)\)}
	\end{center}
with the following Riemann Symbol
	\[\left\{\begin{array}{*{9}c}
	0& {\frac{1}{64}}& -\frac1{32}& -{\frac{1}{64}}& -{\frac{1}{128}}+{\frac {\sqrt {5}}{128}}& -{\frac{1}{128}}-{\frac {\sqrt {5}}{128}}& -{\frac{13}{384}}+{\frac {\sqrt {145}}{384}}& -{\frac{13}{384}}-{\frac {\sqrt {145}}{384}}& \infty \\[1mm]
	\hline0 & 0 & 0 & 0 & 0 & 0 & 0 & 0 & 1 \\
	0 & 1 & 1 & 1 & 1 & 1 & 1 & 1 & 3/2 \\
	0 & 1 & 1 & 1 & 1 & 1 & 3 & 3 & 3/2 \\
	0 & 2 & 2 & 2 & 2 & 2 & 4 & 4 & 2 \\
	\end{array}\right\}\]
	and instanton numbers equal
	\[n_{1}=176, n_{2}=-8330, n_{3}=720112, n_{4}=-81101933, n_{5}=10606760080,\dots\]
	There is no Calabi--Yau operator with these instanton numbers in the online database of CY(3) operators \cite{CYDB}.
\end{exmp}

We found more new examples CY(3) operators, we list here two operators with three non-isomorphic MUM points. These examples are specially interesting, because previously only two CY(3) operators with more than two MUM points were known (cf. \cite{vS}).

\begin{exmp}\leavevmode\samepage
Two-dimensional family of double octic Calabi--Yau threefolds defined by arrangement No. 283 has an automorphism
\begin{eqnarray*}
\Phi:&(x,y,z,t)&\mapsto((A_{0}+A_{2})x+A_{0}y+A_{0}z, (A_{0}+A_{2})t, -(A_{0}+A_{2})(x+y+z+t), A_{2}y)\\
\phi:&(A_{0},A_{1},A_{2})&\mapsto(A_{0}A_{1}, A_{0}A_{2}A_{2}^2, A_{2}A_{1})
\end{eqnarray*}
The fixed locus of the transformation consists of four isolated points and the conic
\[\{A_{0}A_{2}-A_{1}^2+A_{2}^2=0\}\subset\PP^{2}.\]
Substituting  the following parametrization
\[A_{0}=s_{0}^{2}-s_{1}^{2}, A_{1}=s_{0}s_{1}, A_{2}=s_{1}^{2}\]
we get a one-parameter distinguished subfamily with the Picard-Fuchs operator equal
\begin{center}
	\(\displaystyle {\Theta}^{4}\)
	\mbox{\(\displaystyle\; + \; 2 ^{2}t(20\,{\Theta}^{4}-32\,{\Theta}^{3}-27\,{\Theta}^{2}-11\,\Theta-2)\)}
	\mbox{\(\displaystyle\; + \; 2 ^{6} 3 t^{2}(28\,{\Theta}^{4}-560\,{\Theta}^{3}-413\,{\Theta}^{2}-273\,\Theta-96)\)}
	\mbox{\(\displaystyle\; - \; 2 ^{11} 3 t^{3}(128\,{\Theta}^{4}+540\,{\Theta}^{3}+293\,{\Theta}^{2}+3\,\Theta-102)\)}
	\mbox{\(\displaystyle\; - \; 2 ^{15} 3 t^{4}(514\,{\Theta}^{4}-292\,{\Theta}^{3}-4799\,{\Theta}^{2}-7305\,\Theta-4041)\)}
	\mbox{\(\displaystyle\; + \; 2 ^{18} 3 t^{5}(596\,{\Theta}^{4}+11624\,{\Theta}^{3}+31979\,{\Theta}^{2}+37719\,\Theta+17482)\)}
	\mbox{\(\displaystyle\; + \; 2 ^{22} 3 t^{6}(1820\,{\Theta}^{4}+12456\,{\Theta}^{3}+18139\,{\Theta}^{2}+5283\,\Theta-6554)\)}
	\mbox{\(\displaystyle\; + \; 2 ^{28} 3 t^{7}(216\,{\Theta}^{4}-1044\,{\Theta}^{3}-10541\,{\Theta}^{2}-21863\,\Theta-15404)\)}
	\mbox{\(\displaystyle\; - \; 2 ^{32} 3 t^{8}(177\,{\Theta}^{4}+3804\,{\Theta}^{3}+15415\,{\Theta}^{2}+24365\,\Theta+14305)\)}
	\mbox{\(\displaystyle\; - \; 2 ^{34} 3 t^{9}(1188\,{\Theta}^{4}+10512\,{\Theta}^{3}+20541\,{\Theta}^{2}+6453\,\Theta-10810)\)}
	\mbox{\(\displaystyle\; - \; 2 ^{38} 3 t^{10}(532\,{\Theta}^{4}+512\,{\Theta}^{3}-16543\,{\Theta}^{2}-51539\,\Theta-44212)\)}
	\mbox{\(\displaystyle\; + \; 2 ^{43} 3 t^{11}(80\,{\Theta}^{4}+2492\,{\Theta}^{3}+13305\,{\Theta}^{2}+26375\,\Theta+18134)\)}
	\mbox{\(\displaystyle\; + \; 2 ^{47} 3 t^{12}(146\,{\Theta}^{4}+2004\,{\Theta}^{3}+8639\,{\Theta}^{2}+15393\,\Theta+9881)\)}
	\mbox{\(\displaystyle\; + \; 2 ^{50} 3 t^{13}(\Theta+2)(52\,{\Theta}^{3}+432\,{\Theta}^{2}+1175\,\Theta+1053)\)}\\
	\mbox{\(\displaystyle\; + \; 2 ^{54}t^{14}(\Theta+2)(\Theta+3)(2\,\Theta+5)^{2}\)}
\end{center}
\[\left\{\begin{array}{*{9}c}
	0& 1/16& -1/16& \alpha_{1}& \alpha_{2}& \beta_{1} & \beta_{2} & \beta_{3} & \infty \\
	\hline0 & 0 & 0 & 0 & 0 & 0 & 0 & 0 & 2 \\
	0 & 0 & 0 & 1 & 1 & 1 & 1 & 1 & 5/2 \\
	0 & 0 & 0 & 1 & 1 & 3 & 3 & 3 & 5/2 \\
	0 & 0 & 0 & 2 & 2 & 4 & 4 & 4 & 3 \\
\end{array}\right\}\]

$\alpha_{i}$ are roots of $t^{2}+\frac1{16}6-\frac1{256}$

$\beta_{i}$ are roots of ${t}^{3}+{\frac {5\,{t}^{2}}{48}}+{\frac {3\,t}{256}}+{\frac{1}{4096}}$

This operator has three points of Maximal Unipotetn Monodromy: $0, -1/16$ and $1/16$ with instanton numbers:

$n_{1}=12, n_{2}=-215/3, n_{3}=2444/3, n_{4}=-12858, n_{5}=717344/3, n_{6}=-4814271, \dots$,

$n_{1}=16, n_{2}=10, n_{3}=1232, n_{4}=-8637, n_{5}=408496, n_{6}=-706174, \dots$

$n_{1}=4/5, n_{2}=7/5, n_{3}=12/5, n_{4}=391/20, n_{5}=80, n_{6}=2249/5, \dots$

The map $\Phi$ acts on each element of the distinguished subfamily as an involution preserving the canonical form. Consequently, the quotient resolves to a one parameter family of Calabi--Yau threefolds with $h^{1,2}=1$ and the above differential operstor is the Picard-Fuchs operstor of constructed family. Now, the Mirror Symmetry Conjecture predicts that in the A-side we get three non-birational Calabi--Yau threefolds with isomorphic derived categories (see \cite{vS})..

\end{exmp}
\begin{exmp}\leavevmode
	Similar construction for arrangements No. 311 yields the following differential operator.
\begin{center}
	\(\displaystyle {\Theta}^{4}\)
	\mbox{\(\displaystyle\; + \; 2 ^{4}t(86\,{\Theta}^{4}+40\,{\Theta}^{3}+51\,{\Theta}^{2}+31\,\Theta+7)\)}
	\mbox{\(\displaystyle\; + \; 2 ^{9}t^{2}(1598\,{\Theta}^{4}+1784\,{\Theta}^{3}+2391\,{\Theta}^{2}+1511\,\Theta+402)\)}
	\mbox{\(\displaystyle\; + \; 2 ^{14}t^{3}(16946\,{\Theta}^{4}+32784\,{\Theta}^{3}+47889\,{\Theta}^{2}+34893\,\Theta+10879)\)}
	\mbox{\(\displaystyle\; + \; 2 ^{19}t^{4}(113936\,{\Theta}^{4}+330640\,{\Theta}^{3}+548153\,{\Theta}^{2}+458685\,\Theta+161792)\)}
	\mbox{\(\displaystyle\; + \; 2 ^{26} 3 t^{5}(42306\,{\Theta}^{4}+170832\,{\Theta}^{3}+328476\,{\Theta}^{2}+311857\,\Theta+122041)\)}
	\mbox{\(\displaystyle\; + \; 2 ^{31}t^{6}(375780\,{\Theta}^{4}+2052984\,{\Theta}^{3}+4642349\,{\Theta}^{2}+4967559\,\Theta+2135990)\)}
	\mbox{\(\displaystyle\; + \; 2 ^{39}t^{7}(86908\,{\Theta}^{4}+666692\,{\Theta}^{3}+1818436\,{\Theta}^{2}+2199564\,\Theta+1036825)\)}
	\mbox{\(\displaystyle\; + \; 2 ^{45}t^{8}(36105\,{\Theta}^{4}+522570\,{\Theta}^{3}+1862077\,{\Theta}^{2}+2606519\,\Theta+1359197)\)}
	\mbox{\(\displaystyle\; - \; 2 ^{51}t^{9}(19328\,{\Theta}^{4}-158496\,{\Theta}^{3}-1128728\,{\Theta}^{2}-1992033\,\Theta-1189419)\)}
	\mbox{\(\displaystyle\; - \; 2 ^{57}t^{10}(38052\,{\Theta}^{4}+125520\,{\Theta}^{3}-217375\,{\Theta}^{2}-837115\,\Theta-647874)\)}
	\mbox{\(\displaystyle\; - \; 2 ^{64} 3 t^{11}(3756\,{\Theta}^{4}+27828\,{\Theta}^{3}+37294\,{\Theta}^{2}-4167\,\Theta-27103)\)}
	\mbox{\(\displaystyle\; - \; 2 ^{71}t^{12}(779\,{\Theta}^{4}+18474\,{\Theta}^{3}+46343\,{\Theta}^{2}+39693\,\Theta+6854)\)}
	\mbox{\(\displaystyle\; + \; 2 ^{78}t^{13}(464\,{\Theta}^{4}-434\,{\Theta}^{3}-4838\,{\Theta}^{2}-7225\,\Theta-3051)\)}
	\mbox{\(\displaystyle\; + \; 2 ^{85}t^{14}(154\,{\Theta}^{4}+634\,{\Theta}^{3}+980\,{\Theta}^{2}+635\,\Theta+139)\)}
	\mbox{\(\displaystyle\; + \; 2 ^{92}t^{15}(20\,{\Theta}^{4}+126\,{\Theta}^{3}+308\,{\Theta}^{2}+345\,\Theta+149)\)}\\
	\mbox{\(\displaystyle\; + \; 2 ^{99}t^{16}(\Theta+2)^{4}\)}
\end{center}

\[\left\{\begin{array}{*{11}c}
	0& -1/32& -{\frac{1}{64}}& -{\frac{1}{128}}& -{\frac{3}{128}}+{\frac {\sqrt {5}}{128}}& -{\frac{3}{128}}-{\frac {\sqrt {5}}{128}}& \beta_{1} & \beta_{2} & \beta_{3} & \beta_{4} & \infty \\
	\hline0 & 0 & 0 & 0 & 0 & 0 & 0 & 0 & 0 & 0 & 2 \\
	0 & 1 & 0 & 0 & 1 & 1 & 1 & 1 & 1 & 1 & 2 \\
	0 & 1 & 0 & 1 & 1 & 1 & 3 & 3 & 3 & 3 & 2 \\
	0 & 2 & 0 & 1 & 2 & 2 & 4 & 4 & 4 & 4 & 2 \\
\end{array}\right\}\]

$\alpha_{i}$ are roots of $t^{2}+\frac3{64}t+\frac1{4096}$

$\beta_{i}$ are roots of ${t}^{4}+{\frac {{t}^{3}}{128}}-{\frac {{t}^{2}}{2048}}-{\frac {11\,t}{1048576}}-{\frac{1}{33554432}}$

This operator has three points of Maximal Unipotetn Monodromy: $0, -1/64$ and $\infty$ with instanton numbers:

$n_{1}=48, n_{2}=-2170, n_{3}=104048, n_{4}=-7767917, n_{5}=619344656,  \dots,$

$n_{1}=4/3, n_{2}=173/3, n_{3}=-1492/3, n_{4}=-4318/3, n_{5}=78944, n_{6}=-515995, \dots,$

$n_{1}=32, n_{2}=-288, n_{3}=6560, n_{4}=-188232, n_{5}=6133600, n_{6}=-214394144, \dots,$

\end{exmp}

\begin{exmp}
The differential operator corresponding to geometrically distinguished subfamily of family no. 148 and the invariant permutation $(2,3)(4,6)(5,7)$ has local exponents at $t=0$ equal $(0,1/2,1/2,2)$ and consequently it is not a Calabi--Yau operator (cf. \cite{AZ, vS}).

\end{exmp}

\section{Fibrations of double octic Calabi--Yau threefolds}
\label{sec:fibr}
If $X\lra B$ is a fibration of a Calabi--Yau threefold then every smooth fiber $X_{b}$ has trivial canonical bundle, hence it is one of the following three types: an elliptic curve, a K3 or abelian surface.
A double octic Calabi--Yau threefold is constructed as double covering of a blow-up $\tilde \PP$ of $\PP^{3}$, we shall describe fibrations of a double octic induced by fibrations of $\tilde\PP$, or linear systems on $\PP^{3}$ that lifts to fibrations on $\tilde\PP$. We shall consider linear systems on $\PP^{3}$ of the following types:
\begin{enumerate}
	\item lines through an arrangement point of multiplicity 4 or 5,
	\item lines intersecting two disjoint double or triple arrangement lines,
	\item planes through two arrangements points at multiplicity at least 4 such that every arrangement plane contains at least one of them,
	\item planes through a double or triple arrangement line,
	\item a pencil of quadrics containing four double or triple arrangement  lines.
\end{enumerate}
Blow-ups performed in the resolution of singularities separate elements of the linear systems described above, and consequently produce fibration of the singular double covering.. We can describe fibers in the fibration more explicitly. For an octic arrangement $D=D_{1}+\dots +D_{8}$ and a smooth subvariety  $W\subset\PP^{3}$, let  $D|W = n_{1}E_{1}+\dots+n_{k}E_{k}$, be the restriction of the Cartier divisor $D$ to $W$. Then the normalization of the double covering of $W$ branched along $D|W$ equals the double cover of $W$ branched along $\widetilde {D|W} = \tilde n_{1}E_{1}+\dots+\tilde n_{k}E_{k}$, where
$\tilde n_{i}=n_{i}-2\lfloor \frac{n_{i}}2\rfloor$ is the reminder of $n_{i}$ modulo 2 ($i=1,\dots,k$).

In particular, if $L\subset \PP^{3}$ is a smooth rational curve such that the restriction $D|L$ contains four points with odd multiplicity, then $L$ lifts to an elliptic curve in the double octic. Similarly, if the restriction of $D|L$
contains 2 (resp. 0) points with odd multiplicity, then $L$ lifts to a rational curve (resp. union of two rational curves).

If the restriction $D|\Pi$ of the divisor $D$ to a plane $\Pi\subset \PP^{3}$ contains exactly 6 lines with odd multiplicity and no four of this six lines intersect, then  $S$ lifts to a surface birational to a double sextic K3 surface in the double octic.

Finally, let $Q\subset \PP^{3}$ be a smooth quadric, assume that the restriction  $D|Q$ consists of four conics (pairs of intersecting lines)
with odd multiplicity. Then $\widetilde {D|Q}$ is a (4,4)-curve. Consequently, $Q$ lifts to a surface birational to a K3 surface.

In subsequent subsections we shall give formulas for the fibrations in the above five cases.

\subsection{Elliptic fibrations determined by fourfold and fivefold points}
Assume that the point $A$ belongs to four arrangement planes $P_{1},\dots,P_{4}$ from the octic arrangement $D$.The restriction $D|L$ of $D$ to a generic line $L$ containing $A$ equals $4A+Q_{1}+Q_{2}+Q_{3}+Q_{4}$ and  $Q_{i}$ are distinct points. The line $L$ lifts to an elliptic curve $E$ in the double octic.

We consider separately the cases when $A$ has multiplicity 4 and 5. Assume that $A=(0:0:0:1)$, after change of coordinates we can write the equation of the plane $P_{i}$ in the form
\[F_{i}(x,y,z)+G_{i}t\]
where $F_{i}(x,y,z)$ is a polynomial linear with respect to variables $x,y,z$ and $G_{i}$ is a constant. Reordering, if necessary, planes $P_{i}$, we can assume that $G_{1}=\dots=G_{4}=0$ and $G_{5}=0$ when $A$ is a fivefold point and $G_{5}=1$ when $A$ is fourfold.

\begin{theorem}[\mbox{\cite[Thm. 2.1]{CK-C2}}]\label{thm:ellfive}
	If $A=(0:0:0:1)$ is a fivefold point then
	the double octic Calabi--Yau threefold defined by
	\begin{equation}\label{docsing5}
		u^{2}=F_{1}F_{2}F_{3}F_{4}F_{5}
		(F_{6}+t)(F_{7}+t)(F_{8}+t)
	\end{equation}
	is birational to the elliptic fibration
	\[u^{2} = \left(F_{1}F_{2}F_{3}F_{4}F_{5} F_{6}   +t \right) \left(F_{1}F_{2}F_{3}F_{4}F_{5} F_{7}  +t \right) \left(F_{1}F_{2}F_{3}F_{4}F_{5}F_{8}  +t \right)
	\]
	with discriminant
	\[
	\Delta = 16 (F_{1}F_{2}F_{3}F_{4}F_{5})^{6} \left({F_{7}}  -{F_{8}}  \right)^{2}
	\left({F_{6}}  -{F_{8}}  \right)^{2} \left({F_{6}} -{F_{7}} \right)^{2}\]
	and the $J$-invariant
	\[J= \frac{4 \left(
		{F_{6}}^{2} + {F_{7}}^{2}  + {F_{8}}^{2}
		-{F_{6}} F_{7}
		-{F_{6}} F_{8}
		-{F_{7}} F_{8}
		\right)^{3}}
	{27 {\left({F_{7}}  -{F_{8}} \right)^{2} \left({F_{6}} -{F_{8}} \right)^{2} \left({F_{6}}  -{F_{7}} \right)^{2}}}
	\]
\end{theorem}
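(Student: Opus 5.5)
The plan is to project from the fivefold point $A=(0:0:0:1)$ and make the elliptic structure explicit. Lines through $A$ are parametrized by $(x:y:z)\in\PP^{2}$. On the line through $A$ and $(x:y:z:0)$ the forms $F_{1},\dots,F_{5}$ are constant (they do not involve $t$), and each $F_{j}+t$, $j=6,7,8$, is linear in $t$. In the affine chart, set $P:=F_{1}F_{2}F_{3}F_{4}F_{5}$. On the fibre the equation \eqref{docsing5} then reads $u^{2}=P\,(F_{6}+t)(F_{7}+t)(F_{8}+t)$, a curve of degree three in $t$. Its branch points are $t=-F_{6},-F_{7},-F_{8},\infty$, matching the four odd points $Q_{i}$ and $A$ (multiplicity $5$, hence odd) on $D|L$. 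So the fibres are genus one curves over the function field $\CC(x,y,z)$ of $\PP^{2}$, viewed in an affine chart, say $z=1$. This already gives an elliptic fibration over $\PP^{2}$ of the Calabi--Yau resolution, since the blow-up of $A$ separates the lines through $A$.

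Next we rescale to reach the stated Weierstrass-type normal form. Put $t=T/P$ and $u=U/P^{2}$. Multiplying through by $P^{4}$ gives
\[U^{2}=P^{4}\cdot P\cdot P^{-3}(PF_{6}+T)(PF_{7}+T)(PF_{8}+T)=P^{2}(PF_{6}+T)(PF_{7}+T)(PF_{8}+T).\]
Replacing $U$ by $PU'$ yields exactly
\[U'^{2}=(PF_{6}+T)(PF_{7}+T)(PF_{8}+T).\]
This substitution is birational, since it is invertible over the function field because $P\neq0$. Equivalently, one can observe that the twist by the square class of $P^{5}\equiv P$ is absorbed by scaling $t$ by $P$. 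I will double-check the parity of the exponents carefully, since an odd power of $P$ would produce a quadratic twist rather than an isomorphism. That parity check is the only delicate point.

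Finally, the invariants come from the standard formulas for $y^{2}=(x-e_{1})(x-e_{2})(x-e_{3})$. There $\Delta=16\prod_{i<j}(e_{i}-e_{j})^{2}$. With $e_{j}=-PF_{j}$ this gives $\Delta=16P^{6}(F_{6}-F_{7})^{2}(F_{6}-F_{8})^{2}(F_{7}-F_{8})^{2}$, as claimed. For $J$, the Legendre-type formula gives $J=\frac{4(e_{1}^{2}+e_{2}^{2}+e_{3}^{2}-e_{1}e_{2}-e_{1}e_{3}-e_{2}e_{3})^{3}}{27\prod(e_{i}-e_{j})^{2}}$, normalized so that $j=1728J$. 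Here the factors $P^{6}$ in the numerator and denominator cancel, leaving the stated expression. These are routine computations. The main care is in the birationality step, namely that the blow-up of $A$ makes the projection a morphism whose generic fibre is this curve, and in confirming the normalization convention for $J$ used in \cite{CK-C2}.
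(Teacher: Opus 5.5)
Your proposal is correct and follows the approach the paper indicates; the paper itself gives no proof here and cites the result. Projecting from $A$ gives $u^{2}=P\prod_{j=6}^{8}(F_{j}+t)$ with $P=F_{1}\cdots F_{5}$. Your substitution $T=Pt$, $U=Pu$ (of weights $6$ and $9$) is precisely the quadratic twist by $P$ that the paper mentions after the theorem, and $\Delta$ and $J$ then follow from the standard formulas with $j=1728J$. Your parity worry is already settled by your own computation, since $P\cdot P^{2}=P^{3}$ is exactly absorbed by scaling $t$. One small correction: the odd points on a generic line $L$ through $A$ are the three points $Q_{6},Q_{7},Q_{8}$ together with $A$, not four $Q_{i}$ plus $A$.
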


\begin{theorem}[\mbox{\cite[Thm. 2.2]{CK-C2}}]\label{thm:ellfour}
	If $A=(0:0:0:1)$ is a fourfold point, then
	the double octic Calabi--Yau threefold defined by
	\begin{equation}\label{docsing4}
		u^{2}=F_{1}F_{2}F_{3}F_{4}(F_{5}+t)
		(F_{6}+t)(F_{7}+t)(F_{8}+t)
	\end{equation}
	is birational to the elliptic fibration

	\begin{multline*}
		u^{2}  = \Bigl(F_{1}F_{2}F_{3}F_{4} (F_{5} - F_{7}) (F_{5} - F_{6}) + t\Bigr) \times\\
		\Bigl(F_{1}F_{2}F_{3}F_{4} (F_{5} - F_{8}) (F_{5} - F_{6}) + t\Bigr)\Bigl(F_{1}F_{2}F_{3}F_{4} (F_{5} - F_{8}) (F_{5} - F_{7}) + t\Bigr)
	\end{multline*}
	with the discriminant
	\[\Delta = 16 (F_{1}F_{2}F_{3}F_{4})^{6}
	\prod\limits_{5\le i<j\le 8}\left({F_{i}}  -{F_{j}}  \right)^{2}
	\]
	and the $J$-invariant

	\[
	J = \frac
	{4\left(6F_{5}F_{6}F_{7}F_{8}  -\sum\limits_{5\le i<j<k\le 8}F_{i}F_{j}F_{k}(F_{i}+F_{j}+F_{k}) + \sum\limits_{5\le i<j\le 8}F_{i}^{2}F_{j}^{2}
		\right)^{3}}
	{\left(\prod\limits_{5\le i<j\le 8}(F_{i}-F_{j})^{2}\right)}
	\]

\end{theorem}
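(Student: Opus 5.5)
The plan is to realise the elliptic fibration as the projection from the fourfold point $A=(0:0:0:1)$, and then put the generic fibre into Weierstrass form by moving one of its rational $2$-torsion points to infinity. A line through $A$ is determined by $(x:y:z)\in\PP^{2}$. On the affine chart we treat $t$ as the coordinate along the line, so the forms $F_i=F_i(x,y,z)$ are constants on the generic fibre. Write $\Phi:=F_{1}F_{2}F_{3}F_{4}$. Over the function field of $\PP^{2}$, the generic fibre of \eqref{docsing4} is then the genus one curve
\[u^{2}=\Phi\,(t+F_{5})(t+F_{6})(t+F_{7})(t+F_{8}),\]
a double cover of the line branched at the four points $t=-F_{i}$. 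This matches the earlier remark that $D|L=4A+Q_{1}+\dots+Q_{4}$. Because the branch points are rational over $\CC(x,y,z)$, the curve has a rational point $(t,u)=(-F_{5},0)$, so it is birational to a Weierstrass cubic over the same field. Every step below is a birational change of the variables $(t,u)$ whose coefficients lie in $\CC(x,y,z)$, so it induces a birational map of the threefolds.

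\emph{Step 1: send $t=-F_{5}$ to infinity.} Put $s=1/(t+F_{5})$ and $d_{i}=F_{i}-F_{5}$ for $i=6,7,8$. Then
\[t+F_{i}=\frac{1+d_{i}s}{s}.\]
Multiplying the equation by $s^{4}$ gives
\[(us^{2})^{2}=\Phi\,(1+d_{6}s)(1+d_{7}s)(1+d_{8}s),\]
which is a cubic in $s$.

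\emph{Step 2: rescale to the monic form of the statement.} Substitute $s=T/(\Phi d_{6}d_{7}d_{8})$. Then
\[1+d_{i}s=\frac{\Phi d_{j}d_{k}+T}{\Phi d_{j}d_{k}}\qquad(\{i,j,k\}=\{6,7,8\}),\]
and $d_{j}d_{k}=(F_{5}-F_{j})(F_{5}-F_{k})$. These are exactly the factors $F_{1}F_{2}F_{3}F_{4}(F_{5}-F_{j})(F_{5}-F_{k})+T$ in the statement. The resulting overall constant is
\[\Phi\cdot\frac{1}{\Phi^{3}(d_{6}d_{7}d_{8})^{2}}=\frac{1}{\bigl(\Phi d_{6}d_{7}d_{8}\bigr)^{2}},\]
which is a square. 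So rescaling $u$ by $\Phi d_{6}d_{7}d_{8}\,s^{2}$ and renaming $T$ as $t$ gives the displayed model. The explicit inverse substitutions show that the map is birational.

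\emph{Step 3: discriminant and $J$.} For $y^{2}=(x-e_{1})(x-e_{2})(x-e_{3})$ one has $\Delta=16\prod(e_{i}-e_{j})^{2}$. Here $e_{i}=-\Phi d_{j}d_{k}$, so each difference is
\[e_{i}-e_{j}=\Phi\,d_{k}(d_{i}-d_{j})=\Phi\,(F_{k}-F_{5})(F_{i}-F_{j}).\]
The product of the three squared differences is therefore $\Phi^{6}\prod_{5\le i<j\le 8}(F_{i}-F_{j})^{2}$, which gives the stated $\Delta$. The $J$-invariant comes from the Legendre-type formula $J\propto(\sum e_{i}^{2}-\sum e_{i}e_{j})^{3}/\prod(e_{i}-e_{j})^{2}$. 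In this formula $\Phi$ cancels by homogeneity: numerator and denominator both have degree $6$ in the $e_i$. One then rewrites $\sum d_{j}^{2}d_{k}^{2}-\sum_{i} d_{i}^{2}d_{j}d_{k}$ in terms of $F_{5},\dots,F_{8}$. This rewriting is the main (purely computational) obstacle. The expression must be symmetric in all four $F_{5},\dots,F_{8}$, even though $F_{5}$ played a special role, and it should expand to
\[6F_{5}F_{6}F_{7}F_{8}-\sum F_{i}F_{j}F_{k}(F_{i}+F_{j}+F_{k})+\sum F_{i}^{2}F_{j}^{2}.\]
I would first verify this identity and the normalising constant in the statement by direct expansion in a computer algebra system. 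A useful check is that the expression is invariant under translating all $F_{i}$ by a common constant, since it depends only on the differences $d_{i}$.
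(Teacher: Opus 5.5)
Your argument is correct. The paper gives no proof of this statement; it only quotes the result from the companion paper. The displayed model, with factors $F_{1}F_{2}F_{3}F_{4}(F_{5}-F_{j})(F_{5}-F_{k})+t$, is exactly what your substitution produces:
\[
t\mapsto \frac{F_{1}F_{2}F_{3}F_{4}\,d_{6}d_{7}d_{8}}{t+F_{5}},\qquad u\mapsto \frac{F_{1}F_{2}F_{3}F_{4}\,d_{6}d_{7}d_{8}\,u}{(t+F_{5})^{2}},
\]
which is homogeneous of weights $6$ and $9$, as required. Your Steps 1--2 and the discriminant computation are right.

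The one thing you leave undone is the $J$-invariant, and it closes quickly.

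\emph{The identity.} Let $R$ be the quartic in the statement. In monomial symmetric polynomials of $F_{5},\dots,F_{8}$ it is $R=m_{(2,2)}-m_{(2,1,1)}+6m_{(1,1,1,1)}$. Setting $F_{5}=0$ leaves $\sum_{6\le j<k\le 8}F_{j}^{2}F_{k}^{2}-F_{6}F_{7}F_{8}(F_{6}+F_{7}+F_{8})$, which is your expression with $d_{i}=F_{i}$. Moreover
\[
\sum_{i}\partial R/\partial F_{i}=2m_{(2,1)}-\bigl(2m_{(2,1)}+6m_{(1,1,1)}\bigr)+6m_{(1,1,1)}=0,
\]
so $R$ is translation invariant, as your expression manifestly is. Two translation-invariant polynomials that agree on $F_{5}=0$ agree identically.

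\emph{The constant.} Use $c_{4}=16\bigl(\sum e_{i}^{2}-\sum e_{i}e_{j}\bigr)$ and $J=c_{4}^{3}/(1728\,\Delta)$, which is the normalisation of Theorem~\ref{thm:ellfive}. Together with $\sum e_{i}^{2}-\sum e_{i}e_{j}=(F_{1}F_{2}F_{3}F_{4})^{2}R$, this gives
\[
J=\frac{4R^{3}}{27\prod_{5\le i<j\le 8}(F_{i}-F_{j})^{2}}.
\]
So the formula as displayed in the statement is missing the factor $27$ in the denominator. This is a misprint in the statement, not a flaw in your method. It is, however, exactly what your deferred check would have uncovered, so carry the constant through explicitly instead of writing $J\propto\cdots$.
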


Elliptic fibrations resulting from the above theorem can be written as
\[u^{2}=t^{3}+f_{6}(x,y,z)t^{2}+f_{12}(x,y,z)t+f_{18}(x,y,z),\]
where $f_{6}, f_{12}, f_{18}$ are homogeneous polynomials in variables $x,y,z$ of degree 6, 12 and 18 respectively. The discriminant of the elliptic fibration is a non-reduced curve of degree 36, its reduction is a union of at most 10 lines (in the case of a fivefold point it is a union of at most 8 lines). Singular fibers at generic points of a line in the discriminant have Kodaira type that can be read from multiplicities of $\Delta$ and coefficients of the Weierstrass equation along that line.

The elliptic fibration \eqref{docsing4}  has the same $J$-function as the fibration
\[			u^{2}  = \Bigl( (F_{5} - F_{7}) (F_{5} - F_{6}) + t\Bigr)
\Bigl( (F_{5} - F_{8}) (F_{5} - F_{6}) + t\Bigr)\Bigl((F_{5} - F_{8}) (F_{5} - F_{7}) + t\Bigr)
\]
but they  are not birational in general because they have singular fibers of different types at generic points of $F_{1}F_{2}F_{3}F_{4} = 0$.
Smooth fibers of both fibrations are isomorphic, they differ by the quadratic twist by
$\sqrt{F_{1}F_{2}F_{3}F_{4}}$.
Similarly, for the elliptic fibrations ~\eqref{docsing5} and the elliptic fibration
\[
u^{2}=(F_{6}+t)(F_{7}+t)(F_{8}+t)
\]

\subsection{Elliptic fibrations determined by two skew double lines}

Assume that the arrangement planes $P_{1}, P_{2}, P_{3}$ and $P_{4}$ do not intersect at a point. Then lines $L_{1,2}=P_{1}\cap P_{2}$ and $L_{3,4}=P_{3}\cap P_{4}$ are disjoint. Let $L$ be a generic line that intersects $L_{12}$ and $L_{34}$, restriction $D|L = 2A+2B+Q_{1}+Q_{2}+Q_{3}+Q_{4}$,
where $Q_{1}, \dots, Q_{4}$ are pair-wise disjoint intersection points of $L$ with planes $P_{5},\dots,P_{8}$. The line $L$ lifts to an elliptic curve in the double octic.

After change of coordinates we can  assume that $P_{1}=x, P_{2}=y, P_{3}=z, P_{4}=t$, denote
\[F_{i}:=F_{i}(\gamma,\beta,0,0), \quad G_{i}:=F_{i}(0,0,\alpha,\gamma).\]
\begin{theorem}[\mbox{\cite{CK-C2}[Thm. 3.1]}]
	A double octic Calabi--Yau threefold defined by the octic arrangement
	\[D:=\{(x,y,z,t)\in\PP^{3}: xyzt\cdot F_{5}(x,y,z,t)\cdots F_{8}(x,y,z,t)=0\}\]
	is birational to the elliptic fibration
	\begin{equation}\label{eq:gen2}
		u^{2}=t\biggl(t-(F_{6} G_{8} - F_{8} G_{6}) (F_{5} G_{7} - F_{7} G_{5})\biggr) \biggl(t-(F_{6} G_{7} - F_{7} G_{6})
		(F_{5} G_{8} - F_{8} G_{5})\biggr),
	\end{equation}
	with the discriminant
	\[
	\Delta=16\prod_{5\le i<j\le8}(F_{i}G_{j}-F_{j}G_{i})^{2}
	\]
	and the $J$-invariant
	\begin{multline*}
		J=	\Biggl(27 \prod\limits_{5\le i<j\le8}\bigl(F_{i}G_{j}-F_{j}G_{i}\bigr)^{2}\Biggr)^{-1}
		\biggl(4\sum\limits_{\{i,j,k,l\}=\{5,6,7,8\},\atop i<j,\; k<l}F_{i}^{2}F_{j}^{2}G_{k}^{2}G_{l}^{2}\\
		-\sum\limits_{\{i,j,k,l\}=\{5,6,7,8\},\atop j<k} F_{i}^{2}F_{j}F_{k}G_{j}G_{k}G_{l}^{2}
		+6 {F_5} {F_6} {F_7} {F_8} {G_5} {G_6} {G_7} {G_8}
		\biggr)^{3}
	\end{multline*}
\end{theorem}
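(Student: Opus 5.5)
The plan is to realize the fibration geometrically, write the restricted double cover on each fibre as a genus one curve with a rational point, and then move that point to infinity to reach the Weierstrass form \eqref{eq:gen2}. Keep the coordinates $P_{1}=x$, $P_{2}=y$, $P_{3}=z$, $P_{4}=t$, so that $L_{1,2}=\{x=y=0\}$ and $L_{3,4}=\{z=t=0\}$.

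\emph{Step 1: the fibration.} A line meeting both $L_{3,4}$ and $L_{1,2}$ is spanned by a point $(a:b:0:0)\in L_{3,4}$ and a point $(0:0:c:d)\in L_{1,2}$. So the family of such lines is parametrized by $B=\PP^{1}\times\PP^{1}$. Through a generic point of $\PP^{3}$ there passes exactly one line meeting two given skew lines. Hence the rational map $\PP^{3}\dashrightarrow B$ is well defined, and $\PP^{3}$ is birational to a $\PP^{1}$-bundle over $B$. Concretely, the points of the line are $\lambda(a,b,0,0)+\mu(0,0,c,d)$. Blowing up $L_{1,2}$ and $L_{3,4}$, which the resolution of \cite{CSz} does anyway, resolves this map. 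In the notation of the statement, $F_{i}$ is the restriction of the $i$-th plane to the first point and $G_{i}$ its restriction to the second, both linear in the respective base coordinates.

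\emph{Step 2: the generic fibre.} Restricting $u^{2}=xyzt\,F_{5}\cdots F_{8}$ to the line gives
\[u^{2}=\lambda^{2}\mu^{2}\,(abcd)\prod_{i=5}^{8}(\lambda F_{i}+\mu G_{i}).\]
Following the normalization remark above (even multiplicities drop out), replace $u$ by $u/(\lambda\mu)$. This yields the genus one curve $u^{2}=abcd\prod_{i=5}^{8}(\lambda F_{i}+\mu G_{i})$ over the function field of $B$. The four roots $(\lambda:\mu)=(-G_{i}:F_{i})$ are rational over this field, so the point $(-G_{5}:F_{5})$ with $u=0$ gives a rational section. Therefore the double octic is birational to the Jacobian, i.e.\ to a Weierstrass model.

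\emph{Step 3: explicit form.} Apply the Möbius transformation $s=(\lambda F_{6}+\mu G_{6})/(\lambda F_{5}+\mu G_{5})$, or a similar one, which sends the root of the fifth factor to $\infty$. After clearing the denominator $(\lambda F_{5}+\mu G_{5})^{4}$ into $u$, the quartic becomes a cubic in $s$. Its roots are cross-ratio-type expressions whose differences are the $2\times2$ determinants $F_{i}G_{j}-F_{j}G_{i}$. Next rescale $s$ and $u$ by suitable products of these determinants, together with the twisting factor $abcd$ (absorbed by a substitution $t\mapsto ct$, $u\mapsto c^{2}u$ and the freedom in the affine base coordinates). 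Finally translate one root to $0$. This should produce exactly \eqref{eq:gen2}, with the roots $0$, $(F_{6}G_{8}-F_{8}G_{6})(F_{5}G_{7}-F_{7}G_{5})$ and $(F_{6}G_{7}-F_{7}G_{6})(F_{5}G_{8}-F_{8}G_{5})$. The Plücker relation $[56][78]-[57][68]+[58][67]=0$, where $[ij]=F_{i}G_{j}-F_{j}G_{i}$, shows the differences of these roots are again products of determinants. With that, $\Delta=16\prod(e_{i}-e_{j})^{2}$ gives the stated $\prod_{5\le i<j\le 8}[ij]^{2}$. $J$ follows from the Legendre formula $J=4(e_{1}^{2}+e_{2}^{2}+e_{3}^{2}-e_{1}e_{2}-e_{1}e_{3}-e_{2}e_{3})^{3}/\bigl(27\prod(e_{i}-e_{j})^{2}\bigr)$, expanded in the $F_{i},G_{j}$.

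The main obstacle is the bookkeeping in Step 3. One has to choose the Möbius map and the scalings so that all denominators and the twist $abcd$ disappear, leaving polynomial coefficients of the right weights. The normalization in the statement ($F_{i}:=F_{i}(\gamma,\beta,0,0)$, $G_{i}:=F_{i}(0,0,\alpha,\gamma)$) suggests choosing the base coordinates with a shared variable $\gamma$ to accomplish this. I would first try the map sending the $G_{5}/F_{5}$ root to $\infty$ and the $G_{6}/F_{6}$ root to $0$, and then check the resulting coefficients against the Plücker relation.
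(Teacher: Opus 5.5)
\textbf{There is a genuine gap in Step 3.}

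Your Steps 1 and 2 are correct, and you rightly isolate the constant $abcd$ in $u^{2}=abcd\prod_{i=5}^{8}(\lambda F_{i}+\mu G_{i})$. The gap is the claim in Step 3 that $abcd$ can be absorbed by $t\mapsto ct$, $u\mapsto c^{2}u$ and a choice of affine base coordinates. This cannot work, because $abcd$ is a quadratic twist:
\begin{itemize}
\item Changes of Weierstrass coordinates only translate the roots or multiply them by squares ($t\mapsto c^{2}t$, $u\mapsto c^{3}u$). Your substitution merely moves $c$ from the leading coefficient into the roots.
\item Let $K$ be the function field of the base. The class of $abcd$ in $K^{*}/K^{*2}$ is that of $\frac{b}{a}\cdot\frac{d}{c}$. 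This is not a square, and no birational reparametrization of the base makes it one.
\item In the statement's normalization $(a,b,c,d)=(\gamma,\beta,\alpha,\gamma)$, so $abcd=\alpha\beta\gamma^{2}$, which is $\alpha\beta$ modulo squares. The shared $\gamma$ does not help.
\end{itemize}
If you carry out your M\"obius map honestly (send $-G_{5}/F_{5}$ to $\infty$, remove the square $F_{5}^{2}$, translate the sixth root to $0$), then with your $[ij]=F_{i}G_{j}-F_{j}G_{i}$ you get
\[
u^{2}=t\bigl(t-abcd\,[68][57]\bigr)\bigl(t-abcd\,[67][58]\bigr),\qquad
\Delta=16\,(abcd)^{6}\prod_{5\le i<j\le 8}[ij]^{2}.
\]
The Pl\"ucker relation $[57][68]-[58][67]=[56][78]$ gives this $\Delta$, as you anticipated. $J$ comes out as stated because it is twist invariant.

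\textbf{The factor is geometric, not an artefact of coordinates.} Over the curves $a=0$, $b=0$, $c=0$, $d=0$ of the base, the line lies in $P_{1},P_{2},P_{3},P_{4}$ respectively. The fibre of the double octic there is $2\tilde\ell+C_{5}+\dots+C_{8}$, where the $C_{i}$ come from the blow-ups of the double lines $P_{j}\cap P_{i}$. This is a fibre of type $I_{0}^{*}$, whereas $16\prod[ij]^{2}$ does not vanish there in general.

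There is a second check. Without the factor, the roots have bidegree $(2,2)$, so the Weierstrass model over $\PP^{1}\times\PP^{1}$ has $L=\mathcal O(1,1)$ and $K=\pi^{*}\mathcal O(-1,-1)$. It then has Kodaira dimension $-\infty$ and is not birational to any Calabi--Yau threefold. With the factor, $L=\mathcal O(2,2)=-K_{\PP^{1}\times\PP^{1}}$, as it should be.

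\textbf{Consequence.} No bookkeeping will land you on \eqref{eq:gen2} and the displayed $\Delta$ as printed. The paper gives no argument of its own here, only the citation \cite{CK-C2}, so there is no hidden normalization to appeal to. What your argument does prove is the twisted formula above, with the factor kept inside the roots. This matches how $F_{1}\cdots F_{4}$, resp.\ $F_{1}\cdots F_{5}$, is kept in Theorems~\ref{thm:ellfour} and \ref{thm:ellfive}. That twisted version is what you should state and prove.
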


\subsection{Kummer fibrations}\label{sec:kumfibr}

Most double octics are birational to a Kummer fibration associated to a fiber product of two rational elliptic surfaces with section. If an octic arrangement $P_{1}\cup\dots\cup P_{8}$ satisfies property (3) from the beginning of this section, then after reordering planes $P_{i}$, we can assume that $P_{1}\cap \dots\cap P_{4}=\{A\}$, $P_{5}\cap\dots\cap P_{8}=\{B\}$  for some points $A$ and $B$. In this situation we shall show that the associated double octic  is birational to a Kummer fibration. More precisely, let $S$ be a generic plane containing points $A$ and $B$, projections from the point $A$ of lines $P_1\cap S,\dots P_4\cap S$ give four points in a projective line and consequently an elliptic curve $E_{1}$. In a similar manner the planes $P_{5},\dots,P_{8}$ define another elliptic curve $E_{2}$.

Blowing-up points $A$ and $B$ and then contracting the strict transform of the line joining these points we get a surface isomorphic to $\PP^{1}\times\PP^{1}$, the restriction $D|S$ lifts to 8 lines in  $\PP^{1}\times\PP^{1}$ - four in each ruling.
Two quadruples of lines determine the same elliptic curves $E_{1}$
 and $E_{2}$. Consequently, the restriction of the double octic to plane $S$ is birational to the Kummer surface $\operatorname{Km}(E_{1}\times E_{2})$ of the product $E_{1}\times E_{2}$. Varying the plane $S$ defines two elliptic surfaces.

Singular fibers of the first fibration are determined by projections from $A$ of the six lines
\[P_{1}\cap P_{2}, P_{3}\cap P_{4}, P_{1}\cap P_{3}, P_{2}\cap P_{4}, P_{1}\cap P_{4}, P_{2}\cap P_{3}.\]
Projections of these six lines need not be distinct, intersection projecting to a point determines type of the singular fiber at this point, one, two, three or four intersections  correspond to a singular fiber of type $I_{2}$, $I_{4}$, $I_{0}^{*}$ and $I_{2}^{*}$, respectively (equivalently, the Euler characteristic of the singular fiber is twice number of intersections projecting to a point).

On the level of geometry of a double octic, a singular fiber of type $I_{2}^{*}$ happens when
the point $A$ lies on a triple line and $B$ is a fivefold point.

The singular fiber of type $I_{0}^{*}$ is when the point $A$ lies on a triple line and $B$ is a fourfold point or $B$ is a fivefold point but $A$ does not lie on a triple line.
The two cases corresponds to situation when intersections of three pairs of lines  $P_{1}\cap P_{2}$, $P_{1}\cap P_{3}$ and  $P_{2}\cap P_{3}$ or intersections of the lines  $P_{1}\cap P_{2}$, $P_{1}\cap P_{3}$ and $P_{1}\cap P_{4}$ projects to the same point.
The two cases are  different, they where use to construct a birational map between f.i. double octics no. 32 and 69 (see \cite{CK-C}).

The singular fiber of type $I_{4}$ happens, when a plane spanned by two double lines through the point A contains the point B. This geometric property was used in  \cite{CM} to prove modularity of some double octic including two special elements of the family No. 4.

In a similar way the lines \[P_{5}\cap P_{6}, P_{7}\cap P_{8}, P_{5}\cap P_{7}, P_{6}\cap P_{8}, P_{5}\cap P_{8}, P_{6}\cap P_{7}\]
determine singular fibers of the second elliptic surface.

From the above description it follows that we get six types of rational elliptic surfaces (distinguished by the types of singular fibers)
\[
\def\arraycolsep{2mm}
\begin{array}[t]{|c|cccccc|c|}
	\hline
	\def\multicolumn#1#2{}
	\quad S_{1} \quad&I_{0}^{*}&I_{0}^{*}&&&&&{xz(x+t)(x+\lambda t)}\\
	\cline{2-7}
	&\infty&0&&&&&\\
	\hline
	S_{2}&I_{4}&I_{4}&I_{2}&I_{2}&&&{x(x+t)(x+z)(x+z+t)}\\
	\cline{2-7}
	&\infty&0&1&-1&&&\\
	\hline
	S_{3}&I_{0}^{*}&I_{2}&I_{2}&I_{2}&&&{x(x+t)(x+\lambda t)(x+z)}\\
	\cline{2-7}
	&\infty&0&1&\lambda&&&{t(x+\lambda z)(x+z)(x+\lambda t)}\\
	\hline
	S_{4}&I_{2}^{*}&I_{2}&I_{2}&&&&{xt(x+z)(x+t)}\\
	\cline{2-7}
	&\infty&0&1&&&&\\
	\hline
	S_{5}&I_{4}&I_{2}&I_{2}&I_{2}&I_{2}&&{x(x+t)(x+z-\lambda
		t)(x+z)}\\
	\cline{2-7}
	&\infty&0&1&\lambda&\lambda+1&&\\
	\hline
	S_{6}&I_{2}&I_{2}&I_{2}&I_{2}&I_{2}&I_{2}&{x(x+t)(x+z)(x+\frac{1}{\mu-\lambda}
		(z-\lambda t))}\rule[-3mm]{0pt}{8mm}\\
	\cline{2-7}
	&\infty&0&1&\lambda&\mu&\frac{\lambda}{\lambda-\mu+1}&
	x(x+t)(\lambda x+z)((\mu-1)x+z-t)\rule[-3mm]{0pt}{8mm}\\
	\hline
\end{array}
\]\vspace{2mm}

An elliptic surface with singular fibers of types $I_{2}, I_{2}, I_{2}, I_{2}, I_{2}, I_{2}$ or $I_{2}, I_{2}, I_{2}, I_{2}, I_{4}$ is not uniquely
determined by types and position of singular fibers.
In these cases the singular fibers of types $I_{2}$ comes in pairs (coming intersections of disjoint pairs of planes), in particular only a pair of `conjugated'' $I_{2}$ fibers can collapse to an $I_{4}$ fiber.

In the case of surfaces with six $I_{2}$ fibers, the position and coupling of singular fibers
do not determine the surface uniquely. For the two models of the surface $S_{6}$ we get the following position of singular fibers

\[
\begin{array}{c|c|c|c|c|c|c}
u^{2}=\cdots &P_{1}\cap P_{2}&  P_{1}\cap P_{3}&  P_{1}\cap P_{4}&  P_{2}\cap P_{3}&  P_{2}\cap P_{4}&  P_{3}\cap P_{4}\\\hline
 {x(x+t)(x+z)(x+\frac{1}{\mu-\lambda}
 	(z-\lambda t))}&\infty&0&\lambda&1&\mu&\frac\lambda{\lambda-\mu+1} \\\hline
x(x+t)(\lambda x+z)((\mu-1)x+z-t) &\infty&0&1&\lambda&\mu&\frac\lambda{\lambda-\mu+1}
\end{array}\]
Again, we see that these two surfaces differs when considered as double covers of $\PP^{2}$ branched along fours lines, but they are birational as elliptic surfaces. In \cite{CK-C} we used this phenomenon to prove that double octics No. 261 and 264 are birational.

In \cite{CK-C} we gave a more detailed description of the geometry of possible elliptic fibrations. The Mordell-Weil rank of these surfaces equals
\begin{gather*}
	\operatorname{rank}(MW(S_{1})) = 0, \
	\operatorname{rank}(MW(S_{2})) = 0, \
	\operatorname{rank}(MW(S_{3})) = 1, \\
	\operatorname{rank}(MW(S_{4})) = 0, \
	\operatorname{rank}(MW(S_{5})) = 1, \
	\operatorname{rank}(MW(S_{6})) = 2
\end{gather*}
The surface $S_{1}$ is exceptional, all smooth fibers of $S_{1}$ are isomorphic. This surface depends not only on the position of singular fibers but also on the $j$-invariant of generic fiber (equal $256\frac{(1-\lambda+\lambda^{2})^{3}}{(\lambda^{2}(1-\lambda^{2})}$).
Quadratic pull-back of this surface by the map $z\mapsto z^{2}$ is trivial, it is isomorphic to the product $\PP^{1}\times E$, where $E$ is the elliptic curve
\[E:=\{y^{2}=x(x+1)(x+\lambda)\}.\]

Remaining surfaces have the torsion subgroup of the Mordell-Weil group isomorphic to $(\ZZ/2\ZZ)^{2}$. In particular there are exists an isogeny between the surface $S_{2}$ and its pull-back by the map $z\mapsto \frac{z+1}{z-1}$ which swaps fibers of type $I_{2}$ with fibers of type $I_{4}$.

On the other hand the surface $S_{4}$ and its pull-back by the map $z\mapsto 1z$ have isomorphic smooth fiber (and so equal $j$-functions) but they are not isogeneous, on the other hand quadratic pullbacks of both surfaces by the map $z\mapsto z^{2}$ are both isomorphic to $S_{2}$.

Quadratic pull-back of the surface $S_{4}$ ramified along the $I_{2}^{*}$-fiber and a smooth fiber is the surface $S_{5}$. The same holds true for the quadratic pull-back of the surface $S_{3}$ ramified along the fiber $I_{0}^{*}$ and one of the $I_{2}$ fibers, whereas a quadratic pullback of $S_{3}$ ramified along $I_{0}^{*}$ fiber and a smooth fiber is the surface $S_{6}$.

Based on this geometric observations a Kummer fibration gives a lot of extra geometric information about a double octic.
Rational maps between double octic Calabi--Yau threefolds induced by isogenies and pull-backs of elliptic surfaces were used in \cite{CvS} do give a geometric explanation of equality or equivalence of Picard-Fuchs operators for certain families of orphan double octic.
Description of a Kummer fibration and a birational self map that is not induced by projective transformation were key ingredients in the proof of Hilbert modularity of a special element of family no. 250 \cite{CScvS}.

Incidences tables of 238 double octics contain a pair of ``disjoint'' quadruples, there are in total 389 such pairs

\begin{theorem}
	Double octic Calabi--Yau threefolds for Arrangements Nos.: \\[2mm]
	$1$, $2$, $3$, $4$, $5$, $6$, $7$, $8$, $9$, $10$, $11$, $12$, $13$, $14$, $15$, $16$, $17$, $18$, $19$, $20$, $21$, $22$, $23$, $24$, $25$, $26$, $27$, $28$, $29$, $30$, $31$, $32$, $33$, $36$, $38$, $39$, $41$, $42$, $43$, $44$, $49$, $50$, $52$, $53$, $54$, $55$, $56$, $57$, $58$, $59$, $60$, $61$, $62$, $63$, $64$, $65$, $66$, $67$, $68$, $69$, $70$, $73$, $74$, $77$, $78$, $79$, $80$, $84$, $86$, $87$, $89$, $90$, $91$, $92$, $93$, $95$, $96$, $97$, $98$, $100$, $101$, $103$, $104$, $105$, $106$, $108$, $112$, $113$, $114$, $116$, $118$, $119$, $121$, $122$, $123$, $124$, $125$, $126$, $129$, $131$, $132$, $133$, $138$, $140$, $141$, $142$, $144$, $145$, $146$, $147$, $148$, $149$, $150$, $151$, $153$, $159$, $160$, $161$, $167$, $172$, $173$, $174$, $180$, $183$, $184$, $189$, $191$, $193$, $194$, $195$, $196$, $197$, $201$, $208$, $209$, $213$, $216$, $220$, $221$, $222$, $225$, $230$, $234$, $236$, $238$, $239$, $240$, $241$, $242$, $243$, $244$, $245$, $247$, $248$, $249$, $250$, $252$, $253$, $254$, $255$, $256$, $257$, $258$, $259$, $260$, $261$, $264$, $265$, $267$, $268$, $269$, $270$, $271$, $272$, $274$, $275$, $277$, $279$, $280$, $281$, $282$, $284$, $286$, $287$, $289$, $291$, $292$, $296$, $297$, $298$, $299$, $300$, $306$, $307$, $311$, $317$, $320$, $322$, $324$, $327$, $329$, $333$, $334$, $336$, $337$, $341$, $345$, $346$, $347$, $348$, $352$, $356$, $357$, $366$, $372$, $373$, $378$, $382$, $383$, $384$, $385$, $386$, $392$, $393$, $394$, $395$, $401$, $408$, $414$, $420$, $425$, $429$, $437$, $438$, $440$, $448$, $452$, $453$
	are birational to Kummer fibrations.

	The following double octics are birational
$(10, 16)$, $(11, 17)$, $(12, 18)$, $(14, 52)$, $(15, 57, 68)$, $(21, 53)$, $(23, 54)$, $(24, 25)$, $(27, 56)$, $(30, 55)$, $(31, 59)$, $(32, 69)$, $(33, 70)$, $(36.73)$, $(38, 60, 74, 89)$, $(39, 78)$, $(41, 77)$, $(42, 79)$, $(44, 61, 84, 90)$, $(49, 62, 87, 91)$, $(50, 86)$, $(63, 92)$, $(64, 114, 193)$, $(65, 129, 194)$, $(66, 138, 195)$, $(67, 140, 196)$, $(96, 100)$, $(97, 98)$, $(101, 104)$, $(103, 105, 112)$, $(106, 108)$, $(118, 119, 121, 126)$, $(122, 123)$, $(124, 125)$, $(131, 133)$, $(141, 142)$, $(153, 197)$, $(159, 201)$, $(160, 208)$, $(161, 209)$, $(167, 174, 180, 220, 221)$, $(172, 216)$, $(173, 213)$, $(183, 225)$, $(184, 230)$, $(189, 234)$, $(191, 236)$, $(250, 258)$, $(255, 268)$, $(259, 265)$, $(260, 269)$, $(261, 264)$, $(267, 275)$, $(277, 291)$, $(282, 284)$, $(289, 297)$, $(292, 311, 320)$, $(296, 298)$, $(299, 322)$, $(306, 324, 327)$, $(329, 356)$, $(334, 378)$, $(337, 348)$, $(347, 357, 373)$, $(386, 392)$, $(393, 414, 420)$
\end{theorem}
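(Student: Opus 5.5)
The plan has two parts, one for each half of the statement.

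\emph{Kummer fibrations.} For each listed arrangement I will exhibit a pair of disjoint quadruples $Q_{1},Q_{2}\in\mathcal T$ with $Q_{1}\cup Q_{2}=\{1,\dots,8\}$. This is a purely combinatorial search through the minimal incidence tables of Proposition~\ref{prop:it455}, carried out in Magma. The 238 arrangements listed should be exactly the tables that contain such a pair; they account for the 389 pairs counted above. After renumbering, $P_{1}\cap\dots\cap P_{4}=\{A\}$ and $P_{5}\cap\dots\cap P_{8}=\{B\}$. Every arrangement plane then contains $A$ or $B$, so condition (3) of Section~\ref{sec:fibr} holds, and Subsection~\ref{sec:kumfibr} applies.

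Concretely, the pencil of planes $S\supset\overline{AB}$ is parametrised by $\PP^{1}$. On each $S$, blowing up $A$ and $B$ and contracting the strict transform of $\overline{AB}$ gives $\PP^{1}\times\PP^{1}$ with four lines in each ruling. Hence the restriction of the double octic to $S$ is birational to $\operatorname{Km}(E_{1,S}\times E_{2,S})$. As $S$ varies, $E_{1,S}$ and $E_{2,S}$ sweep out two rational elliptic surfaces over the same $\PP^{1}$, and $X$ is birational to the Kummer fibration of their fiber product.

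To make this explicit I will choose coordinates with $A=(0:0:0:1)$ and $B=(0:0:1:0)$. Then $F_{i}=F_{i}(x,y,t)$ for $i\le4$, $F_{j}=F_{j}(x,y,z)$ for $j\ge5$, the pencil is $y=\lambda x$, and the equation becomes $u^{2}=G_{\lambda}(x,t)H_{\lambda}(x,z)$. This is birational to $\{v^{2}=G_{\lambda}(1,t)\}\times\{w^{2}=H_{\lambda}(1,z)\}$ modulo the involution $(v,w)\mapsto(-v,-w)$.

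\emph{Birationality between arrangements.} Birationalities come from distinct Kummer fibrations whose pairs of elliptic surfaces agree up to isomorphism, isogeny or pullback. For each listed tuple, for example $(10,16)$ or $(32,69)$, I will compute the two Kummer structures from the chosen disjoint quadruples. The types and positions of singular fibers are read off from the projections of the six double lines through $A$ (respectively $B$), which determines one of the surfaces $S_{1},\dots,S_{6}$ for each factor. I will then match the factors after a common reparametrisation of the base $\PP^{1}$.

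When the fiber types agree, the identifications in Subsection~\ref{sec:kumfibr} are used directly: the two models of $S_{6}$ are birational, and the two $I_{0}^{*}$ cases give the same surface $S_{3}$ or $S_{1}$. When they differ, a fiberwise isogeny is used instead, such as the 2-isogeny of $S_{2}$ swapping $I_{2}$ and $I_{4}$ fibers, or the 2-torsion isogenies available because the torsion subgroup is $(\ZZ/2)^{2}$. An isogeny $E_{1}\to E_{1}'$ of degree $2$ commuting with $-1$ induces a rational map $\operatorname{Km}(E_{1}\times E_{2})\dashrightarrow\operatorname{Km}(E_{1}'\times E_{2})$. To get birationality I will pair it with the dual isogeny on the second factor, or check that both Kummer quotients are quotients of a common fiber product by the same group. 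For the chains of length three, four and five, I will compose pairwise maps.

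The main obstacle is the birationality part. The Kummer structure is existence plus bookkeeping, but proving that two double octics are birational requires an explicit base-compatible identification of both elliptic surfaces simultaneously, including the $\lambda$- and $j$-parameters and not just the fiber types. For the harder cases I would first try to write down the explicit Weierstrass forms, in the spirit of Theorems~\ref{thm:ellfive} and~\ref{thm:ellfour}, and find the Möbius transformation of the base directly. Where only an isogeny exists, I would fall back on the argument via a quotient of a common fiber product.
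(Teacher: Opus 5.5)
Your treatment of the Kummer half is the paper's argument. The 238 listed tables are exactly those containing two disjoint quadruples (389 pairs in total). These quadruples give points $A\neq B$, since $A=B$ would put all eight planes through one point. Every plane then passes through $A$ or $B$, and the pencil of planes through $\overline{AB}$ gives $\operatorname{Km}(E_{1}\times E_{2})$ fiberwise. Your explicit model $u^{2}=G_{\lambda}H_{\lambda}$ also covers fivefold points, because a plane through $\overline{AB}$ restricts to a multiple of $x$ and only contributes a constant after dehomogenising. There is a small slip: with $A=(0:0:0:1)$ the planes through $A$ are $F_{i}(x,y,z)$, not $F_{i}(x,y,t)$, but this only swaps the roles of $A$ and $B$. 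Your primary route to the birationalities is also what the paper relies on: both double octics carry Kummer fibrations built from the same pair of rational elliptic surfaces. Examples are the two geometric origins of an $I_{0}^{*}$ fiber for Nos.~32 and~69, and the two models of $S_{6}$ for Nos.~261 and~264.

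The gap is your fallback: ``when the fiber types differ, use a fiberwise isogeny (or a pullback).'' This cannot produce a birational map.
\begin{itemize}
\item A fiberwise $2$-isogeny $S\to S''$ commuting with $-1$ induces a dominant rational map $(S\times_{\PP^{1}}S')/\iota\dashrightarrow(S''\times_{\PP^{1}}S')/\iota$ of degree $2$.
\item Pairing it with an isogeny on the second factor, dual or not, gives degree $4$.
\item A common base-change pullback also gives degree $2$.
\end{itemize}
The problem is not an artefact of the construction, because even the fibers are not birational. For generic $E_{1},E_{2}$ the Hodge isometry class of $T(E_{1}\times E_{2})\cong U\oplus U$ recovers the unordered pair $\{E_{1},E_{2}\}$. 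Since $T(\operatorname{Km}A)=T(A)(2)$, the surfaces $\operatorname{Km}(E_{1}\times E_{2})$ and $\operatorname{Km}(E_{1}/\langle P\rangle\times E_{2}/\langle Q\rangle)$ are non-isomorphic, hence non-birational, K3 surfaces. In the paper, isogenies and pullbacks are used only to produce correspondences explaining equal Picard--Fuchs operators, never birationalities. Your alternative, ``quotients of a common fiber product by the same group,'' just reduces to the isomorphic case.

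So for every listed tuple you must show that the two pairs of elliptic surfaces are isomorphic simultaneously over a single automorphism of the base. You may allow a common quadratic twist of both factors, since that leaves the Kummer quotient unchanged. You must also match the moduli $\lambda,\mu$ and, for $S_{1}$, the $j$-invariant, because fiber types and positions alone do not determine $S_{1}$, $S_{5}$ or $S_{6}$. Any tuple for which you only find an isogeny or pullback relation remains unproved by your plan.
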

\begin{remark}
	The type of singular fibers for a fiber product need not be the same for all smooth Calabi--Yau threefolds in a family.
\end{remark}

\begin{exmp}
	Consider the arrangement no. 4 given bye the configuration of eight planes
	\[tx(x+t)y(y+t)z(z+t)\left(A_{0}x+A_{1}y-(A_{0}+A_{1})z \right) =0.\]
	Two ``opposite'' fourfold points are given by incidences quadruples
	\[(1,6,7,8), (2,3,4,5).\]
	By the substitution $y=y+\frac{A_{0}}{A_{1}}x$ and reordering we get two elliptic surfaces given by a resolution of singularities of double coverings of the projective plane $\PP^{2}$ branched along unions of four lines
	\[u^{2} = x(x+t)(A_{0}x-A_{1}y)(A_{0}x-A_{1}y-A_{1}t), \quad
	v^{2} = zt(z+t)(A_{1}y-(A_{0}+A_{1})z).\]
	Consequently, this double octic is birational to the Kummer fibration of a fiber product of the following type

	\[\displaystyle
	\begin{array}[t]{ccccc}
		\infty&0&-1&\frac {A_{0}}{A_{1}}&-\frac{A_{0}+A_{1}}{A_{1}}\\[1mm]
		\hline\wl4 &\wl1& \wl1 &-&- \\ \wl2 &\wl1&\wl1&\wl1&\wl1\\[-3mm]
		&\multicolumn{2}{c}{\rule{.5pt}{2mm}\rule{1.3cm}{.5pt}\rule{.5pt}{2mm}}
		&\multicolumn{2}{c}{\rule{.5pt}{2mm}\rule{1.3cm}{.5pt}\rule{.5pt}{2mm}}
	\end{array}
	\]
	This presentation is valid only for $A_{0},A_{1}$ such that $\frac {A_{0}}{A_{1}}\not=\frac{A_{0}+A_{1}}{A_{1}}$, i. e. $A_{1}\not= 0,-2A_{0}$.
	For $A_{1}=0$ we get a singular member of the family. On the other hand for $A_{1}=-2A_{0}$ we get a smooth element of the family, with different type of the Kummer fibration
	\[\displaystyle
	\begin{array}[t]{cccc}
		\infty&0&-1&-\frac 12\\[1mm]
		\hline\wl4 &\wl1& \wl1 &- \\ \wl2 &\wl1&\wl1&\wl2\\[-3mm]
	\end{array}
	\]
	In particular the types of elliptic fibrations are not equal for all elements of a family, and consequently, they cannot be deduced from the incidence table alone.
\end{exmp}

\subsection{K3 fibrations by double sextics}

For any pair $P_{i}, P_{j}$, $i\not=j$ of arrangement planes the planes through the double or triple line $L=P_{i}\cap P_{j}$ determine a K3 fibration of the double octic. Namely, for a plane $P_{(s_{0}:s_{1})}$ defined by the polynomial $s_{0}F_{i}+s_{1}F_{j}$, where $F_{i}$ and $F_{j}$ are equations of planes $P_{i}$ and $P_{j}$ respectively, the fiber is given by the double cover of $P_{s_{0}:s_{1}}$ branched along a union of six lines
\[l_{k}:=P_{(s_{0}:s_{1})}\cap P_{k}, \quad k\not\in\{i,j\}.\]
Double cover of the projective plane branched along a smooth curve of degree 6 is a K3 surface. If the branch curve is singular (possibly reducible) but has at worst Kleinian singularities, then the double cover has also Kleinian singularities (of the same type) and its minimal resolution of singularities is a K3 surface.
This assumption is satisfied in particular, when the branch curve is a union of six lines, with at most three intersecting at a point (for details see  \cite{Persson}).
Since for a generic choice of $(s_{0}:s_{1})\in\PP^{1}$ the lines $l_{k}$ are all distinct and no four of them intersect, the double cover has a resolution of singularities which is a K3 surface.

To get an explicit equation of the K3 fibration assume that $F_{7}=z$, $F_{8}=t$ and the line $L$ is given by $z=t=0$. For any $s\in\CC$ the plane $P_{(-s:1)}$ is given by the equation $t=sz$.

\begin{theorem}
	A double octic Calabi--Yau threefold defined by the octic arrangement
	\[D:=\{(x,y,z,t)\in\PP^{3}:P_{1}(x,y,z,t)\cdots P_{6}(x,y,z,t)zt=0\}\]
	is birational to the K3 fibration
	\[u^{2}=F_{1}(x,y,z,sz) \cdots F_{6}(x,y,z,sz)\]
\end{theorem}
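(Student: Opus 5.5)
The approach is to realize the pencil of planes through $L=P_{7}\cap P_{8}=\{z=t=0\}$ as a morphism, and then read off the branch divisor of the double octic on the resulting $\PP^{2}$-bundle over $\PP^{1}$. Concretely, blow up $\PP^{3}$ along $L$. Since the resolution of \cite{CSz} blows up double and triple lines anyway, $\operatorname{Bl}_{L}\PP^{3}$ is dominated by $\tilde\PP$. The projection $(x:y:z:t)\mapsto(z:t)=(1:s)$ extends to a morphism $\operatorname{Bl}_{L}\PP^{3}\lra\PP^{1}$. This morphism is a $\PP^{2}$-bundle: the fiber over $s$ is the plane $P_{(-s:1)}=\{t=sz\}$, with homogeneous coordinates $(x:y:z)$.

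The main step is to compute the branch divisor on this bundle. Pull back $D$ along $\operatorname{Bl}_{L}\PP^{3}\lra\PP^{3}$. The multiplicity of $D$ along $L$ is $2$, which is even, so the exceptional divisor does not occur in the reduced branch locus. What remains is the six strict transforms $\tilde P_{1},\dots,\tilde P_{6}$ together with $\tilde P_{7}=\{s=0\}$ and $\tilde P_{8}=\{s=\infty\}$, which are two fibers of the bundle. Restricting to a fiber $t=sz$ gives $F_{k}|_{t=sz}=F_{k}(x,y,z,sz)$ for $k\le 6$. The factor $zt$ only contributes $P_{7}$ and $P_{8}$, so it changes nothing on a generic fiber. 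For generic $s$ the six lines $F_{k}(x,y,z,sz)=0$ are distinct and no four are concurrent, as argued just before the statement. The generic fiber is therefore the double sextic $u^{2}=\prod_{k\le6}F_{k}(x,y,z,sz)$, whose minimal resolution is a K3 surface \cite{Persson}.

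To make the birational identification explicit, I would write the double cover over the $\PP^{2}$-bundle using the homogeneous coordinate $(s_{0}:s_{1})$ on the base, with $u$ a section of the appropriate line bundle. The fibers $s=0$ and $s=\infty$ are then absorbed into the twisting of that bundle rather than appearing as a factor in the fiber equation. I would then check that this double cover agrees with the double octic on the dense open set where $zt\neq0$ and all the $F_{k}$ are nonzero. This gives the birational map to the fibration $u^{2}=F_{1}(x,y,z,sz)\cdots F_{6}(x,y,z,sz)$ over $\PP^{1}$.

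The step I expect to be the main obstacle is exactly this bookkeeping of the factor $zt$. A naive substitution $t=sz$ in affine coordinates gives $u^{2}=s\,z^{2}\prod F_{k}(x,y,z,sz)$. After setting $u'=u/z$, a factor $s$ remains, and it cannot be removed by rescaling $(x,y,z,u')$ with $s$ fixed. So the identification has to be made with the fibration read as a double cover of the $\PP^{2}$-bundle with twisted $u$, as described above, and not as the literal affine equation in the variables $(x,y,z,s)$. I would first check carefully which line bundle $u$ must be a section of for the branch divisor to be $\sum_{k\le6}\tilde P_{k}$ plus the two boundary fibers, and then confirm that this matches the intended meaning of the displayed equation before asserting the birational map.
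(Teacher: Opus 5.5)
Your set-up is correct, and it is the paper's argument made global. The paper only restricts $D$ to a plane $\{t=sz\}$ through $L$, on which $P_7$ and $P_8$ both cut out $L$, and reads off a double cover branched along the six lines $F_k(x,y,z,sz)=0$. Your computation on $W:=\operatorname{Bl}_L\PP^3$ says the same: $E$ drops out because $D$ has multiplicity $2$ along $L$, and $\tilde P_7,\tilde P_8$ become the fibres over $s=\infty$ and $s=0$.

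\textbf{The gap} is in your third paragraph: the two fibres cannot be ``absorbed into the twisting''. A double cover $u^2=f$ is branched exactly along $\operatorname{div}(f)$. On $W$ write $z=s_0w$, $t=s_1w$ and $\tilde F_k:=F_k(x,y,s_0w,s_1w)$. Any double cover of $W$ branched along $\tilde P_1+\dots+\tilde P_6$ plus possibly some fibres has the form $u^2=q(s_0,s_1)\prod_{k\le6}\tilde F_k$ for a binary form $q$. The fibres over $0$ and $\infty$ lie in the branch locus only if $s_0s_1$ divides $q$ to odd order. Up to squares this is $u^2=s_0s_1\prod\tilde F_k$, i.e.\ $u^2=s\prod_{k\le6}F_k(x,y,z,sz)$, which is not the displayed equation. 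So the check you postpone to the end comes out negative.

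In fact the statement cannot be proved as printed. Let $H$ be the pullback of a plane and $\Lambda=H-E$ the fibre class; then $K_W=-4H+E=-3H-\Lambda$.
\begin{itemize}
\item The double octic is the cover with $u$ a section of $3H+\Lambda$, so $K=\pi^*(K_W+3H+\Lambda)=0$.
\item The displayed equation, homogenised on $W$, is $u^2=\prod_{k\le6}\tilde F_k$ with $u$ a section of $3H$. This is a normal Gorenstein threefold with $K=\pi^*(-\Lambda)$, so every plurigenus of every resolution vanishes.
\end{itemize}
Since Kodaira dimension is a birational invariant, the two threefolds are not birational at all, fibred or not.

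They are quadratic twists of each other by $s$. Fibre by fibre over $s\in\CC^*$ they are isomorphic: put $u=\sqrt{s}\,z\,v$ in your $u^2=sz^2\prod F_k(x,y,z,sz)$. As families they are not isomorphic. Consistently, the fibres of the true fibration over $s=0,\infty$ contain the doubled preimages of $\tilde P_8,\tilde P_7$ rather than being double sextics.

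What your argument, like the paper's fibrewise one, actually proves is one of two things. Either the double octic is birational to $u^2=s\,F_1(x,y,z,sz)\cdots F_6(x,y,z,sz)$, or each fibre over $s\neq0,\infty$ is birational to the displayed double sextic. You should conclude in one of these forms rather than try to match the printed equation.
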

The fiber at infinity is the double sextic
\[u^{2}=F_{1}(x,y,0,t)\cdots F_{6}(x,y,0,t).\]

Arrangement double and triple lines disjoint from $L$ yield double and triple points of the
branch curve in a generic fiber, similarly - fourfold and fivefold points lying on $L$. The special fibers depend on double and triple lines intersecting $L$ and fourfold and fivefold points not lying on $L$.
Explicit description of general and singular fibers was given by P. Borówka, who studied in detail the case of rigid double octics  (\cite{Bor}).

\subsection{K3 fibrations by double quadrics}

A pair of skew arrangement double or triple lines determines a K3 fibration of a double octic. The lines $L_{ij}=P_{i}\cap P_{j}$ and $L_{kl}=P_{k}\cap P_{l}$, $\#\{i,j,k,l\}=4$, are skew exactly when $P_{i}\cap P_{j}\cap P_{k}\cap P_{l}=\emptyset$. The base locus of the quadric pencil $Q_{(\lambda_{0}:\lambda_{1})}$, $(\lambda_{0}:\lambda_{1})\in\PP^{1}$ defined by quadratic forms $\lambda_{0}F_{i}F_{k}-\lambda_{1}F_{j}F_{l}$
is the union of four lines $L_{ij}$, $L_{il}$, $L_{kj}$ and $L_{kl}$. In the resolution process we blow-up these lines, consequently we get a fibration on the double octic. The generic fiber $S_{(\lambda_{0}:\lambda_{1})}$ is a resolution of the double cover of the quadric $G_{\lambda_{0}:\lambda_{1}}$ branched along the intersection of $Q_{(\lambda_{0}:\lambda_{1})}$ with the octic arrangement.
The intersection equals
\[S_{(\lambda_{0}:\lambda_{1})}\cdot D = 2L_{ij}+2L_{il}+2L_{kj}+2L_{kl}+\sum_{m\not\in\{i,j,k,l\}} Q_{(\lambda_{0}:\lambda_{1})}\cap P_{m}.\]
Consequently, the branch locus
\[
S_{(\lambda_{0}:\lambda_{1})}\cdot D - (2L_{ij}+2L_{il}+2L_{kj}+2L_{kl}) = \sum_{m\not\in\{i,j,k,l\}} Q_{(\lambda_{0}:\lambda_{1})}\cap P_{m}.
\]
is a curve of bi-degree $(4,4)$, while the canonical divisor of a quadric has bi-degree $(-2,-2)$. Consequently, the resolution of the double cover of $Q_{(\lambda_{0}:\lambda_{1})}$ is a K3 surface.

The quadric $Q_{(\lambda_{0}:\lambda_{1})}$ is isomorphic to $\PP^{1}\times \PP^{1}$, an explicit isomorphism can be given by letting

\[\lambda_{0} p_{0}q_{0}=F_{i}, \ \ \lambda_{1} p_{1}q_{1}=F_{k}, \ \ \lambda_{0}p_{0}q_{1}=F_{j}, \ \ \lambda_{0}p_{1}q_{0}=F_{l}.\]
The assumption that lines $L_{ij}$ and $L_{kl}$ are disjoint implies that the linear map $(F_{i},F_{j},F_{k},F_{l})$ is an isomorphism. Consequently, the above system of equations has a unique solution in $x,y,z,t$. Substituting the solution into double octic, yields a family of double quadrics.
To simplify computations we assume that the double lines equal  $x=y=0$, $z=t=0$ and put $\lambda_{0}=1, \lambda_{1}=\lambda$.

\begin{theorem}[\mbox{\cite{CK-C2}[Thm.~4.3]}]
	The double octic
	\[u^{2} = xyztF_{5}(x,y,z,t)\dots F_{8}(x,y,z,t)\]
	is birational to a double quadrics fibration
	\[u^{2} = \lambda F_{5}(p_{0}q_{0}, p_{0}q_{1}, p_{1}q_{0}, \lambda p_{1}q_{1}) \cdots F_{8}(p_{0}q_{0}, p_{0}q_{1}, p_{1}q_{0}, \lambda p_{1}q_{1}).\]
\end{theorem}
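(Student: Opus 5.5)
The plan is to realize the pencil of quadrics through the four lines $x=y=0$, $z=t=0$, $x=z=0$, $y=t=0$ explicitly and pull the equation back along a birational parametrization. With the double lines $L_{12}=\{x=y=0\}$ and $L_{34}=\{z=t=0\}$, I would use the pencil $Q_\lambda=\{\lambda xt - yz=0\}$. Its base locus is the four lines $\{x=y=0\}$, $\{z=t=0\}$, $\{x=z=0\}$, $\{y=t=0\}$, and all four are arrangement double lines. Each smooth member $Q_\lambda$, $\lambda\neq0,\infty$, is identified with $\PP^1\times\PP^1$ by
\[(p_0:p_1)\times(q_0:q_1)\longmapsto (x,y,z,t)=(p_0q_0,\;p_0q_1,\;p_1q_0,\;\lambda p_1q_1),\]
since then $yz=p_0p_1q_0q_1$ and $xt=\lambda p_0p_1q_0q_1$. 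I will check that this lies on $Q_\lambda$ for the chosen normalization of the pencil. The total map
\[(\lambda,(p_0:p_1),(q_0:q_1))\mapsto(p_0q_0:p_0q_1:p_1q_0:\lambda p_1q_1)\]
is a birational map $\PP^1\times\PP^1\times\PP^1\dashrightarrow\PP^3$. Its inverse is $\lambda = xt/(yz)$, $(p_0:p_1)=(x:z)$, $(q_0:q_1)=(x:y)$, and it is defined on the open set where $xyzt\neq0$.

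Next I substitute into $u^2=xyzt\,F_5\cdots F_8$. The product $xyzt$ becomes $\lambda(p_0p_1q_0q_1)^2$. The square factor $(p_0p_1q_0q_1)^2$ is absorbed by rescaling $u\mapsto p_0p_1q_0q_1\,u$, which is birational on the double cover. What remains is
\[u^2=\lambda\,F_5(p_0q_0,p_0q_1,p_1q_0,\lambda p_1q_1)\cdots F_8(p_0q_0,p_0q_1,p_1q_0,\lambda p_1q_1).\]
This is bihomogeneous of bidegree $(4,4)$ in $(p,q)$, as expected from the general discussion of double quadrics. The result is therefore the asserted double-quadric fibration, birational to the double octic because its base change is birational.

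Finally I would confirm that the generic fibre is K3, so that the map really is a fibration structure and the statement is meaningful. The four planes $P_5,\dots,P_8$ meet a generic $Q_\lambda$ in four $(1,1)$-curves, which together form a $(4,4)$ branch curve with only nodes. This holds because no arrangement line other than the base lines lies on a generic member.

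The main obstacle is bookkeeping rather than ideas. One issue is getting the pencil and parametrization consistent with the labelling of the lines in the statement; if needed I would swap the roles of $y$ and $z$ and rename the coordinates. The other is ensuring that the factor $\lambda$ left over after removing squares is written correctly, since it yields a quadratic twist in $\lambda$ and would change the fibration if dropped.
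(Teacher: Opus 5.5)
Your proposal is correct and is essentially the paper's argument: parametrize each member of the pencil of quadrics through the two skew double lines by $\PP^1\times\PP^1$ via $(x,y,z,t)=(p_0q_0,p_0q_1,p_1q_0,\lambda p_1q_1)$, substitute into $xyzt\,F_5\cdots F_8$, and absorb the square $(p_0p_1q_0q_1)^2$ into $u$. There are two cosmetic slips, and neither affects the birationality: your parametrization and your inverse $\lambda=xt/(yz)$ describe the members $\{xt-\lambda yz=0\}$, not $\{\lambda xt-yz=0\}$; and the side claim that the branch curve on a generic fibre has only nodes is too strong, since for instance a fivefold point on a base line gives a triple point on every fibre.
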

In fact four non-intersecting arrangement planes define three pairs of disjoint arrangement double or triple lines, they are base loci of the following pencils of quadrics
\[\lambda_{0}xt-\lambda_{1}yz=0, \quad \lambda_{0}xz-\lambda_{1}yt=0, \quad\text{and} \quad \lambda_{0}xy-\lambda_{1}zt=0.\]
Consequently, the double octic
	\[u^{2} = xyztF_{5}(x,y,z,t)\dots F_{8}(x,y,z,t)\]
is birational also to the following fibrations
\[u^{2} = \lambda F_{5}(p_{0}q_{0}, p_{0}q_{1}, \lambda p_{1}q_{1}, p_{1}q_{0}) \cdots F_{8}(p_{0}q_{0}, p_{0}q_{1}, \lambda p_{1}q_{1}p_{1}q_{0})\]
and
\[u^{2} = \lambda F_{5}(p_{0}q_{0}, \lambda p_{1}q_{1}, p_{0}q_{1}, p_{1}q_{0}) \cdots F_{8}(p_{0}q_{0}, \lambda p_{1}q_{1}, p_{0}q_{1}, p_{1}q_{0}).\]
If moreover one of the two skew-lines is an arrangement triple line, we get 9 fibrations, if both lines have multiplicity 3, we get 27 fibrations.

Fibrations by double quadrics are  more difficult to classify then fibrations by Kummer surfaces or double sextics. A basic invariant of a family of K3 surfaces is given by the Picard-Fuchs operator, which is a differential operator annihilating the period integral (\cite{CvS}). Unfortunately, the Picard-Fuchs operators of families of K3 surfaces do not have as good properties as in the case of Calabi--Yau threefolds. In particular, we do not have a nice classification of the possible Riemann symbols. Nevertheless, a birational isomorphism of two fibrations results in equal (up to a pullback) Picard-Fuchs operators. The opposite implication also is expected to hold true.

A singular point of the branch divisor determines an elliptic fibration on the double quadric. Checking a fibration preserving isomorphism of two elliptically fibered K3 surfaces is an elementary linear algebra.
In \cite{CK-C2} we use the above simple observation to find a birational transformation between the following families of double octic.

        \begin{eqnarray*}
	X^{35}_{A_{0}:A_{1}}&:&\
	u^{2}=(x - y)xy(y + t)(x + t)z\left(A_{0}x + A_{1}y + (-A_{0} + A_{1})z + A_{1}t\right)(z + t)\\
	X^{71}_{A_{0}:A_{1}}&:&\
	u^{2}=xy(x + y)\left(A_{0}x + A_{1}y + A_{1}z\right)zt
	\left((-A_{0} + A_{1})x + A_{1}y + A_{1}t\right)(x + y + z + t)
\end{eqnarray*}

\begin{theorem}
	The following map is birational
	\begin{multline*}
		\mathcal{X}^{35}\ni(x,y,z,t,u,A_{0},A_{1}) \longmapsto
		\Bigl(
		-(y-z)(x-y)x(A_{0}-A_{1}),\\ -(x-y)((A_{0}-A_{1})xz-(A_{0}-A_{1})yz+A_{1}ty+A_{1}y^2),\\ (y-z)(x-y)(A_{0}x-(A_{0}-A_{1})z+A_{1}t+A_{1}y),\\ (x+t)(A_{0}xz-A_{0}yz+A_{1}ty-A_{1}xy+A_{1}
		y^2+A_{1}yz),\\ (A_{0}-A_{1})\left(\frac{A_{0}-A_{1}}{A_{0}}\right)^{1/2}
		(A_{0}x-A_{0}z+A_{1}t-A_{1}x+A_{1}y+A_{1}z)\\
		(x-y)^2(y-z)(A_{0}xz-A_{0}yz+A_{1}ty-A_{1}xz+A_{1}y^2+A_{1}yz)\\
		(A_{0}xz-A_{0}yz+A_{1}ty-A_{1}xy+A_{1}y^2+A_{1}yz), A_{0}, A_{0}-A_{1}
		\Bigr)\in\mathcal{X}^{71}
	\end{multline*}
	The inverse birational map is given by
	\begin{multline*}
		(x,y,z,t,u,A_{0},A_{1}) \longmapsto
		\Bigl(
		x\bigl(A_1(A_1-A_0)(x^2+xz+tx+ty)+\\(A_{0}^2-3A_0A_1+3A_1^2)xy+A_1(2A_1-A_{0})y^2+A_{1}^2yz\bigr),\\
		-(x+y)A_{1}((A_0-A_1)(x^2+xz+tx)-A_{1}(xy+yz)),\\ -(A_{0}x+A_{1}y+A_{1}z)((A_{0}-A_1)(x^2+xz+tx)-A_{1}(xy+yz)),\\
		A_1(A_0-A_1)(x^3+x^2z+tx^2+2txy)+(3A_{0}A_{1}-A_{0}^2-3A_{1}^2)x^2y+\\
		A_1(A_{0}-2A_1)xy^2+
		A_{1}^2(tyz-xyz), \ A_{0}, \ A_{0}-A_{1},\\
		A_{1}\sqrt{A_{0}A_{1}}y\bigl((A_{0}-A_{1})x+A_{1}z\bigr)^2
		((A_{0}-A_1)x(x+z+t)-A_{1}(x+z)y)\times\\
		\bigl(A_1(A_1-A_{0})(x^2+xz+tx+ty)+(A_{0}^2-3A_0A_1+3A_1^2)xy+A_1(2A_{1}-A_{0})y^2+\\A_{1}^2yz\bigr)(A_{0}x-A_{1}x-A_{1}y)
		\Bigr)
	\end{multline*}
\end{theorem}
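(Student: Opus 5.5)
The theorem is a statement about explicit rational maps, so I will verify it directly. There are three things to check. First, the forward map $\Psi$ sends $\mathcal X^{35}$ into $\mathcal X^{71}$ fibrewise, covering the parameter map $(A_0:A_1)\mapsto(A_0:A_0-A_1)$. Second, the displayed inverse $\Psi'$ sends $\mathcal X^{71}$ into $\mathcal X^{35}$ over the inverse parameter map. Third, the two composites are the identity on a dense open set.

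Since both families are hypersurfaces $u^2=F$ in $\PP(1,1,1,1,4)$, it suffices to work on the singular models $Y$. Birationality of the resolutions then follows, because the resolution of \cite{CSz} is birational to $Y$.

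\textbf{Step 1 (forward map lands in the target).} Write $\Psi=(\Psi_x,\Psi_y,\Psi_z,\Psi_t,\Psi_u)$, where the first four components are homogeneous of a common degree $d$ in $x,y,z,t$ (here $d=3$) and $\Psi_u$ has degree $4d$. Denote by $F^{71}_{(A_0:A_0-A_1)}$ the octic of family No.~71 at the new parameter. I will substitute $(\Psi_x,\dots,\Psi_t)$ into each of its eight linear factors and factor every result over $\QQ(A_0,A_1)[x,y,z,t]$.

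The construction of the map guarantees how this should come out. The map was found by matching the double-quadric/elliptic fibrations of \cite{CK-C2}, so each factor pulls back to a product of a few recognisable polynomials. These are the arrangement forms of No.~35, the forms $x-y$ and $y-z$, and the two quadrics
\[q_1=A_0xz-A_0yz+A_1ty-A_1xz+A_1y^2+A_1yz,\qquad q_2=A_0xz-A_0yz+A_1ty-A_1xy+A_1y^2+A_1yz.\]
Multiplying the eight factors together, I expect to obtain
\[F^{71}\circ\Psi=c(A)\,F^{35}\cdot G^2,\]
where $G$ is an explicit product of these polynomials and $c(A)=(A_0-A_1)^3/A_0$ up to a square. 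Then $\Psi_u^2=c\,G^2F^{35}$, and on $Y^{35}$ this equals $c\,G^2u^2$. This is exactly why $\Psi_u$ equals $u\,G$ times $(A_0-A_1)\sqrt{(A_0-A_1)/A_0}$. In the displayed formula the factor $u$ is implicit in the weight-4 coordinate; I will read the formula that way and check the degrees to confirm it.

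\textbf{Step 2 (the inverse).} I will do the same computation for $\Psi'$, expecting $F^{35}\circ\Psi'=c'(A)\,F^{71}H^2$. Here $H$ is a product of the factors that appear in the last coordinate, and the scalar $\sqrt{A_0A_1}$ accounts for $c'$.

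\textbf{Step 3 (the composites are the identity).} It remains to show that $\Psi'\circ\Psi=\mathrm{id}$. On the $(x:y:z:t)$ part, I will compute $\Psi'_\bullet(\Psi(x,y,z,t))$, which has degree $9$, and check that it equals $K\cdot(x,y,z,t)$ for a single form $K$ of degree $8$. This is the same as checking that all $2\times2$ minors of the matrix with rows $(x,y,z,t)$ and the composite vanish. The $u$-coordinate then matches up to the sign $\pm1$, which corresponds to the covering involution and can be fixed by the choice of square root. Since the first composite is the identity, $\Psi$ is dominant and generically injective, so checking $\Psi'\circ\Psi=\mathrm{id}$ alone already gives birationality. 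I will nevertheless check the other composite as a sanity check. I will also note that the parameter maps $A\mapsto(A_0,A_0-A_1)$ are mutually inverse involutions on $\PP^1$, up to relabelling.

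\textbf{Main obstacle.} The expected difficulty is the polynomial bookkeeping in Step 3, with composites of degree 9 in four variables and two parameters, together with keeping track of the square-root constants and any misprints in the displayed formulas (for example, the ordering of the parameters and the appearance of $A_1$ versus $A_0-A_1$). I would do this in Magma, working over the function field $\QQ(A_0,A_1)$ adjoined with the needed square root. If the naive identity fails, I would first correct the normalisation by rescaling $u$, since for these families a fibrewise isomorphism is determined by the $\PP^3$-part up to such a scalar.
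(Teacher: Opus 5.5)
Your plan is correct and matches the paper's proof, which consists only of the remark that the formulas can be checked by direct computation in Maple, with their derivation via fibrations given in \cite{CK-C2}. Your caution about normalisations is justified: one gets $F^{71}_{(A_0:A_0-A_1)}\circ\Psi=(A_0-A_1)^3F^{35}G^2$ with $G=(x-y)^2(y-z)\,m\,q_1q_2$ and $m=(A_0-A_1)(x-z)+A_1(y+t)$, so the displayed constant $(A_0-A_1)\sqrt{(A_0-A_1)/A_0}$ is right only after normalising $A_0=1$ (otherwise it is off by a harmless factor $\sqrt{A_0}$), while on $(x:y:z:t)$ the composite $\Psi'\circ\Psi$ is multiplication by $-(A_0-A_1)^2G$, i.e.\ the identity.
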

\begin{proof}
	This theorem can be proved by direct computations in Maple, in \cite{CK-C2} we instead described in full detail how the above formulas were derived.
\end{proof}

not previously observed in the Calabi--Yau setting.not previously observed in the Calabi--Yau setting.not previously observed in the Calabi--Yau setting.not previously observed in the Calabi--Yau setting.not previously observed in the Calabi--Yau setting.not previously observed in the Calabi--Yau setting.not previously observed in the Calabi--Yau setting.not previously observed in the Calabi--Yau setting.not previously observed in the Calabi--Yau setting.not previously observed in the Calabi--Yau setting.not previously observed in the Calabi--Yau setting.not previously observed in the Calabi--Yau setting.not previously observed in the Calabi--Yau setting.not previously observed in the Calabi--Yau setting.and $G_{i}$ is a constant. reordering if necessary the order of planes $P_{i}$, we can assume that $G_{1}=\dots=G_{4}=0$ and $G_{5}=0$ when $A$ is a fivefold point and $G_{5}\not=0$ when $A$ is fourfold.
\section{Equations and Data}\label{sec:EqsData}
In this section, we collect equations of octic arrangements resulting from the classification. To keep the length of the paper reasonable we do not present here other details, instead we include the magma code that enables easy computation of all data.
\subsection{Equations}
\leavevmode\small
\medskip

\medskip

\noindent\parbox[right]{9mm}{\textbf{1:}}\parbox{3mm}{\hfill}%
\parbox[t]{15cm}{\((x - y)xy(x + t)(y + t)(x - z)z(z + t)\)}

\noindent\parbox[right]{9mm}{\textbf{2:}}\parbox{3mm}{\hfill}%
\parbox[t]{15cm}{\(xy(A_{0}x + A_{1}y)(x + z)z(y + t)t(x + y + z + t)\)}

\noindent\parbox[right]{9mm}{\textbf{3:}}\parbox{3mm}{\hfill}%
\parbox[t]{15cm}{\(tx(x + t)y(y + t)z(z + t)(x + y - z)\)}

\noindent\parbox[right]{9mm}{\textbf{4:}}\parbox{3mm}{\hfill}%
\parbox[t]{15cm}{\(tx(x + t)y(y + t)z(z + t)(A_{0}x + A_{1}y + (-A_{0} - A_{1})z)\)}

\noindent\parbox[right]{9mm}{\textbf{5:}}\parbox{3mm}{\hfill}%
\parbox[t]{15cm}{\(tx(x + t)y(y + t)z(z + t)(A_{0}x + A_{1}y - A_{1}z)\)}

\noindent\parbox[right]{9mm}{\textbf{6:}}\parbox{3mm}{\hfill}%
\parbox[t]{15cm}{\(tx(x + t)y(y + t)z(z + t)(A_{0}x + A_{1}y + A_{2}z)\)}

\noindent\parbox[right]{9mm}{\textbf{7:}}\parbox{3mm}{\hfill}%
\parbox[t]{15cm}{\(xy(A_{0}x + A_{1}y)(A_{0}x + A_{2}z)zt(A_{0}x + A_{3}t)(x + y + z + t)\)}

\noindent\parbox[right]{9mm}{\textbf{8:}}\parbox{3mm}{\hfill}%
\parbox[t]{15cm}{\(x(x - y)yz(A_{0}x + A_{1}z)(x + t)(y + t)(z + t)\)}

\noindent\parbox[right]{9mm}{\textbf{9:}}\parbox{3mm}{\hfill}%
\parbox[t]{15cm}{\(xy(A_{0}x + A_{1}y)z(A_{0}x + A_{2}z)(y + t)t(x + y + z + t)\)}

\noindent\parbox[right]{9mm}{\textbf{10:}}\parbox{3mm}{\hfill}%
\parbox[t]{15cm}{\(xy(x + y)z(A_{0}x + A_{1}z)t(A_{0}y + A_{1}t)(x + y + z + t)\)}

\noindent\parbox[right]{9mm}{\textbf{11:}}\parbox{3mm}{\hfill}%
\parbox[t]{15cm}{\(xy(x + y)z(A_{0}x + A_{1}z)t(A_{0}y + A_{2}t)(x + y + z + t)\)}

\noindent\parbox[right]{9mm}{\textbf{12:}}\parbox{3mm}{\hfill}%
\parbox[t]{15cm}{\(xy(A_{0}x + A_{1}y)(A_{0}x + A_{2}z)zt(A_{1}y + A_{3}t)(x + y + z + t)\)}

\noindent\parbox[right]{9mm}{\textbf{13:}}\parbox{3mm}{\hfill}%
\parbox[t]{15cm}{\(x(A_{0}x + A_{1}t)(x + t)y(y + t)z(z + t)(y - z)\)}

\noindent\parbox[right]{9mm}{\textbf{14:}}\parbox{3mm}{\hfill}%
\parbox[t]{15cm}{\(xy(A_{0}x + A_{1}y)(x + z)zt(x + y + z + t)(A_{0}x + A_{0}y + A_{0}z + A_{2}t)\)}

\noindent\parbox[right]{9mm}{\textbf{15:}}\parbox{3mm}{\hfill}%
\parbox[t]{15cm}{\(xy(A_{0}x + A_{1}y)z(A_{0}x + A_{2}z)(A_{0}x + A_{0}y + A_{0}z + A_{3}t)t(x + y + z + t)\)}

\noindent\parbox[right]{9mm}{\textbf{16:}}\parbox{3mm}{\hfill}%
\parbox[t]{15cm}{\((x - y)xy(x + t)(y + t)(A_{0}x + A_{1}y + (-A_{0} - A_{1})z)z(z + t)\)}

\noindent\parbox[right]{9mm}{\textbf{17:}}\parbox{3mm}{\hfill}%
\parbox[t]{15cm}{\(xy(A_{0}x + A_{1}y)z(A_{0}x + A_{2}z)(x + t)(y + t)(z + t)\)}

\noindent\parbox[right]{9mm}{\textbf{18:}}\parbox{3mm}{\hfill}%
\parbox[t]{15cm}{\(x(A_{0}x + A_{1}y)yz(A_{0}x + A_{2}z)(A_{0}x + A_{3}y + A_{3}t)t(x + y + z + t)\)}

\noindent\parbox[right]{9mm}{\textbf{19:}}\parbox{3mm}{\hfill}%
\parbox[t]{15cm}{\((x - y)xy(x + t)(y + t)z(x + y - z)(z + t)\)}

\noindent\parbox[right]{9mm}{\textbf{20:}}\parbox{3mm}{\hfill}%
\parbox[t]{15cm}{\((x - y)xy(y + t)(x + t)(A_{0}x + A_{1}y - A_{1}z)z(z + t)\)}

\noindent\parbox[right]{9mm}{\textbf{21:}}\parbox{3mm}{\hfill}%
\parbox[t]{15cm}{\(xy(x + y)z(A_{0}x + A_{1}z)((-A_{0} + A_{1})x + A_{1}y + (-A_{0} + A_{1})t)t(x + y + z + t)\)}

\noindent\parbox[right]{9mm}{\textbf{22:}}\parbox{3mm}{\hfill}%
\parbox[t]{15cm}{\((A_{0}x + A_{1}y)xyz(A_{0}x + A_{1}y + A_{2}z)(x + t)(y + t)(z + t)\)}

\noindent\parbox[right]{9mm}{\textbf{23:}}\parbox{3mm}{\hfill}%
\parbox[t]{15cm}{\(xy(A_{0}x + A_{1}y)(x + z)zt(A_{0}x + A_{0}y + A_{2}t)(x + y + z + t)\)}

\noindent\parbox[right]{9mm}{\textbf{24:}}\parbox{3mm}{\hfill}%
\parbox[t]{15cm}{\(xy(A_{0}x + A_{1}y)z(x + z)(A_{0}x + A_{2}y + A_{0}t)t(x + y + z + t)\)}

\noindent\parbox[right]{9mm}{\textbf{25:}}\parbox{3mm}{\hfill}%
\parbox[t]{15cm}{\((x - y)xy(x + t)(y + t)z(A_{0}x + A_{1}y + A_{2}z)(z + t)\)}

\noindent\parbox[right]{9mm}{\textbf{26:}}\parbox{3mm}{\hfill}%
\parbox[t]{15cm}{\(xy(A_{0}x + A_{1}y)z((-A_{0} + A_{1})x + A_{1}z)(A_{0}x + A_{2}y + A_{0}t)t(x + y + z + t)\)}

\noindent\parbox[right]{9mm}{\textbf{27:}}\parbox{3mm}{\hfill}%
\parbox[t]{15cm}{\(xy(A_{0}x + A_{1}y)((-A_{0} + A_{2})x + A_{2}z)zt(A_{0}x + A_{2}y + A_{3}t)(x + y + z + t)\)}

\noindent\parbox[right]{9mm}{\textbf{28:}}\parbox{3mm}{\hfill}%
\parbox[t]{15cm}{\(xy(A_{0}x + A_{1}y)((-A_{2} + A_{3})x + A_{3}z)z(A_{2}x + A_{1}y + A_{3}t)t(x + y + z + t)\)}

\noindent\parbox[right]{9mm}{\textbf{29:}}\parbox{3mm}{\hfill}%
\parbox[t]{15cm}{\(xy(A_{0}x + A_{1}y)(A_{0}x + A_{2}z)zt(A_{0}x + A_{3}y + A_{4}t)(x + y + z + t)\)}

\noindent\parbox[right]{9mm}{\textbf{30:}}\parbox{3mm}{\hfill}%
\parbox[t]{15cm}{\((x - y)xy(x + t)(y + t)z(z + t)(A_{0}x + A_{1}z + A_{2}t)\)}

\noindent\parbox[right]{9mm}{\textbf{31:}}\parbox{3mm}{\hfill}%
\parbox[t]{15cm}{\(xy(A_{0}x + A_{1}y)(x + z)z(A_{0}x + A_{2}y + A_{0}z + A_{3}t)t(x + y + z + t)\)}

\noindent\parbox[right]{9mm}{\textbf{32:}}\parbox{3mm}{\hfill}%
\parbox[t]{15cm}{\((x - y)xy(y + t)(x + t)z(x - y - 2z - t)(z + t)\)}

\noindent\parbox[right]{9mm}{\textbf{33:}}\parbox{3mm}{\hfill}%
\parbox[t]{15cm}{\((x - y)xy(y + t)(x + t)z(A_{0}x - A_{0}y + A_{1}z - A_{0}t)(z + t)\)}

\noindent\parbox[right]{9mm}{\textbf{34:}}\parbox{3mm}{\hfill}%
\parbox[t]{15cm}{\((x - y)xy(y + t)(x + t)z(A_{0}x + A_{0}y + A_{1}z + A_{0}t)(z + t)\)}

\noindent\parbox[right]{9mm}{\textbf{35:}}\parbox{3mm}{\hfill}%
\parbox[t]{15cm}{\((x - y)xy(y + t)(x + t)z(A_{0}x + A_{1}y + (-A_{0} + A_{1})z + A_{1}t)(z + t)\)}

\noindent\parbox[right]{9mm}{\textbf{36:}}\parbox{3mm}{\hfill}%
\parbox[t]{15cm}{\(xy(A_{0}x + A_{1}y)z(x + z)t((A_{0} - A_{1})y + (A_{0} - A_{1})z + A_{0}t)(x + y + z + t)\)}

\noindent\parbox[right]{9mm}{\textbf{37:}}\parbox{3mm}{\hfill}%
\parbox[t]{15cm}{\(xy(A_{0}x + (A_{0} + A_{1})y)z(x + z)t(A_{1}y + A_{2}z - A_{0}t)(x + y + z + t)\)}

\noindent\parbox[right]{9mm}{\textbf{38:}}\parbox{3mm}{\hfill}%
\parbox[t]{15cm}{\(xy(A_{0}x + A_{1}y)z(x + z)t((A_{0} - A_{1})y + A_{0}z + A_{2}t)(x + y + z + t)\)}

\noindent\parbox[right]{9mm}{\textbf{39:}}\parbox{3mm}{\hfill}%
\parbox[t]{15cm}{\(xy(A_{0}x + A_{1}y)z(x + z)t(A_{0}y + A_{0}z + A_{2}t)(x + y + z + t)\)}

\noindent\parbox[right]{9mm}{\textbf{40:}}\parbox{3mm}{\hfill}%
\parbox[t]{15cm}{\((x - y)xy(y + t)(x + t)z(A_{0}x + A_{1}y + A_{2}z + A_{1}t)(z + t)\)}

\noindent\parbox[right]{9mm}{\textbf{41:}}\parbox{3mm}{\hfill}%
\parbox[t]{15cm}{\(xy(A_{0}x + A_{1}y)(x + z)zt(x + y + z + t)(A_{0}x + A_{1}y + A_{1}z + A_{2}t)\)}

\noindent\parbox[right]{9mm}{\textbf{42:}}\parbox{3mm}{\hfill}%
\parbox[t]{15cm}{\((x - y)xy(x + t)(y + t)z(z + t)(A_{0}x - A_{0}y + A_{1}z + A_{2}t)\)}

\noindent\parbox[right]{9mm}{\textbf{43:}}\parbox{3mm}{\hfill}%
\parbox[t]{15cm}{\(xy(A_{0}x + A_{1}y)(x + z)zt(x + y + z + t)(A_{0}x + A_{2}y + A_{2}z + A_{3}t)\)}

\noindent\parbox[right]{9mm}{\textbf{44:}}\parbox{3mm}{\hfill}%
\parbox[t]{15cm}{\(xy((A_{0} - A_{2})x + (A_{1} - A_{2})y)(x + z)zt(x + y + z + t)(A_{0}x + A_{1}y + A_{2}z + A_{3}t)\)}

\noindent\parbox[right]{9mm}{\textbf{45:}}\parbox{3mm}{\hfill}%
\parbox[t]{15cm}{\(xy(x + y)(x + z)zt(x + y + z + t)(A_{0}x + A_{1}y + A_{2}z + A_{3}t)\)}

\noindent\parbox[right]{9mm}{\textbf{46:}}\parbox{3mm}{\hfill}%
\parbox[t]{15cm}{\(xy(A_{0}x + A_{1}y)(x + z)zt(x + y + z + t)(A_{1}y + A_{2}z + A_{3}t)\)}

\noindent\parbox[right]{9mm}{\textbf{47:}}\parbox{3mm}{\hfill}%
\parbox[t]{15cm}{\(xy(A_{0}x + A_{1}y)(x + z)zt(x + y + z + t)(A_{0}x + A_{1}y + A_{2}z + A_{3}t)\)}

\noindent\parbox[right]{9mm}{\textbf{48:}}\parbox{3mm}{\hfill}%
\parbox[t]{15cm}{\(xy(A_{0}x + A_{1}y)(x + z)zt(x + y + z + t)(A_{0}x + A_{2}y + A_{3}z + A_{4}t)\)}

\noindent\parbox[right]{9mm}{\textbf{49:}}\parbox{3mm}{\hfill}%
\parbox[t]{15cm}{\(xy(A_{0}x + A_{1}y)z(A_{0}x + A_{2}z)(A_{0}x + A_{3}y + A_{0}z + A_{4}t)t(x + y + z + t)\)}

\noindent\parbox[right]{9mm}{\textbf{50:}}\parbox{3mm}{\hfill}%
\parbox[t]{15cm}{\(xy(A_{0}x + A_{1}y)z(A_{0}x + A_{2}z)(A_{0}x + A_{3}y + A_{3}z + A_{4}t)t(x + y + z + t)\)}

\noindent\parbox[right]{9mm}{\textbf{51:}}\parbox{3mm}{\hfill}%
\parbox[t]{15cm}{\(xy(A_{0}x + A_{1}y)z(A_{0}x + A_{2}z)(A_{0}x + A_{3}y + A_{4}z + A_{5}t)t(x + y + z + t)\)}

\noindent\parbox[right]{9mm}{\textbf{52:}}\parbox{3mm}{\hfill}%
\parbox[t]{15cm}{\((A_{0}x + A_{1}t)x(x + t)y(y + t)z(z + t)(A_{0}y + A_{2}z)\)}

\noindent\parbox[right]{9mm}{\textbf{53:}}\parbox{3mm}{\hfill}%
\parbox[t]{15cm}{\(xy(x + y)(A_{0}x + A_{1}y + A_{1}z)z(A_{0}x + A_{1}y + A_{1}z + A_{0}t)t(x + y + z + t)\)}

\noindent\parbox[right]{9mm}{\textbf{54:}}\parbox{3mm}{\hfill}%
\parbox[t]{15cm}{\(xy(A_{0}x + A_{1}y)(x + t)(y + t)z(A_{0}x + A_{2}z + A_{0}t)(z + t)\)}

\noindent\parbox[right]{9mm}{\textbf{55:}}\parbox{3mm}{\hfill}%
\parbox[t]{15cm}{\(xy(A_{0}x + A_{1}y)z(A_{0}x + A_{2}y + A_{2}z)(x + y + t)t(x + y + z + t)\)}

\noindent\parbox[right]{9mm}{\textbf{56:}}\parbox{3mm}{\hfill}%
\parbox[t]{15cm}{\(xy(A_{0}x + A_{1}y)(A_{0}x + A_{2}y + A_{2}z)z(A_{0}x + A_{2}y + A_{2}z + A_{3}t)t(x + y + z + t)\)}

\noindent\parbox[right]{9mm}{\textbf{57:}}\parbox{3mm}{\hfill}%
\parbox[t]{15cm}{\(xy(A_{0}x + A_{1}y)(A_{0}x + A_{2}y + A_{2}z)z(A_{0}x + A_{0}y + A_{0}z + A_{3}t)t(x + y + z + t)\)}

\noindent\parbox[right]{9mm}{\textbf{58:}}\parbox{3mm}{\hfill}%
\parbox[t]{15cm}{\(xy(A_{0}x + A_{1}y)z(A_{0}x + A_{0}y + A_{2}z)(A_{0}x + A_{0}y + A_{0}z + A_{3}t)t(x + y + z + t)\)}

\noindent\parbox[right]{9mm}{\textbf{59:}}\parbox{3mm}{\hfill}%
\parbox[t]{15cm}{\(xy(A_{0}x + A_{1}y)(A_{0}x + A_{3}y + A_{2}z)z((-A_{0} + A_{2})x + (A_{2} - A_{3})y + A_{2}t)t(x + y + z + t)\)}

\noindent\parbox[right]{9mm}{\textbf{60:}}\parbox{3mm}{\hfill}%
\parbox[t]{15cm}{\(xy(A_{0}x + A_{1}y)z(A_{0}x + A_{2}y + A_{2}z)t((A_{0} - A_{2})z + A_{0}t)(x + y + z + t)\)}

\noindent\parbox[right]{9mm}{\textbf{61:}}\parbox{3mm}{\hfill}%
\parbox[t]{15cm}{\(xy(A_{0}x + A_{1}y)(x + y + z)zt(x + y + z + t)(A_{0}x + A_{2}z + A_{3}t)\)}

\noindent\parbox[right]{9mm}{\textbf{62:}}\parbox{3mm}{\hfill}%
\parbox[t]{15cm}{\(xy(A_{0}x + A_{1}y)(x + y + z)zt(x + y + z + t)(A_{0}x + A_{2}y + A_{3}z + A_{4}t)\)}

\noindent\parbox[right]{9mm}{\textbf{63:}}\parbox{3mm}{\hfill}%
\parbox[t]{15cm}{\(xy(A_{0}x + A_{1}y)z(A_{0}x + A_{2}y + A_{3}z)(A_{0}x + A_{0}y + A_{0}z + A_{4}t)t(x + y + z + t)\)}

\noindent\parbox[right]{9mm}{\textbf{64:}}\parbox{3mm}{\hfill}%
\parbox[t]{15cm}{\(xy(A_{0}x + A_{1}y)zt(A_{0}z + A_{1}t)(x + y + z + t)(A_{0}x + A_{2}y + A_{0}z + A_{2}t)\)}

\noindent\parbox[right]{9mm}{\textbf{65:}}\parbox{3mm}{\hfill}%
\parbox[t]{15cm}{\(xy(A_{0}x + A_{1}y)zt(A_{0}z + A_{2}t)(x + y + z + t)(A_{0}x + A_{3}y + A_{0}z + A_{3}t)\)}

\noindent\parbox[right]{9mm}{\textbf{66:}}\parbox{3mm}{\hfill}%
\parbox[t]{15cm}{\(xy(A_{0}x + A_{1}y)z(A_{2}z + A_{3}t)t(x + y + z + t)(A_{0}x + A_{4}y + A_{2}z + A_{4}t)\)}

\noindent\parbox[right]{9mm}{\textbf{67:}}\parbox{3mm}{\hfill}%
\parbox[t]{15cm}{\(xy(A_{0}x + A_{1}y)z(A_{2}z + A_{3}t)t(x + y + z + t)(A_{0}x + A_{4}y + A_{2}z + A_{5}t)\)}

\noindent\parbox[right]{9mm}{\textbf{68:}}\parbox{3mm}{\hfill}%
\parbox[t]{15cm}{\(xy(A_{0}x + A_{1}y)(A_{0}x + A_{0}y + A_{2}z)zt(A_{0}x + A_{0}y + A_{3}t)(x + y + z + t)\)}

\noindent\parbox[right]{9mm}{\textbf{69:}}\parbox{3mm}{\hfill}%
\parbox[t]{15cm}{\(xy(-2x + y)(y + t)(x + t)(x - y + z)z(z + t)\)}

\noindent\parbox[right]{9mm}{\textbf{70:}}\parbox{3mm}{\hfill}%
\parbox[t]{15cm}{\(x(x + y - z)(A_{0}x + A_{1}y - A_{1}z)yz(y + t)(z + t)(x + t)\)}

\noindent\parbox[right]{9mm}{\textbf{71:}}\parbox{3mm}{\hfill}%
\parbox[t]{15cm}{\(xy(x + y)(A_{0}x + A_{1}y + A_{1}z)zt((-A_{0} + A_{1})x + A_{1}y + A_{1}t)(x + y + z + t)\)}

\noindent\parbox[right]{9mm}{\textbf{72:}}\parbox{3mm}{\hfill}%
\parbox[t]{15cm}{\(xy(A_{0}x + A_{1}y)(y + t)(x + t)(x + y - z)z(z + t)\)}

\noindent\parbox[right]{9mm}{\textbf{73:}}\parbox{3mm}{\hfill}%
\parbox[t]{15cm}{\(x(A_{0}x + A_{1}t)(x + t)y(y + t)z(z + t)(-A_{0}x + A_{0}y + A_{1}z)\)}

\noindent\parbox[right]{9mm}{\textbf{74:}}\parbox{3mm}{\hfill}%
\parbox[t]{15cm}{\(xy(A_{0}x + A_{1}y)z(A_{0}x + A_{2}y + (-A_{0} - A_{2})z)(x + t)(y + t)(z + t)\)}

\noindent\parbox[right]{9mm}{\textbf{75:}}\parbox{3mm}{\hfill}%
\parbox[t]{15cm}{\(xy(x + y)(A_{0}x + A_{1}y + A_{1}z)zt((-A_{0} + A_{1})x + A_{2}y + A_{1}t)(x + y + z + t)\)}

\noindent\parbox[right]{9mm}{\textbf{76:}}\parbox{3mm}{\hfill}%
\parbox[t]{15cm}{\(xy(A_{0}x + A_{1}y)(y + t)(x + t)(A_{0}x + A_{2}y - A_{2}z)z(z + t)\)}

\noindent\parbox[right]{9mm}{\textbf{77:}}\parbox{3mm}{\hfill}%
\parbox[t]{15cm}{\(x(x + t)(A_{0}x + A_{1}t)y(y + t)z(z + t)(A_{0}x + A_{2}y + (-A_{0} - A_{2})z)\)}

\noindent\parbox[right]{9mm}{\textbf{78:}}\parbox{3mm}{\hfill}%
\parbox[t]{15cm}{\(xy(A_{0}x + A_{1}y)z(A_{0}x - A_{0}y + A_{2}z)(x + t)(y + t)(z + t)\)}

\noindent\parbox[right]{9mm}{\textbf{79:}}\parbox{3mm}{\hfill}%
\parbox[t]{15cm}{\(x(A_{0}x + A_{1}y - A_{1}z)(A_{0}x + A_{2}y - A_{2}z)yz(y + t)(z + t)(x + t)\)}

\noindent\parbox[right]{9mm}{\textbf{80:}}\parbox{3mm}{\hfill}%
\parbox[t]{15cm}{\(xy(A_{0}x + A_{1}y)(A_{2}x + A_{3}y + 2A_{3}z)z(A_{2}x + A_{3}y + 2A_{3}t)t(x + y + z + t)\)}

\noindent\parbox[right]{9mm}{\textbf{81:}}\parbox{3mm}{\hfill}%
\parbox[t]{15cm}{\(x(x + y)yz(A_{0}x + A_{1}y + A_{2}z)(A_{0}x + A_{3}y + A_{3}t)t(x + y + z + t)\)}

\noindent\parbox[right]{9mm}{\textbf{82:}}\parbox{3mm}{\hfill}%
\parbox[t]{15cm}{\(x((A_{0} - A_{2})x + (A_{1} - A_{2})y)yz(A_{0}x + A_{1}y + A_{2}z)(A_{0}x + A_{3}y + A_{3}t)t(x + y + z + t)\)}

\noindent\parbox[right]{9mm}{\textbf{83:}}\parbox{3mm}{\hfill}%
\parbox[t]{15cm}{\(x(A_{0}x + A_{1}y)yz(A_{0}x + A_{2}y + A_{2}z)(A_{0}x + A_{3}y + A_{3}t)t(x + y + z + t)\)}

\noindent\parbox[right]{9mm}{\textbf{84:}}\parbox{3mm}{\hfill}%
\parbox[t]{15cm}{\(xy(A_{0}x + A_{1}y)((-A_{2} + A_{3})x + (-A_{2} + A_{3})y + A_{3}z)z(A_{0}x + A_{2}y + A_{3}t)t(x + y + z + t)\)}

\noindent\parbox[right]{9mm}{\textbf{85:}}\parbox{3mm}{\hfill}%
\parbox[t]{15cm}{\(xy(A_{0}x + A_{1}y)(A_{0}x + A_{2}y + A_{3}z)z(A_{4}x + (-A_{2} + A_{3})y + A_{3}t)t(x + y + z + t)\)}

\noindent\parbox[right]{9mm}{\textbf{86:}}\parbox{3mm}{\hfill}%
\parbox[t]{15cm}{\(xy(A_{1}x + A_{0}y)(A_{1}x + A_{2}y + A_{3}z)zt(A_{1}x + A_{2}y + A_{4}t)(x + y + z + t)\)}

\noindent\parbox[right]{9mm}{\textbf{87:}}\parbox{3mm}{\hfill}%
\parbox[t]{15cm}{\(xy(A_{0}x + A_{1}y)(A_{0}x + A_{0}y + A_{2}z)zt(A_{0}x + A_{3}y + A_{4}t)(x + y + z + t)\)}

\noindent\parbox[right]{9mm}{\textbf{88:}}\parbox{3mm}{\hfill}%
\parbox[t]{15cm}{\(xy(A_{0}x + A_{1}y)(A_{0}x + A_{2}y + A_{3}z)zt(A_{0}x + A_{4}y + A_{5}t)(x + y + z + t)\)}

\noindent\parbox[right]{9mm}{\textbf{89:}}\parbox{3mm}{\hfill}%
\parbox[t]{15cm}{\(x(A_{0}x + A_{1}y)y(x + t)(y + t)z(A_{0}x + A_{2}z - A_{1}t)(z + t)\)}

\noindent\parbox[right]{9mm}{\textbf{90:}}\parbox{3mm}{\hfill}%
\parbox[t]{15cm}{\(x(x + y)y(A_{0}x + A_{1}y + A_{1}z)zt(x + y + z + t)(A_{0}x + A_{2}y + A_{2}z + A_{3}t)\)}

\noindent\parbox[right]{9mm}{\textbf{91:}}\parbox{3mm}{\hfill}%
\parbox[t]{15cm}{\(xy(A_{0}x + A_{1}y)(A_{0}x + A_{2}y + A_{2}z)z(A_{0}x + A_{3}y + A_{3}z + A_{4}t)t(x + y + z + t)\)}

\noindent\parbox[right]{9mm}{\textbf{92:}}\parbox{3mm}{\hfill}%
\parbox[t]{15cm}{\(xy(A_{0}x + A_{1}y)z(A_{0}x + A_{0}y + A_{2}z)(A_{0}x + A_{0}y + A_{3}z + A_{4}t)t(x + y + z + t)\)}

\noindent\parbox[right]{9mm}{\textbf{93:}}\parbox{3mm}{\hfill}%
\parbox[t]{15cm}{\(xy(x + y)(y + t)(x + t)z(x + y - z + t)(z + t)\)}

\noindent\parbox[right]{9mm}{\textbf{94:}}\parbox{3mm}{\hfill}%
\parbox[t]{15cm}{\(xy(x + y)z(A_{0}x + A_{1}y + A_{0}z)(A_{1}x + (-A_{0} + A_{1})z + A_{1}t)t(x + y + z + t)\)}

\noindent\parbox[right]{9mm}{\textbf{95:}}\parbox{3mm}{\hfill}%
\parbox[t]{15cm}{\(xy(A_{0}x + A_{1}y)(y + t)(x + t)z(A_{0}x + A_{1}y - A_{0}z + A_{1}t)(z + t)\)}

\noindent\parbox[right]{9mm}{\textbf{96:}}\parbox{3mm}{\hfill}%
\parbox[t]{15cm}{\(xy(x + y)(A_{0}x + A_{1}z + A_{0}t)(A_{1}x + A_{0}y + A_{1}z + A_{0}t)zt(x + y + z + t)\)}

\noindent\parbox[right]{9mm}{\textbf{97:}}\parbox{3mm}{\hfill}%
\parbox[t]{15cm}{\(xy(x + y)(A_{0}x + A_{1}z + A_{0}t)(A_{0}y + A_{1}z + A_{0}t)zt(x + y + z + t)\)}

\noindent\parbox[right]{9mm}{\textbf{98:}}\parbox{3mm}{\hfill}%
\parbox[t]{15cm}{\(xy(A_{0}x + A_{1}y)(y + t)(x + t)z(x + y - z + t)(z + t)\)}

\noindent\parbox[right]{9mm}{\textbf{99:}}\parbox{3mm}{\hfill}%
\parbox[t]{15cm}{\(xy(A_{0}x + A_{1}y)(y + t)(x + t)z(A_{0}x + A_{0}y + A_{1}z + A_{0}t)(z + t)\)}

\noindent\parbox[right]{9mm}{\textbf{100:}}\parbox{3mm}{\hfill}%
\parbox[t]{15cm}{\(x(A_{0}x + A_{1}y - A_{0}z)(-A_{1}x + A_{1}y - A_{0}z)yz(x + t)(y + t)(z + t)\)}

\noindent\parbox[right]{9mm}{\textbf{101:}}\parbox{3mm}{\hfill}%
\parbox[t]{15cm}{\(xy(A_{0}x + A_{1}y)(y + t)(x + t)z(A_{0}x + A_{2}y - A_{0}z + A_{2}t)(z + t)\)}

\noindent\parbox[right]{9mm}{\textbf{102:}}\parbox{3mm}{\hfill}%
\parbox[t]{15cm}{\(xy(A_{0}x + A_{1}y)(y + t)(x + t)z(A_{0}x + A_{2}y + A_{1}z + A_{2}t)(z + t)\)}

\noindent\parbox[right]{9mm}{\textbf{103:}}\parbox{3mm}{\hfill}%
\parbox[t]{15cm}{\(xy(A_{0}x + A_{1}y)z(A_{0}x + A_{0}y + A_{2}z)((-A_{0} + A_{1})x + A_{1}z + (-A_{0} + A_{1})t)t(x + y + z + t)\)}

\noindent\parbox[right]{9mm}{\textbf{104:}}\parbox{3mm}{\hfill}%
\parbox[t]{15cm}{\(xy(A_{0}x + A_{1}y)(y + t)(x + t)z(A_{0}x + A_{1}y + A_{2}z + A_{1}t)(z + t)\)}

\noindent\parbox[right]{9mm}{\textbf{105:}}\parbox{3mm}{\hfill}%
\parbox[t]{15cm}{\(xy(A_{0}x + A_{1}y)z(A_{0}x + A_{2}y + A_{2}z)t((-A_{0} + A_{2})x + (-A_{0} + A_{2})z + A_{2}t)(x + y + z + t)\)}

\noindent\parbox[right]{9mm}{\textbf{106:}}\parbox{3mm}{\hfill}%
\parbox[t]{15cm}{\(xy(A_{0}x + A_{1}y)z(A_{0}x + A_{2}y + A_{2}z)t((-A_{0} + A_{1})x + A_{1}z + (-A_{0} + A_{1})t)(x + y + z + t)\)}

\noindent\parbox[right]{9mm}{\textbf{107:}}\parbox{3mm}{\hfill}%
\parbox[t]{15cm}{\(xy(A_{0}x + A_{1}y)(y + t)(x + t)z(A_{0}x + (A_{0} + A_{2})y + A_{2}z + (A_{0} + A_{2})t)(z + t)\)}

\noindent\parbox[right]{9mm}{\textbf{108:}}\parbox{3mm}{\hfill}%
\parbox[t]{15cm}{\(x((-A_{0} + A_{2})x + A_{2}y)yz(A_{0}x + A_{0}y + A_{1}z)(A_{0}x + A_{1}z + A_{2}t)t(x + y + z + t)\)}

\noindent\parbox[right]{9mm}{\textbf{109:}}\parbox{3mm}{\hfill}%
\parbox[t]{15cm}{\(x(x + y)yz((A_{0} + A_{1} - A_{2})x + A_{0}y + A_{0}z)t(A_{1}x + A_{2}z + (-A_{0} + A_{2})t)(x + y + z + t)\)}

\noindent\parbox[right]{9mm}{\textbf{110:}}\parbox{3mm}{\hfill}%
\parbox[t]{15cm}{\(x(A_{0}x + A_{1}y + A_{0}z)(A_{1}x + A_{1}y + A_{0}z)yz(A_{0}x + A_{2}y + A_{2}t)t(x + y + z + t)\)}

\noindent\parbox[right]{9mm}{\textbf{111:}}\parbox{3mm}{\hfill}%
\parbox[t]{15cm}{\(xy((A_{0} - A_{1})x + A_{1}y)(y + t)(x + t)z(A_{0}x + A_{1}y + A_{2}z + A_{1}t)(z + t)\)}

\noindent\parbox[right]{9mm}{\textbf{112:}}\parbox{3mm}{\hfill}%
\parbox[t]{15cm}{\(x(x + y)y(A_{0}x + A_{1}y + A_{1}z)zt(x + y + z + t)(A_{0}x + A_{1}y + A_{0}z + A_{2}t)\)}

\noindent\parbox[right]{9mm}{\textbf{113:}}\parbox{3mm}{\hfill}%
\parbox[t]{15cm}{\(xy((A_{1} - A_{2})x - A_{2}y)z(A_{0}x + (A_{0} + A_{1})y + A_{0}z)t(A_{1}x + A_{2}z + (A_{0} + A_{1})t)(x + y + z + t)\)}

\noindent\parbox[right]{9mm}{\textbf{114:}}\parbox{3mm}{\hfill}%
\parbox[t]{15cm}{\(xy(x + y)z(A_{0}x + A_{1}y + A_{2}z)((A_{0} - A_{1})x + (-A_{1} + A_{2})z + (A_{0} - A_{1})t)t(x + y + z + t)\)}

\noindent\parbox[right]{9mm}{\textbf{115:}}\parbox{3mm}{\hfill}%
\parbox[t]{15cm}{\(xy(A_{0}x + A_{1}y)(y + t)(x + t)z(A_{0}x + A_{0}y + A_{2}z + A_{0}t)(z + t)\)}

\noindent\parbox[right]{9mm}{\textbf{116:}}\parbox{3mm}{\hfill}%
\parbox[t]{15cm}{\(xy(A_{0}x + A_{1}y)z(A_{0}x + A_{0}y + A_{2}z)t(A_{0}x + A_{0}z + A_{3}t)(x + y + z + t)\)}

\noindent\parbox[right]{9mm}{\textbf{117:}}\parbox{3mm}{\hfill}%
\parbox[t]{15cm}{\(xy(A_{0}x + A_{1}y)(A_{2}x + A_{1}y + A_{2}z)zt(A_{0}x + A_{1}y + A_{2}z + A_{3}t)(x + y + z + t)\)}

\noindent\parbox[right]{9mm}{\textbf{118:}}\parbox{3mm}{\hfill}%
\parbox[t]{15cm}{\(xy(A_{0}x + A_{1}y)(x + t)(y + t)z(z + t)(A_{0}x + A_{2}y + A_{3}z + (A_{0} + A_{2})t)\)}

\noindent\parbox[right]{9mm}{\textbf{119:}}\parbox{3mm}{\hfill}%
\parbox[t]{15cm}{\(xy(A_{0}x + A_{1}y)(A_{2}x + (-A_{1} + A_{2})y + A_{2}z)zt((-A_{0} + A_{2})x + (-A_{1} + A_{2})y + A_{2}z + A_{3}t)(x + y + z + t)\)}

\noindent\parbox[right]{9mm}{\textbf{120:}}\parbox{3mm}{\hfill}%
\parbox[t]{15cm}{\(xy(A_{0}x + A_{1}y)((-A_{2} + 2A_{3})x + 2A_{3}y + A_{3}z)z(A_{2}x + 2A_{3}y + A_{3}z + 2A_{3}t)t(x + y + z + t)\)}

\noindent\parbox[right]{9mm}{\textbf{121:}}\parbox{3mm}{\hfill}%
\parbox[t]{15cm}{\(xy(A_{2}x + A_{0}y)z(A_{2}x + A_{2}y + A_{1}z)(A_{2}x + A_{3}z + A_{2}t)t(x + y + z + t)\)}

\noindent\parbox[right]{9mm}{\textbf{122:}}\parbox{3mm}{\hfill}%
\parbox[t]{15cm}{\(xy(A_{0}x + A_{1}y)(A_{0}x + A_{2}y + A_{2}z)zt(x + y + z + t)((A_{0} - A_{2})y + A_{3}z + A_{0}t)\)}

\noindent\parbox[right]{9mm}{\textbf{123:}}\parbox{3mm}{\hfill}%
\parbox[t]{15cm}{\(xy(A_{0}x + A_{1}y)(A_{0}x + A_{2}y + A_{2}z)zt(x + y + z + t)((A_{0} - A_{1})y + A_{0}z + A_{3}t)\)}

\noindent\parbox[right]{9mm}{\textbf{124:}}\parbox{3mm}{\hfill}%
\parbox[t]{15cm}{\(xy(A_{0}x + A_{1}y)(x + t)(y + t)z(z + t)(A_{0}x + A_{1}y + A_{2}z + A_{3}t)\)}

\noindent\parbox[right]{9mm}{\textbf{125:}}\parbox{3mm}{\hfill}%
\parbox[t]{15cm}{\(xy(A_{0}x + A_{1}y)((-A_{0} + A_{3})x + A_{3}y + A_{3}z)zt(x + y + z + t)(A_{0}x + A_{2}z + A_{3}t)\)}

\noindent\parbox[right]{9mm}{\textbf{126:}}\parbox{3mm}{\hfill}%
\parbox[t]{15cm}{\(xy(A_{0}x + A_{1}y)(A_{0}x + A_{2}y + A_{2}z)zt(x + y + z + t)(A_{0}x + A_{0}z + A_{3}t)\)}

\noindent\parbox[right]{9mm}{\textbf{127:}}\parbox{3mm}{\hfill}%
\parbox[t]{15cm}{\(xy(A_{0}x + A_{1}y)(A_{0}A_{3}x + (A_{0}A_{1} + A_{0}A_{3} - A_{1}A_{2})y + A_{0}A_{3}z)zt(A_{0}^2A_{3}x + (A_{0}A_{1}A_{2} + A_{0}A_{2}A_{3} - A_{1}A_{2}^2)y + A_{0}A_{2}A_{3}z + A_{0}A_{3}^2t)(x + y + z + t)\)}

\noindent\parbox[right]{9mm}{\textbf{128:}}\parbox{3mm}{\hfill}%
\parbox[t]{15cm}{\(xy(A_{0}x + A_{1}y)(A_{3}x + A_{2}y + A_{3}z)z(A_{0}x + A_{2}y + A_{3}z + A_{2}t)t(x + y + z + t)\)}

\noindent\parbox[right]{9mm}{\textbf{129:}}\parbox{3mm}{\hfill}%
\parbox[t]{15cm}{\(xy(A_{0}x + A_{1}y)(A_{0}x + A_{2}y + A_{2}z)zt(x + y + z + t)(A_{0}x + A_{0}y + A_{2}z + A_{3}t)\)}

\noindent\parbox[right]{9mm}{\textbf{130:}}\parbox{3mm}{\hfill}%
\parbox[t]{15cm}{\(xy(A_{0}x + A_{1}y)(A_{2}x + (A_{1} + A_{3})y + A_{2}z)zt((A_{0} + A_{3})x + (A_{1} + A_{3})y + A_{2}z + A_{3}t)(x + y + z + t)\)}

\noindent\parbox[right]{9mm}{\textbf{131:}}\parbox{3mm}{\hfill}%
\parbox[t]{15cm}{\(xy(A_{0}x + A_{1}y)(A_{0}x + A_{2}y + A_{2}z)zt(x + y + z + t)(A_{0}x + A_{2}y + A_{3}z + A_{4}t)\)}

\noindent\parbox[right]{9mm}{\textbf{132:}}\parbox{3mm}{\hfill}%
\parbox[t]{15cm}{\(xy(A_{0}x + A_{1}y)z(A_{0}x + A_{2}y + A_{3}z)(A_{0}x + A_{2}y + A_{2}z + A_{4}t)t(x + y + z + t)\)}

\noindent\parbox[right]{9mm}{\textbf{133:}}\parbox{3mm}{\hfill}%
\parbox[t]{15cm}{\(xy(A_{0}x + A_{1}y)(A_{0}x + A_{2}y + A_{2}z)zt(x + y + z + t)(A_{0}x + A_{3}y + A_{0}z + A_{4}t)\)}

\noindent\parbox[right]{9mm}{\textbf{134:}}\parbox{3mm}{\hfill}%
\parbox[t]{15cm}{\(xy(A_{0}x + A_{1}y)(A_{0}x + A_{2}y + A_{3}z)zt(A_{2}y + A_{3}z + A_{4}t)(x + y + z + t)\)}

\noindent\parbox[right]{9mm}{\textbf{135:}}\parbox{3mm}{\hfill}%
\parbox[t]{15cm}{\(xy(A_{1}x + A_{0}y)(A_{1}x + A_{2}y + A_{1}z)zt(A_{3}x + A_{2}y + A_{1}z + A_{4}t)(x + y + z + t)\)}

\noindent\parbox[right]{9mm}{\textbf{136:}}\parbox{3mm}{\hfill}%
\parbox[t]{15cm}{\(xy(A_{0}x + A_{1}y)(A_{0}x + A_{2}y + A_{2}z)zt(x + y + z + t)(A_{0}x + A_{3}z + A_{4}t)\)}

\noindent\parbox[right]{9mm}{\textbf{137:}}\parbox{3mm}{\hfill}%
\parbox[t]{15cm}{\(xy(A_{0}x + A_{1}y)(A_{2}x + A_{1}y + A_{1}z)zt(x + y + z + t)(A_{1}y + A_{3}z + A_{4}t)\)}

\noindent\parbox[right]{9mm}{\textbf{138:}}\parbox{3mm}{\hfill}%
\parbox[t]{15cm}{\(xy(A_{0}x + A_{1}y)(A_{0}x + A_{2}y + A_{2}z)zt(x + y + z + t)(A_{0}x + A_{0}y + A_{3}z + A_{4}t)\)}

\noindent\parbox[right]{9mm}{\textbf{139:}}\parbox{3mm}{\hfill}%
\parbox[t]{15cm}{\(xy(A_{0}x + A_{1}y)(A_{0}x + A_{2}y + A_{2}z)zt(x + y + z + t)(A_{0}x + A_{3}y + A_{4}z + A_{5}t)\)}

\noindent\parbox[right]{9mm}{\textbf{140:}}\parbox{3mm}{\hfill}%
\parbox[t]{15cm}{\(xy(A_{0}x + A_{1}y)z(A_{0}x + A_{2}y + A_{3}z)(A_{0}x + A_{0}y + A_{4}z + A_{5}t)t(x + y + z + t)\)}

\noindent\parbox[right]{9mm}{\textbf{141:}}\parbox{3mm}{\hfill}%
\parbox[t]{15cm}{\(xy(A_{0}x + A_{1}y)z(A_{0}x + A_{2}y + A_{3}z)(A_{0}x + A_{2}y + A_{4}z + A_{5}t)t(x + y + z + t)\)}

\noindent\parbox[right]{9mm}{\textbf{142:}}\parbox{3mm}{\hfill}%
\parbox[t]{15cm}{\(xy(A_{0}x + A_{1}y)z(A_{0}x + A_{2}y + A_{3}z)(A_{0}x + A_{4}y + A_{4}z + A_{5}t)t(x + y + z + t)\)}

\noindent\parbox[right]{9mm}{\textbf{143:}}\parbox{3mm}{\hfill}%
\parbox[t]{15cm}{\(xy(A_{0}x + A_{1}y)z(A_{0}x + A_{2}y + A_{3}z)(A_{0}x + A_{4}y + A_{5}z + A_{6}t)t(x + y + z + t)\)}

\noindent\parbox[right]{9mm}{\textbf{144:}}\parbox{3mm}{\hfill}%
\parbox[t]{15cm}{\(xy(x + y)z(A_{0}x + A_{1}z + A_{0}t)t(A_{0}x + A_{0}z + (2A_{0} - A_{1})t)(x + y + z + t)\)}

\noindent\parbox[right]{9mm}{\textbf{145:}}\parbox{3mm}{\hfill}%
\parbox[t]{15cm}{\((A_{0}y + A_{1}z + A_{2}t)x(A_{2}x + A_{0}y + A_{1}z + A_{2}t)y(y + t)z(z + t)(x + t)\)}

\noindent\parbox[right]{9mm}{\textbf{146:}}\parbox{3mm}{\hfill}%
\parbox[t]{15cm}{\(xy(x + y)z(A_{0}x + A_{1}z + A_{0}t)t(A_{0}x + A_{2}z + A_{3}t)(x + y + z + t)\)}

\noindent\parbox[right]{9mm}{\textbf{147:}}\parbox{3mm}{\hfill}%
\parbox[t]{15cm}{\((A_{0}y + A_{1}z + A_{2}t)x(A_{3}x + A_{0}y + A_{1}z + A_{2}t)y(y + t)z(z + t)(x + t)\)}

\noindent\parbox[right]{9mm}{\textbf{148:}}\parbox{3mm}{\hfill}%
\parbox[t]{15cm}{\(xy(A_{0}x + A_{1}y)z(A_{0}x + A_{2}z + A_{0}t)t((-A_{0} + A_{1})x + A_{1}z + A_{3}t)(x + y + z + t)\)}

\noindent\parbox[right]{9mm}{\textbf{149:}}\parbox{3mm}{\hfill}%
\parbox[t]{15cm}{\(xy(A_{0}x + A_{1}y)(A_{0}x + A_{2}z + A_{0}t)zt(A_{0}x + A_{3}z + A_{4}t)(x + y + z + t)\)}

\noindent\parbox[right]{9mm}{\textbf{150:}}\parbox{3mm}{\hfill}%
\parbox[t]{15cm}{\(xy(A_{0}x + A_{1}y)z(A_{0}x + A_{2}z + A_{3}t)(A_{0}x + A_{4}z + A_{5}t)t(x + y + z + t)\)}

\noindent\parbox[right]{9mm}{\textbf{151:}}\parbox{3mm}{\hfill}%
\parbox[t]{15cm}{\(xy(A_{0}x + A_{1}y)zt(A_{0}x + A_{0}y + A_{2}z + A_{3}t)(A_{0}x + A_{0}y + A_{4}z + A_{5}t)(x + y + z + t)\)}

\noindent\parbox[right]{9mm}{\textbf{152:}}\parbox{3mm}{\hfill}%
\parbox[t]{15cm}{\(x(x + 2y)yz(x + z + 2t)t(x + y + z + t)(A_{0}x + 2A_{0}y + A_{1}z + 2A_{0}t)\)}

\noindent\parbox[right]{9mm}{\textbf{153:}}\parbox{3mm}{\hfill}%
\parbox[t]{15cm}{\(x(x + y)yzt(A_{1}x + A_{0}z - A_{1}t)(x + y + z + t)(A_{1}y + (-A_{0} + A_{1})z + A_{1}t)\)}

\noindent\parbox[right]{9mm}{\textbf{154:}}\parbox{3mm}{\hfill}%
\parbox[t]{15cm}{\(xy(x + y)zt(A_{0}x + A_{0}z + A_{1}t)(A_{0}y + A_{1}z + A_{0}t)(x + y + z + t)\)}

\noindent\parbox[right]{9mm}{\textbf{155:}}\parbox{3mm}{\hfill}%
\parbox[t]{15cm}{\(xy(x + y)z(A_{0}x + A_{0}z + A_{1}t)t(A_{0}^2x + A_{0}A_{1}y + (A_{0}^2 - A_{0}A_{1} + A_{1}^2)z + A_{0}A_{1}t)(x + y + z + t)\)}

\noindent\parbox[right]{9mm}{\textbf{156:}}\parbox{3mm}{\hfill}%
\parbox[t]{15cm}{\(xy(x + y)zt(A_{0}x + A_{1}z + A_{0}t)(A_{2}y + A_{1}z + A_{2}t)(x + y + z + t)\)}

\noindent\parbox[right]{9mm}{\textbf{157:}}\parbox{3mm}{\hfill}%
\parbox[t]{15cm}{\(xy(A_{0}x + A_{1}y)zt(-A_{0}x - A_{0}z + A_{1}t)(A_{1}y + A_{2}z + A_{1}t)(x + y + z + t)\)}

\noindent\parbox[right]{9mm}{\textbf{158:}}\parbox{3mm}{\hfill}%
\parbox[t]{15cm}{\(xy(x + y)z(A_{0}x + A_{0}z + A_{1}t)t(A_{0}x + A_{1}y + A_{2}z + A_{1}t)(x + y + z + t)\)}

\noindent\parbox[right]{9mm}{\textbf{159:}}\parbox{3mm}{\hfill}%
\parbox[t]{15cm}{\(x(x + y)yzt(A_{0}x + A_{1}z + A_{2}t)(x + y + z + t)(A_{0}y + (A_{0} - A_{1})z + A_{0}t)\)}

\noindent\parbox[right]{9mm}{\textbf{160:}}\parbox{3mm}{\hfill}%
\parbox[t]{15cm}{\(x(A_{0}x + (A_{0} + A_{1})y)yz(A_{1}x + A_{2}z + (A_{0} + A_{1})t)t(x + y + z + t)(A_{1}y + (A_{1} - A_{2})z + A_{1}t)\)}

\noindent\parbox[right]{9mm}{\textbf{161:}}\parbox{3mm}{\hfill}%
\parbox[t]{15cm}{\(xy(A_{0}x + A_{1}y)zt(A_{0}x + A_{2}z + A_{0}t)(A_{0}y + (A_{0} - A_{2})z + A_{0}t)(x + y + z + t)\)}

\noindent\parbox[right]{9mm}{\textbf{162:}}\parbox{3mm}{\hfill}%
\parbox[t]{15cm}{\(xy(x + y)(A_{0}A_{2}x + A_{0}A_{1}y + A_{0}A_{1}z + A_{1}A_{2}t)(A_{2}x + A_{0}y + A_{1}z + A_{2}t)zt(x + y + z + t)\)}

\noindent\parbox[right]{9mm}{\textbf{163:}}\parbox{3mm}{\hfill}%
\parbox[t]{15cm}{\(xy((A_{0} - A_{1})x + A_{0}y)z(A_{1}x + A_{0}z + A_{1}t)t(x + y + z + t)(A_{0}y + A_{2}z + A_{0}t)\)}

\noindent\parbox[right]{9mm}{\textbf{164:}}\parbox{3mm}{\hfill}%
\parbox[t]{15cm}{\(xy(x + y)zt(A_{0}x + A_{0}z + A_{1}t)(A_{0}y + A_{2}z + A_{0}t)(x + y + z + t)\)}

\noindent\parbox[right]{9mm}{\textbf{165:}}\parbox{3mm}{\hfill}%
\parbox[t]{15cm}{\(xy(A_{0}x + A_{1}y)z((-A_{0} + A_{1})x + A_{1}z + (-A_{0} + A_{1})t)t(x + y + z + t)(A_{0}x + A_{2}y + A_{3}z + A_{2}t)\)}

\noindent\parbox[right]{9mm}{\textbf{166:}}\parbox{3mm}{\hfill}%
\parbox[t]{15cm}{\(xy(x + y)zt(A_{0}x + A_{1}z + A_{2}t)(A_{3}y + A_{0}z + A_{3}t)(x + y + z + t)\)}

\noindent\parbox[right]{9mm}{\textbf{167:}}\parbox{3mm}{\hfill}%
\parbox[t]{15cm}{\(xy((-A_{0} + A_{1})x + A_{1}y)(A_{0}x + A_{1}z + A_{0}t)zt(x + y + z + t)(A_{0}x + A_{0}y + A_{2}z + A_{3}t)\)}

\noindent\parbox[right]{9mm}{\textbf{168:}}\parbox{3mm}{\hfill}%
\parbox[t]{15cm}{\(xy((A_{1} - A_{3})x + (A_{2} - A_{3})y)zt(A_{1}x + A_{1}z + A_{0}t)(x + y + z + t)(A_{1}x + A_{2}y + A_{3}z + A_{2}t)\)}

\noindent\parbox[right]{9mm}{\textbf{169:}}\parbox{3mm}{\hfill}%
\parbox[t]{15cm}{\(xy(A_{0}x + A_{1}y)z(A_{0}x + A_{2}z + A_{0}t)t(x + y + z + t)(A_{0}x + A_{1}y + A_{3}z + A_{1}t)\)}

\noindent\parbox[right]{9mm}{\textbf{170:}}\parbox{3mm}{\hfill}%
\parbox[t]{15cm}{\(xy(A_{0}x + A_{1}y)zt(A_{0}x + A_{0}z + A_{2}t)(A_{3}y + A_{0}z + A_{3}t)(x + y + z + t)\)}

\noindent\parbox[right]{9mm}{\textbf{171:}}\parbox{3mm}{\hfill}%
\parbox[t]{15cm}{\(xy(A_{0}x + A_{1}y)zt(A_{0}x + A_{2}z + A_{0}t)(A_{3}y + A_{0}z + A_{3}t)(x + y + z + t)\)}

\noindent\parbox[right]{9mm}{\textbf{172:}}\parbox{3mm}{\hfill}%
\parbox[t]{15cm}{\(xy((A_{0} - A_{1})x + A_{0}y)zt(A_{1}x + (A_{1} - A_{2})z + A_{0}t)(A_{1}y + A_{2}z + A_{3}t)(x + y + z + t)\)}

\noindent\parbox[right]{9mm}{\textbf{173:}}\parbox{3mm}{\hfill}%
\parbox[t]{15cm}{\(x(A_{0}x + A_{1}y + A_{2}z + A_{0}t)(A_{3}x + A_{1}y + A_{2}z + A_{0}t)(x + t)y(y + t)z(z + t)\)}

\noindent\parbox[right]{9mm}{\textbf{174:}}\parbox{3mm}{\hfill}%
\parbox[t]{15cm}{\(xy(x + y)zt(A_{1}x + A_{0}z + (A_{1} - A_{3})t)(A_{1}y + A_{2}z + A_{3}t)(x + y + z + t)\)}

\noindent\parbox[right]{9mm}{\textbf{175:}}\parbox{3mm}{\hfill}%
\parbox[t]{15cm}{\(xy(A_{0}x + A_{1}y)zt((-A_{0} + A_{1})x + A_{1}z + A_{2}t)(A_{3}y + A_{0}z + A_{3}t)(x + y + z + t)\)}

\noindent\parbox[right]{9mm}{\textbf{176:}}\parbox{3mm}{\hfill}%
\parbox[t]{15cm}{\(xy((A_{1} - A_{3})x + A_{2}y)zt(A_{1}x + A_{1}z + A_{0}t)(x + y + z + t)(A_{1}x + A_{2}y + A_{3}z + A_{2}t)\)}

\noindent\parbox[right]{9mm}{\textbf{177:}}\parbox{3mm}{\hfill}%
\parbox[t]{15cm}{\(xy(A_{1}x + A_{2}y)z(A_{1}x + A_{0}z + A_{4}t)t(A_{1}x + A_{2}y + A_{3}z + A_{4}t)(x + y + z + t)\)}

\noindent\parbox[right]{9mm}{\textbf{178:}}\parbox{3mm}{\hfill}%
\parbox[t]{15cm}{\(xy(A_{0}x + A_{1}y)z(A_{0}x + A_{2}z + A_{3}t)t(A_{0}x + A_{3}y + A_{4}z + A_{3}t)(x + y + z + t)\)}

\noindent\parbox[right]{9mm}{\textbf{179:}}\parbox{3mm}{\hfill}%
\parbox[t]{15cm}{\(xy(A_{0}x + A_{1}y)z((-A_{0} + A_{1})x + A_{2}z + A_{1}t)t((-A_{0} + A_{1})x + A_{3}y + A_{4}z + A_{1}t)(x + y + z + t)\)}

\noindent\parbox[right]{9mm}{\textbf{180:}}\parbox{3mm}{\hfill}%
\parbox[t]{15cm}{\(xy(A_{0}x + A_{1}y)z(A_{1}x + A_{2}z + A_{4}t)t(A_{1}x + A_{1}y + A_{3}z + A_{4}t)(x + y + z + t)\)}

\noindent\parbox[right]{9mm}{\textbf{181:}}\parbox{3mm}{\hfill}%
\parbox[t]{15cm}{\(xy(A_{0}x + A_{1}y)z(A_{0}x + A_{0}z + A_{2}t)t(x + y + z + t)(A_{0}x + A_{3}y + A_{4}z + A_{3}t)\)}

\noindent\parbox[right]{9mm}{\textbf{182:}}\parbox{3mm}{\hfill}%
\parbox[t]{15cm}{\(xy(x + y)z(A_{0}x + A_{1}z + A_{2}t)t(A_{0}x + A_{3}y + A_{4}z + A_{2}t)(x + y + z + t)\)}

\noindent\parbox[right]{9mm}{\textbf{183:}}\parbox{3mm}{\hfill}%
\parbox[t]{15cm}{\(xy(A_{0}x + A_{1}y)z((A_{0} - A_{4})x + (A_{3} - A_{4})z + A_{2}t)t(x + y + z + t)(A_{0}x + A_{4}y + A_{3}z + A_{4}t)\)}

\noindent\parbox[right]{9mm}{\textbf{184:}}\parbox{3mm}{\hfill}%
\parbox[t]{15cm}{\(xy(A_{0}x + A_{1}y)z((-A_{0} + A_{3})x + (A_{3} - A_{4})z + A_{2}t)t(x + y + z + t)(A_{0}x + A_{3}y + A_{4}z + A_{0}t)\)}

\noindent\parbox[right]{9mm}{\textbf{185:}}\parbox{3mm}{\hfill}%
\parbox[t]{15cm}{\(xy((A_{1} - A_{3})x + (A_{2} - A_{3})y)z(A_{1}x + A_{0}z + A_{4}t)t(A_{1}x + A_{2}y + A_{3}z + A_{4}t)(x + y + z + t)\)}

\noindent\parbox[right]{9mm}{\textbf{186:}}\parbox{3mm}{\hfill}%
\parbox[t]{15cm}{\(xy(A_{0}x + A_{1}y)z(A_{0}x + A_{2}z + A_{3}t)t(A_{0}x + A_{4}y + A_{5}z + A_{3}t)(x + y + z + t)\)}

\noindent\parbox[right]{9mm}{\textbf{187:}}\parbox{3mm}{\hfill}%
\parbox[t]{15cm}{\(xy(A_{0}x + A_{1}y)z(A_{0}x + A_{2}z + A_{3}t)t(x + y + z + t)(A_{0}x + A_{4}y + A_{5}z + A_{0}t)\)}

\noindent\parbox[right]{9mm}{\textbf{188:}}\parbox{3mm}{\hfill}%
\parbox[t]{15cm}{\(xy(A_{0}x + A_{1}y)z(A_{0}x + A_{2}z + A_{3}t)t(x + y + z + t)(A_{0}x + A_{4}y + A_{5}z + A_{4}t)\)}

\noindent\parbox[right]{9mm}{\textbf{189:}}\parbox{3mm}{\hfill}%
\parbox[t]{15cm}{\(xy(A_{0}x + A_{1}y)z(A_{0}x + A_{2}z + A_{3}t)t(x + y + z + t)((-A_{0} + A_{4})x + A_{4}y + A_{5}z + (-A_{3} + A_{4})t)\)}

\noindent\parbox[right]{9mm}{\textbf{190:}}\parbox{3mm}{\hfill}%
\parbox[t]{15cm}{\(xy(A_{0}x + A_{1}y)z(A_{0}x + A_{2}z + A_{3}t)t(x + y + z + t)(A_{0}x + A_{4}y + A_{5}z + A_{6}t)\)}

\noindent\parbox[right]{9mm}{\textbf{191:}}\parbox{3mm}{\hfill}%
\parbox[t]{15cm}{\(xy(A_{0}x + A_{1}y)zt(A_{0}x + A_{2}y + A_{3}z + A_{4}t)(A_{0}x + A_{2}y + A_{5}z + A_{6}t)(x + y + z + t)\)}

\noindent\parbox[right]{9mm}{\textbf{192:}}\parbox{3mm}{\hfill}%
\parbox[t]{15cm}{\(xy(A_{0}x + A_{1}y)zt(A_{0}x + A_{2}y + A_{3}z + A_{4}t)(A_{0}x + A_{5}y + A_{6}z + A_{7}t)(x + y + z + t)\)}

\noindent\parbox[right]{9mm}{\textbf{193:}}\parbox{3mm}{\hfill}%
\parbox[t]{15cm}{\((A_{0}x + A_{1}y + (-A_{0} - A_{1})z)(A_{0}x + A_{2}y + (-A_{0} - A_{2})z)xyz(x + t)(y + t)(z + t)\)}

\noindent\parbox[right]{9mm}{\textbf{194:}}\parbox{3mm}{\hfill}%
\parbox[t]{15cm}{\(x(x + t)y(y + t)(A_{0}x + A_{1}y + A_{2}t)z(z + t)(A_{0}x + A_{3}z + A_{2}t)\)}

\noindent\parbox[right]{9mm}{\textbf{195:}}\parbox{3mm}{\hfill}%
\parbox[t]{15cm}{\(x(A_{0}x + A_{1}y + A_{1}z)y(A_{0}x + A_{2}y + A_{0}z)zt(x + y + z + t)(A_{0}x + A_{3}y + A_{3}z + A_{4}t)\)}

\noindent\parbox[right]{9mm}{\textbf{196:}}\parbox{3mm}{\hfill}%
\parbox[t]{15cm}{\(x(A_{0}x + A_{1}y + A_{1}z)yz(A_{0}x + A_{2}y + A_{3}z)(A_{0}x + A_{4}y + A_{4}z + A_{5}t)t(x + y + z + t)\)}

\noindent\parbox[right]{9mm}{\textbf{197:}}\parbox{3mm}{\hfill}%
\parbox[t]{15cm}{\(xy(x + t)(y + t)(A_{0}x + A_{0}y + A_{1}t)z(x + y - z)(z + t)\)}

\noindent\parbox[right]{9mm}{\textbf{198:}}\parbox{3mm}{\hfill}%
\parbox[t]{15cm}{\(x(x + 2y + 2z)yz(A_{0}x + A_{1}y + A_{0}z)t(x + y + z + t)(x + 2y + 2t)\)}

\noindent\parbox[right]{9mm}{\textbf{199:}}\parbox{3mm}{\hfill}%
\parbox[t]{15cm}{\(xyz(A_{0}x + A_{1}y - A_{1}z)(A_{1}x + A_{0}y - A_{1}z)(x + t)(y + t)(z + t)\)}

\noindent\parbox[right]{9mm}{\textbf{200:}}\parbox{3mm}{\hfill}%
\parbox[t]{15cm}{\(x(A_{0}x + A_{1}z + A_{0}t)(A_{1}x + A_{0}y + A_{1}z + A_{0}t)(A_{1}y + A_{1}z + A_{0}t)yzt(x + y + z + t)\)}

\noindent\parbox[right]{9mm}{\textbf{201:}}\parbox{3mm}{\hfill}%
\parbox[t]{15cm}{\(xyz(A_{0}x + A_{1}y - A_{1}z)(A_{0}x + A_{2}y + (-A_{0} - A_{2})z)(x + t)(y + t)(z + t)\)}

\noindent\parbox[right]{9mm}{\textbf{202:}}\parbox{3mm}{\hfill}%
\parbox[t]{15cm}{\(xy(A_{0}x + A_{1}y + A_{0}z)(A_{2}x + (A_{0} - A_{1})y + (A_{0} - A_{1})z)z(A_{2}x + (A_{0} - A_{1})y + A_{0}t)t(x + y + z + t)\)}

\noindent\parbox[right]{9mm}{\textbf{203:}}\parbox{3mm}{\hfill}%
\parbox[t]{15cm}{\(xy(x + t)(y + t)(A_{0}x + A_{1}y + A_{2}t)z(x + y - z)(z + t)\)}

\noindent\parbox[right]{9mm}{\textbf{204:}}\parbox{3mm}{\hfill}%
\parbox[t]{15cm}{\(xy(A_{0}x + A_{0}y + A_{2}z)(A_{0}x + A_{1}y + A_{1}z)z(A_{2}x + (-A_{0} + A_{2})y + A_{2}t)t(x + y + z + t)\)}

\noindent\parbox[right]{9mm}{\textbf{205:}}\parbox{3mm}{\hfill}%
\parbox[t]{15cm}{\(xyz(A_{0}x + A_{1}y - A_{1}z)(A_{0}x + A_{2}y - A_{0}z)(x + t)(y + t)(z + t)\)}

\noindent\parbox[right]{9mm}{\textbf{206:}}\parbox{3mm}{\hfill}%
\parbox[t]{15cm}{\(xy(A_{2}x + A_{0}y + A_{0}z)(A_{1}x + (A_{0} + A_{1} - A_{2})y + A_{1}z)zt(A_{2}x + A_{0}y + (-A_{1} + A_{2})t)(x + y + z + t)\)}

\noindent\parbox[right]{9mm}{\textbf{207:}}\parbox{3mm}{\hfill}%
\parbox[t]{15cm}{\(x(x + t)y(A_{0}x + A_{1}y + A_{2}t)(y + t)z(z + t)(A_{1}x - A_{1}y + A_{2}z)\)}

\noindent\parbox[right]{9mm}{\textbf{208:}}\parbox{3mm}{\hfill}%
\parbox[t]{15cm}{\(x(A_{0}x + A_{1}y + A_{2}z)y(A_{2}x + (-A_{0} + A_{1} + A_{2})y + A_{2}z)z(A_{1}x + A_{1}y + A_{2}z + A_{2}t)t(x + y + z + t)\)}

\noindent\parbox[right]{9mm}{\textbf{209:}}\parbox{3mm}{\hfill}%
\parbox[t]{15cm}{\(x(A_{0}x + A_{1}y + A_{1}z)y(A_{1}x + (-A_{0} + 2A_{1})y + A_{1}z)zt(x + y + z + t)(A_{0}x + A_{0}y + A_{2}t)\)}

\noindent\parbox[right]{9mm}{\textbf{210:}}\parbox{3mm}{\hfill}%
\parbox[t]{15cm}{\(xy((A_{1} - A_{3})x + (A_{2} - A_{3})y + (A_{2} - A_{3})z)((-A_{1} + A_{3})x + A_{0}y + A_{3}z)zt(A_{1}x + A_{2}y + A_{3}t)(x + y + z + t)\)}

\noindent\parbox[right]{9mm}{\textbf{211:}}\parbox{3mm}{\hfill}%
\parbox[t]{15cm}{\(xy(x + t)(A_{0}x + A_{1}y + A_{2}t)(y + t)z(A_{3}x + A_{1}y - A_{2}z)(z + t)\)}

\noindent\parbox[right]{9mm}{\textbf{212:}}\parbox{3mm}{\hfill}%
\parbox[t]{15cm}{\(xy(y + t)(x + t)(A_{0}x + A_{1}y + A_{2}t)(A_{0}x + A_{3}y - A_{3}z)z(z + t)\)}

\noindent\parbox[right]{9mm}{\textbf{213:}}\parbox{3mm}{\hfill}%
\parbox[t]{15cm}{\(xy(x + t)(y + t)(A_{0}x + A_{1}y + A_{2}t)z(A_{0}x + A_{1}y + A_{3}z)(z + t)\)}

\noindent\parbox[right]{9mm}{\textbf{214:}}\parbox{3mm}{\hfill}%
\parbox[t]{15cm}{\(x(x + t)y(A_{0}x + A_{1}y + A_{2}t)(y + t)z(z + t)((A_{0} - A_{2})x + A_{1}y + A_{3}z)\)}

\noindent\parbox[right]{9mm}{\textbf{215:}}\parbox{3mm}{\hfill}%
\parbox[t]{15cm}{\(xy(x + t)(y + t)(A_{0}x + A_{1}y + A_{2}t)z(A_{0}x + A_{3}y + (-A_{0} - A_{3})z)(z + t)\)}

\noindent\parbox[right]{9mm}{\textbf{216:}}\parbox{3mm}{\hfill}%
\parbox[t]{15cm}{\(xy(A_{2}x + A_{0}y + A_{0}z)((-A_{2} + A_{3})x + A_{1}y + A_{3}z)zt(A_{2}x + A_{2}y + A_{3}t)(x + y + z + t)\)}

\noindent\parbox[right]{9mm}{\textbf{217:}}\parbox{3mm}{\hfill}%
\parbox[t]{15cm}{\(x(x + t)y(y + t)(A_{0}x + A_{1}y + A_{2}t)z(z + t)(A_{0}x - A_{0}y + A_{3}z)\)}

\noindent\parbox[right]{9mm}{\textbf{218:}}\parbox{3mm}{\hfill}%
\parbox[t]{15cm}{\(xy(x + t)(y + t)(A_{0}x + A_{1}y + A_{2}t)z(A_{3}x + A_{1}y + (A_{0} - A_{2} - A_{3})z)(z + t)\)}

\noindent\parbox[right]{9mm}{\textbf{219:}}\parbox{3mm}{\hfill}%
\parbox[t]{15cm}{\(xyz(A_{0}x + A_{1}y + A_{2}z)(A_{3}x + (A_{0} + A_{1} - A_{3})y + A_{2}z)(x + t)(y + t)(z + t)\)}

\noindent\parbox[right]{9mm}{\textbf{220:}}\parbox{3mm}{\hfill}%
\parbox[t]{15cm}{\(xy(A_{0}x + A_{1}y + A_{1}z)(A_{0}x + A_{2}y + A_{3}z)zt((-A_{2} + A_{3})x + (-A_{2} + A_{3})y + A_{3}t)(x + y + z + t)\)}

\noindent\parbox[right]{9mm}{\textbf{221:}}\parbox{3mm}{\hfill}%
\parbox[t]{15cm}{\(xyz(A_{0}x + A_{1}y + (-A_{0} - A_{1})z)(A_{0}x + A_{2}y + A_{3}z)(x + t)(y + t)(z + t)\)}

\noindent\parbox[right]{9mm}{\textbf{222:}}\parbox{3mm}{\hfill}%
\parbox[t]{15cm}{\(xy(A_{0}x + A_{1}y + A_{2}z)z(A_{0}x + A_{3}y + A_{4}z)((-A_{1} + A_{2})x + (-A_{1} + A_{2})y + A_{2}t)t(x + y + z + t)\)}

\noindent\parbox[right]{9mm}{\textbf{223:}}\parbox{3mm}{\hfill}%
\parbox[t]{15cm}{\(xy(A_{0}x + A_{1}y + A_{1}z)(A_{0}x + A_{2}y + A_{0}z)zt(A_{0}x + A_{3}y + A_{4}t)(x + y + z + t)\)}

\noindent\parbox[right]{9mm}{\textbf{224:}}\parbox{3mm}{\hfill}%
\parbox[t]{15cm}{\(xy(A_{0}x + A_{1}y + A_{2}z)z(A_{3}x + (-A_{1} + A_{2})y + A_{4}z)(A_{3}x + (-A_{1} + A_{2})y + A_{2}t)t(x + y + z + t)\)}

\noindent\parbox[right]{9mm}{\textbf{225:}}\parbox{3mm}{\hfill}%
\parbox[t]{15cm}{\(xy((A_{1} - A_{3})x + (A_{2} - A_{3})y + A_{0}z)z(A_{1}x + A_{2}y + A_{3}z)t((A_{1} - A_{3})x + (A_{2} - A_{3})y + A_{4}t)(x + y + z + t)\)}

\noindent\parbox[right]{9mm}{\textbf{226:}}\parbox{3mm}{\hfill}%
\parbox[t]{15cm}{\(xy(A_{0}x + A_{1}y + A_{2}z)z(A_{0}x + A_{0}y + A_{3}z)(A_{4}x + (-A_{1} + A_{2})y + A_{2}t)t(x + y + z + t)\)}

\noindent\parbox[right]{9mm}{\textbf{227:}}\parbox{3mm}{\hfill}%
\parbox[t]{15cm}{\(xy(A_{0}x + A_{1}y + A_{1}z)(A_{0}x + A_{2}y + A_{3}z)zt(A_{4}x + (-A_{2} + A_{3})y + A_{3}t)(x + y + z + t)\)}

\noindent\parbox[right]{9mm}{\textbf{228:}}\parbox{3mm}{\hfill}%
\parbox[t]{15cm}{\(xy(A_{1}x + A_{0}y + A_{0}z)(A_{1}x + A_{2}y + A_{3}z)zt((-A_{1} + A_{3})x + A_{4}y + A_{3}t)(x + y + z + t)\)}

\noindent\parbox[right]{9mm}{\textbf{229:}}\parbox{3mm}{\hfill}%
\parbox[t]{15cm}{\(xy(A_{0}x + A_{1}y + A_{1}z)(A_{0}x + A_{2}y + A_{3}z)zt((-A_{0} + A_{1})x + A_{4}y + A_{1}t)(x + y + z + t)\)}

\noindent\parbox[right]{9mm}{\textbf{230:}}\parbox{3mm}{\hfill}%
\parbox[t]{15cm}{\(xy(A_{0}x + A_{0}y + A_{1}z)z(A_{0}x + A_{2}y + A_{3}z)t((A_{0} - A_{3})x + (A_{2} - A_{3})y + A_{4}t)(x + y + z + t)\)}

\noindent\parbox[right]{9mm}{\textbf{231:}}\parbox{3mm}{\hfill}%
\parbox[t]{15cm}{\(xy(A_{0}x + A_{1}y + A_{2}z)z(A_{0}x + A_{3}y + A_{4}z)t(A_{0}x + A_{1}y + A_{5}t)(x + y + z + t)\)}

\noindent\parbox[right]{9mm}{\textbf{232:}}\parbox{3mm}{\hfill}%
\parbox[t]{15cm}{\(xy(A_{0}x + A_{0}y + A_{1}z)z(A_{0}x + A_{2}y + A_{3}z)t(A_{0}x + A_{4}y + A_{5}t)(x + y + z + t)\)}

\noindent\parbox[right]{9mm}{\textbf{233:}}\parbox{3mm}{\hfill}%
\parbox[t]{15cm}{\(xy(A_{0}x + A_{1}y + A_{2}z)z(A_{0}x + A_{3}y + A_{4}z)(A_{5}x + (-A_{1} + A_{2})y + A_{2}t)t(x + y + z + t)\)}

\noindent\parbox[right]{9mm}{\textbf{234:}}\parbox{3mm}{\hfill}%
\parbox[t]{15cm}{\(xy(A_{0}x + A_{1}y + A_{2}z)z(A_{0}x + A_{3}y + A_{4}z)t((A_{0} - A_{2})x + (A_{1} - A_{2})y + A_{5}t)(x + y + z + t)\)}

\noindent\parbox[right]{9mm}{\textbf{235:}}\parbox{3mm}{\hfill}%
\parbox[t]{15cm}{\(xy(A_{0}x + A_{1}y + A_{2}z)z(A_{0}x + A_{3}y + A_{4}z)t(A_{0}x + A_{5}y + A_{6}t)(x + y + z + t)\)}

\noindent\parbox[right]{9mm}{\textbf{236:}}\parbox{3mm}{\hfill}%
\parbox[t]{15cm}{\(xyz(A_{0}x + A_{1}y + A_{2}z)(A_{0}x + A_{3}y + A_{4}z)(A_{0}x + A_{5}y + A_{5}z + A_{6}t)t(x + y + z + t)\)}

\noindent\parbox[right]{9mm}{\textbf{237:}}\parbox{3mm}{\hfill}%
\parbox[t]{15cm}{\(xyz(A_{0}x + A_{1}y + A_{2}z)(A_{0}x + A_{3}y + A_{4}z)(A_{0}x + A_{5}y + A_{6}z + A_{7}t)t(x + y + z + t)\)}

\noindent\parbox[right]{9mm}{\textbf{238:}}\parbox{3mm}{\hfill}%
\parbox[t]{15cm}{\(xyz(x - y - z)(x + t)(y + t)(z + t)(x - y - z - t)\)}

\noindent\parbox[right]{9mm}{\textbf{239:}}\parbox{3mm}{\hfill}%
\parbox[t]{15cm}{\(xyz(x + y - z)(z + t)(x + y + z + t)(x + t)(y + t)\)}

\noindent\parbox[right]{9mm}{\textbf{240:}}\parbox{3mm}{\hfill}%
\parbox[t]{15cm}{\(xyz(x - y - z)(x + t)(y + t)(z + t)(x - 2y - 2z - 2t)\)}

\noindent\parbox[right]{9mm}{\textbf{241:}}\parbox{3mm}{\hfill}%
\parbox[t]{15cm}{\(xyz(x - y - z)(x - y + t)t(x + y - z - t)(x + y + z + t)\)}

\noindent\parbox[right]{9mm}{\textbf{242:}}\parbox{3mm}{\hfill}%
\parbox[t]{15cm}{\(xy(x + t)(y + t)(A_{0}x - A_{0}y + A_{1}z)z(A_{0}x - A_{0}y + A_{1}z + A_{1}t)(z + t)\)}

\noindent\parbox[right]{9mm}{\textbf{243:}}\parbox{3mm}{\hfill}%
\parbox[t]{15cm}{\(xyz(x - y - z)(x + t)(y + t)(z + t)(A_{0}x + A_{1}y + A_{1}z + A_{1}t)\)}

\noindent\parbox[right]{9mm}{\textbf{244:}}\parbox{3mm}{\hfill}%
\parbox[t]{15cm}{\(xy(x + t)(y + t)(A_{0}x + A_{1}y + (-A_{0} - A_{1})z)z(A_{0}x + A_{1}y + (-A_{0} - A_{1})z + (-A_{0} - A_{1})t)(z + t)\)}

\noindent\parbox[right]{9mm}{\textbf{245:}}\parbox{3mm}{\hfill}%
\parbox[t]{15cm}{\(xyz(-2x - y + z)(z + t)(x + y + z + t)(x + t)(y + t)\)}

\noindent\parbox[right]{9mm}{\textbf{246:}}\parbox{3mm}{\hfill}%
\parbox[t]{15cm}{\(xy(A_{0}x + A_{1}y + (-A_{0} - A_{1})z)z(-A_{1}x - A_{0}y + (-A_{0} - A_{1})z + (-A_{0} - A_{1})t)(z + t)(y + t)(x + t)\)}

\noindent\parbox[right]{9mm}{\textbf{247:}}\parbox{3mm}{\hfill}%
\parbox[t]{15cm}{\(xyz(A_{0}x + A_{1}y + A_{1}z)(x + t)(y + t)(z + t)(-1x + y + z + t)\)}

\noindent\parbox[right]{9mm}{\textbf{248:}}\parbox{3mm}{\hfill}%
\parbox[t]{15cm}{\(xyz(A_{0}x + A_{1}y - A_{1}z)(z + t)(x + y + z + t)(x + t)(y + t)\)}

\noindent\parbox[right]{9mm}{\textbf{249:}}\parbox{3mm}{\hfill}%
\parbox[t]{15cm}{\(x(x + 2y + 2z)yz(A_{0}x + (2A_{0} - A_{1})y + 2A_{0}t)(A_{0}x + 2A_{0}y + A_{1}z + 2A_{0}t)t(x + y + z + t)\)}

\noindent\parbox[right]{9mm}{\textbf{250:}}\parbox{3mm}{\hfill}%
\parbox[t]{15cm}{\(xyz(-2A_{1}x + A_{0}y + A_{1}z)(z + t)(x - y - z - t)(x + t)(y + t)\)}

\noindent\parbox[right]{9mm}{\textbf{251:}}\parbox{3mm}{\hfill}%
\parbox[t]{15cm}{\(xyz(A_{0}x + A_{1}y + (-A_{0} - A_{1})z)(y + t)(x + t)(-A_{1}x + (-A_{0} - A_{1})y + (-A_{0} - A_{1})z + (-A_{0} - A_{1})t)(z + t)\)}

\noindent\parbox[right]{9mm}{\textbf{252:}}\parbox{3mm}{\hfill}%
\parbox[t]{15cm}{\(x(A_{0}x + A_{1}y + 2A_{1}z)yz(y + 2z + 2t)t(x + y + z + t)(x + z - t)\)}

\noindent\parbox[right]{9mm}{\textbf{253:}}\parbox{3mm}{\hfill}%
\parbox[t]{15cm}{\(xy(x + t)(y + t)z(A_{0}x + A_{1}y + (-A_{0} - A_{1})z)(z + t)((-A_{0} - A_{1})x + A_{1}y + (-A_{0} - A_{1})z + (-A_{0} - A_{1})t)\)}

\noindent\parbox[right]{9mm}{\textbf{254:}}\parbox{3mm}{\hfill}%
\parbox[t]{15cm}{\(xyz(A_{1}x + A_{0}y + A_{0}z)(A_{1}x + A_{0}y + A_{1}t)t(A_{0}x + A_{0}y + A_{1}z + A_{1}t)(x + y + z + t)\)}

\noindent\parbox[right]{9mm}{\textbf{255:}}\parbox{3mm}{\hfill}%
\parbox[t]{15cm}{\(x(A_{0}x + A_{1}y + A_{1}z)yz(A_{0}x + A_{1}y + A_{0}t)((A_{0} + A_{1})x + (A_{0} + A_{1})y + A_{0}z + A_{0}t)t(x + y + z + t)\)}

\noindent\parbox[right]{9mm}{\textbf{256:}}\parbox{3mm}{\hfill}%
\parbox[t]{15cm}{\(xyz(A_{0}x + A_{1}y - A_{0}z)(z + t)(A_{0}x + A_{1}y + A_{1}z + A_{1}t)(x + t)(y + t)\)}

\noindent\parbox[right]{9mm}{\textbf{257:}}\parbox{3mm}{\hfill}%
\parbox[t]{15cm}{\(xy(A_{0}x + A_{1}y + A_{1}z)z(A_{0}x + A_{1}y + (A_{0} - A_{1})t)t(A_{0}x + A_{0}y + (A_{0} - A_{1})z + (A_{0} - A_{1})t)(x + y + z + t)\)}

\noindent\parbox[right]{9mm}{\textbf{258:}}\parbox{3mm}{\hfill}%
\parbox[t]{15cm}{\(xyz(-2x + y + z)(x + t)(y + t)(z + t)(A_{0}x + A_{1}y + A_{1}z + A_{1}t)\)}

\noindent\parbox[right]{9mm}{\textbf{259:}}\parbox{3mm}{\hfill}%
\parbox[t]{15cm}{\(xy(x + t)(y + t)z(A_{0}x + A_{1}y + (-A_{0} - A_{1})z)(z + t)((-A_{0} - 2A_{1})x + A_{1}y + (-A_{0} - A_{1})z + (-A_{0} - A_{1})t)\)}

\noindent\parbox[right]{9mm}{\textbf{260:}}\parbox{3mm}{\hfill}%
\parbox[t]{15cm}{\((A_{0}A_{1}x + A_{0}A_{2}y + A_{1}A_{2}z + A_{2}^2t)xy(A_{1}x + A_{2}y + A_{1}z + A_{2}t)(A_{0}x + A_{0}y + A_{2}z + A_{2}t)zt(x + y + z + t)\)}

\noindent\parbox[right]{9mm}{\textbf{261:}}\parbox{3mm}{\hfill}%
\parbox[t]{15cm}{\(xyz((-A_{0} + 2A_{1})x + A_{0}y + A_{1}z)(2x + z + 2t)(A_{0}x + A_{0}y + A_{1}z + 2A_{1}t)t(x + y + z + t)\)}

\noindent\parbox[right]{9mm}{\textbf{262:}}\parbox{3mm}{\hfill}%
\parbox[t]{15cm}{\(x((A_{0} - A_{1})x + A_{0}y + A_{0}z)yz(A_{1}x + A_{1}y + A_{0}t)(A_{0}y + (A_{0} - A_{1})z + A_{0}t)t(x + y + z + t)\)}

\noindent\parbox[right]{9mm}{\textbf{263:}}\parbox{3mm}{\hfill}%
\parbox[t]{15cm}{\(xyz(A_{0}x + A_{2}y + A_{0}z)(A_{2}x + A_{1}y + A_{1}t)t(x + y + z + t)(A_{2}x + A_{2}y + A_{0}z + A_{1}t)\)}

\noindent\parbox[right]{9mm}{\textbf{264:}}\parbox{3mm}{\hfill}%
\parbox[t]{15cm}{\(x(A_{0}x + A_{1}y + 2A_{1}z)yz(y + 2z + 2t)t(x + y + z + t)(A_{0}x + A_{1}y + A_{0}z + (-A_{0} + 2A_{1})t)\)}

\noindent\parbox[right]{9mm}{\textbf{265:}}\parbox{3mm}{\hfill}%
\parbox[t]{15cm}{\(x(x + t)y(y + t)(A_{0}x + A_{1}y - A_{0}z)(-A_{0}x + A_{1}y - A_{0}z + (-A_{0} + A_{1})t)z(z + t)\)}

\noindent\parbox[right]{9mm}{\textbf{266:}}\parbox{3mm}{\hfill}%
\parbox[t]{15cm}{\(xyz(x - y + z)(A_{0}x + A_{1}y + A_{1}t)t(x + y + z + t)(A_{0}x + A_{1}z + A_{1}t)\)}

\noindent\parbox[right]{9mm}{\textbf{267:}}\parbox{3mm}{\hfill}%
\parbox[t]{15cm}{\(xyz(A_{0}^2x + (A_{0}A_{1} - A_{1}^2)y + A_{0}A_{1}z)((A_{0}^2 - A_{0}A_{1})x - A_{1}^2y - A_{1}^2t)t(x + y + z + t)(A_{0}x + A_{0}y + A_{1}z + A_{1}t)\)}

\noindent\parbox[right]{9mm}{\textbf{268:}}\parbox{3mm}{\hfill}%
\parbox[t]{15cm}{\(x(A_{0}x + A_{1}y + A_{1}z)yz((-A_{0} + A_{1})x + (-A_{0} + 2A_{1})y + A_{1}t)(A_{1}y + (A_{0} - A_{1})z + A_{1}t)t(x + y + z + t)\)}

\noindent\parbox[right]{9mm}{\textbf{269:}}\parbox{3mm}{\hfill}%
\parbox[t]{15cm}{\(x(A_{2}x + A_{1}y + A_{1}z + A_{2}t)(A_{0}A_{1}x + A_{1}A_{2}y + A_{0}A_{1}z + A_{0}A_{2}t)y(A_{1}A_{2}x + A_{1}A_{2}y + A_{0}A_{1}z + A_{2}^2t)zt(x + y + z + t)\)}

\noindent\parbox[right]{9mm}{\textbf{270:}}\parbox{3mm}{\hfill}%
\parbox[t]{15cm}{\(xy(A_{1}x + A_{1}y + 2A_{0}z)z(A_{0}x + (2A_{0} - A_{1})y + 2A_{0}t)t(x + y + z + t)(A_{0}x + A_{1}y + 2A_{0}z + 2A_{0}t)\)}

\noindent\parbox[right]{9mm}{\textbf{271:}}\parbox{3mm}{\hfill}%
\parbox[t]{15cm}{\(x(x + t)(A_{0}x + A_{1}y - A_{1}z)(A_{0}x + A_{2}y - A_{2}z + A_{0}t)y(y + t)z(z + t)\)}

\noindent\parbox[right]{9mm}{\textbf{272:}}\parbox{3mm}{\hfill}%
\parbox[t]{15cm}{\((A_{0}x + A_{1}y + (-A_{0} - A_{1})z)(A_{0}x + A_{1}y + (-A_{0} - A_{1})z + A_{2}t)x(x + t)y(y + t)z(z + t)\)}

\noindent\parbox[right]{9mm}{\textbf{273:}}\parbox{3mm}{\hfill}%
\parbox[t]{15cm}{\(xyz(A_{0}x + A_{1}y + (-A_{0} + A_{1})z)(z + t)(x + y + z + t)(x + t)(y + t)\)}

\noindent\parbox[right]{9mm}{\textbf{274:}}\parbox{3mm}{\hfill}%
\parbox[t]{15cm}{\(xyz(A_{0}x + A_{1}y - A_{0}z)(z + t)((A_{0} + A_{1})x - A_{0}y - A_{0}z - A_{0}t)(x + t)(y + t)\)}

\noindent\parbox[right]{9mm}{\textbf{275:}}\parbox{3mm}{\hfill}%
\parbox[t]{15cm}{\(xy(A_{1}x + (-A_{0} + A_{1})y + A_{1}z)zt((-A_{0}^2 + A_{0}A_{1})x + (-A_{0}^2 + A_{0}A_{1})y + A_{1}^2t)(A_{1}^2x + (-A_{0}^2 + A_{0}A_{1})y + A_{0}A_{1}z + A_{1}^2t)(x + y + z + t)\)}

\noindent\parbox[right]{9mm}{\textbf{276:}}\parbox{3mm}{\hfill}%
\parbox[t]{15cm}{\(xy(A_{1}x + (A_{0} + A_{1})y + (A_{0} + A_{1})z)zt(A_{0}x + A_{0}y + A_{1}t)(x + y + z + t)(A_{1}x + A_{0}y + (A_{0} + A_{1})z + A_{1}t)\)}

\noindent\parbox[right]{9mm}{\textbf{277:}}\parbox{3mm}{\hfill}%
\parbox[t]{15cm}{\(xy(x + t)(y + t)z(A_{0}x + A_{1}y + A_{2}z)(z + t)((-A_{1} + A_{2})x + A_{1}y + A_{2}z + A_{2}t)\)}

\noindent\parbox[right]{9mm}{\textbf{278:}}\parbox{3mm}{\hfill}%
\parbox[t]{15cm}{\(xyz(A_{0}x + A_{1}y + (-A_{0} - A_{1})z)(z + t)(A_{0}x + A_{2}y + (A_{0} + A_{2})z + (A_{0} + A_{2})t)(x + t)(y + t)\)}

\noindent\parbox[right]{9mm}{\textbf{279:}}\parbox{3mm}{\hfill}%
\parbox[t]{15cm}{\(xyz(A_{0}x + A_{1}y + A_{1}z)(A_{0}x + A_{1}y + A_{0}t)t(A_{2}x + A_{1}y + A_{0}z + A_{0}t)(x + y + z + t)\)}

\noindent\parbox[right]{9mm}{\textbf{280:}}\parbox{3mm}{\hfill}%
\parbox[t]{15cm}{\(xyz(A_{0}x + A_{1}y - A_{0}z)(z + t)(A_{0}x + A_{2}y + A_{2}z + A_{2}t)(x + t)(y + t)\)}

\noindent\parbox[right]{9mm}{\textbf{281:}}\parbox{3mm}{\hfill}%
\parbox[t]{15cm}{\(xyz(A_{0}x + A_{1}y + A_{0}z)t(A_{0}x + A_{2}y + A_{2}t)((-A_{0} + A_{2})x + A_{2}z + A_{2}t)(x + y + z + t)\)}

\noindent\parbox[right]{9mm}{\textbf{282:}}\parbox{3mm}{\hfill}%
\parbox[t]{15cm}{\(x(x + t)(x + y - z)(A_{0}x + A_{1}y - A_{1}z + A_{2}t)y(y + t)z(z + t)\)}

\noindent\parbox[right]{9mm}{\textbf{283:}}\parbox{3mm}{\hfill}%
\parbox[t]{15cm}{\(x((A_{0} + A_{2})x + A_{0}y + A_{0}z)yzt(x + y + z + t)(A_{1}x + A_{2}y + A_{1}t)((A_{0} + A_{2})x + (A_{0} + A_{2})y + A_{0}z + A_{1}t)\)}

\noindent\parbox[right]{9mm}{\textbf{284:}}\parbox{3mm}{\hfill}%
\parbox[t]{15cm}{\(xyz(A_{0}x + A_{1}y + A_{1}z)(x + t)(y + t)(z + t)(A_{0}x + A_{2}y + A_{2}z + A_{2}t)\)}

\noindent\parbox[right]{9mm}{\textbf{285:}}\parbox{3mm}{\hfill}%
\parbox[t]{15cm}{\(xyz(A_{0}x + A_{1}y + A_{1}z)(A_{2}x + A_{1}y + A_{2}t)t(A_{0}x + A_{1}y + A_{2}z + A_{2}t)(x + y + z + t)\)}

\noindent\parbox[right]{9mm}{\textbf{286:}}\parbox{3mm}{\hfill}%
\parbox[t]{15cm}{\(x(A_{0}x + A_{1}z + 2A_{1}t)(2y + z + 2t)yzt(x + y + z + t)(A_{0}x + (-A_{0} + 2A_{2})y + A_{2}z + A_{0}t)\)}

\noindent\parbox[right]{9mm}{\textbf{287:}}\parbox{3mm}{\hfill}%
\parbox[t]{15cm}{\(x(A_{0}x + A_{1}y + A_{1}z + A_{2}t)(A_{1}x + A_{1}z + A_{2}t)yz(A_{1}x + A_{1}y + A_{3}z + A_{2}t)t(x + y + z + t)\)}

\noindent\parbox[right]{9mm}{\textbf{288:}}\parbox{3mm}{\hfill}%
\parbox[t]{15cm}{\(xyz(A_{2}x + (-A_{1} + A_{2})y + (-A_{1} + A_{2})z)(A_{0}x + A_{0}y + A_{2}t)t(A_{0}y + A_{1}z + A_{2}t)(x + y + z + t)\)}

\noindent\parbox[right]{9mm}{\textbf{289:}}\parbox{3mm}{\hfill}%
\parbox[t]{15cm}{\(xyz(A_{0}x + A_{0}y + A_{2}z)(A_{1}x + A_{2}y + A_{2}t)t(x + y + z + t)(A_{1}^2x + (2A_{1}A_{2} - A_{2}^2)y + A_{1}A_{2}z + A_{1}A_{2}t)\)}

\noindent\parbox[right]{9mm}{\textbf{290:}}\parbox{3mm}{\hfill}%
\parbox[t]{15cm}{\((A_{0}x + A_{1}y + A_{0}z)xyzt(A_{2}x + A_{1}y + A_{0}z + A_{2}t)(x + y + z + t)((A_{0} - A_{1})x + (A_{0} - A_{1})y + A_{0}t)\)}

\noindent\parbox[right]{9mm}{\textbf{291:}}\parbox{3mm}{\hfill}%
\parbox[t]{15cm}{\(xyz((A_{0} - A_{2})x + (A_{1} - A_{2})y + (A_{1} - A_{2})z)(A_{2}x + A_{1}y + A_{2}t)t(A_{0}x + A_{1}y + A_{2}z + A_{2}t)(x + y + z + t)\)}

\noindent\parbox[right]{9mm}{\textbf{292:}}\parbox{3mm}{\hfill}%
\parbox[t]{15cm}{\(xyz(A_{0}x + A_{2}y + A_{0}z)(A_{1}x + A_{0}z + A_{1}t)(A_{2}x + A_{2}y + A_{0}z + A_{1}t)t(x + y + z + t)\)}

\noindent\parbox[right]{9mm}{\textbf{293:}}\parbox{3mm}{\hfill}%
\parbox[t]{15cm}{\(x(A_{0}x + A_{1}y + A_{1}z)yzt(x + y + z + t)(A_{0}x + A_{2}y + A_{0}t)(A_{0}y + (A_{0} - A_{2})z + A_{0}t)\)}

\noindent\parbox[right]{9mm}{\textbf{294:}}\parbox{3mm}{\hfill}%
\parbox[t]{15cm}{\(x(A_{0}x + A_{1}y + A_{1}z)yz(A_{0}x + A_{2}y + A_{0}t)((A_{0} - A_{2})x + A_{0}y + (A_{0} - A_{2})z + A_{0}t)t(x + y + z + t)\)}

\noindent\parbox[right]{9mm}{\textbf{295:}}\parbox{3mm}{\hfill}%
\parbox[t]{15cm}{\(xyz(A_{0}x + A_{1}y + A_{1}z)(A_{0}x + A_{2}y + A_{0}t)t(A_{2}x + A_{2}y + A_{0}z + A_{0}t)(x + y + z + t)\)}

\noindent\parbox[right]{9mm}{\textbf{296:}}\parbox{3mm}{\hfill}%
\parbox[t]{15cm}{\(xyz(A_{0}x + A_{1}y + A_{1}z)(A_{0}x + A_{2}y + A_{2}t)t(x + y + z + t)(A_{0}x + A_{0}y + A_{1}z + A_{2}t)\)}

\noindent\parbox[right]{9mm}{\textbf{297:}}\parbox{3mm}{\hfill}%
\parbox[t]{15cm}{\(xy(A_{0}x + A_{1}y + A_{1}z)z(A_{0}x + A_{0}y + A_{2}t)t(A_{0}x + (A_{0} + A_{2})y + A_{2}z + A_{2}t)(x + y + z + t)\)}

\noindent\parbox[right]{9mm}{\textbf{298:}}\parbox{3mm}{\hfill}%
\parbox[t]{15cm}{\(xyz(A_{0}x + A_{1}y + A_{0}z)(A_{0}x + A_{1}y + A_{2}t)t(A_{1}x + A_{1}y + A_{2}z + A_{2}t)(x + y + z + t)\)}

\noindent\parbox[right]{9mm}{\textbf{299:}}\parbox{3mm}{\hfill}%
\parbox[t]{15cm}{\(xyz(A_{0}x + A_{0}y + A_{1}z)t(A_{0}x + A_{2}y + A_{0}t)((-A_{0} + A_{2})x + A_{2}z + A_{2}t)(x + y + z + t)\)}

\noindent\parbox[right]{9mm}{\textbf{300:}}\parbox{3mm}{\hfill}%
\parbox[t]{15cm}{\(x(A_{0}x + A_{1}y + A_{1}z)yz((A_{0} + A_{2})x + (A_{0} + A_{2})y + A_{2}t)((A_{0} + A_{2})x + (A_{0} + A_{1} + 2A_{2})y + (A_{1} + A_{2})z + A_{2}t)t(x + y + z + t)\)}

\noindent\parbox[right]{9mm}{\textbf{301:}}\parbox{3mm}{\hfill}%
\parbox[t]{15cm}{\(x(A_{2}x + (-A_{1} + A_{2})y + (-A_{1} + A_{2})z)yzt(x + y + z + t)((A_{0} - A_{1})x + (A_{0} - A_{1})y + A_{2}t)(A_{0}y + A_{1}z + A_{2}t)\)}

\noindent\parbox[right]{9mm}{\textbf{302:}}\parbox{3mm}{\hfill}%
\parbox[t]{15cm}{\(xyz((A_{0} - A_{2})x + (A_{1} - A_{2})y + (A_{1} - A_{2})z)(A_{2}x + A_{0}y + A_{2}t)t(A_{0}x + A_{0}y + A_{1}z + A_{2}t)(x + y + z + t)\)}

\noindent\parbox[right]{9mm}{\textbf{303:}}\parbox{3mm}{\hfill}%
\parbox[t]{15cm}{\(xy(A_{0}x + A_{1}y + A_{1}z)zt(A_{0}x + A_{0}y + A_{2}t)(x + y + z + t)((A_{0} + A_{2})y + A_{2}z + A_{2}t)\)}

\noindent\parbox[right]{9mm}{\textbf{304:}}\parbox{3mm}{\hfill}%
\parbox[t]{15cm}{\(xy(A_{1}x + (-A_{0} + A_{1})y + A_{1}z)z(A_{0}x + A_{0}y + A_{1}t)t(x + y + z + t)(A_{2}x + A_{0}z + A_{2}t)\)}

\noindent\parbox[right]{9mm}{\textbf{305:}}\parbox{3mm}{\hfill}%
\parbox[t]{15cm}{\(xyz(x - y + z)(A_{0}x + A_{1}y + A_{1}t)t(x + y + z + t)(A_{0}x + A_{2}z + A_{2}t)\)}

\noindent\parbox[right]{9mm}{\textbf{306:}}\parbox{3mm}{\hfill}%
\parbox[t]{15cm}{\(x(A_{0}x + A_{0}y + (A_{0} + A_{1})z)yz(A_{1}x + A_{2}y + (A_{0} + A_{1})t)(-A_{0}x + A_{2}z - A_{0}t)t(x + y + z + t)\)}

\noindent\parbox[right]{9mm}{\textbf{307:}}\parbox{3mm}{\hfill}%
\parbox[t]{15cm}{\(x(x + t)y(y + t)z(z + t)(A_{0}x - A_{0}y + A_{1}z)(A_{2}x - A_{0}y + A_{1}z + A_{1}t)\)}

\noindent\parbox[right]{9mm}{\textbf{308:}}\parbox{3mm}{\hfill}%
\parbox[t]{15cm}{\(xy(A_{1}x + A_{0}y + A_{1}z)z(-A_{0}x - A_{0}y + A_{2}t)t(A_{2}x + A_{1}z + A_{2}t)(x + y + z + t)\)}

\noindent\parbox[right]{9mm}{\textbf{309:}}\parbox{3mm}{\hfill}%
\parbox[t]{15cm}{\(xyz(A_{0}x + A_{1}y + A_{2}z)(z + t)(x + y + z + t)(x + t)(y + t)\)}

\noindent\parbox[right]{9mm}{\textbf{310:}}\parbox{3mm}{\hfill}%
\parbox[t]{15cm}{\(xyz(A_{0}x + A_{1}y + A_{0}z)t(A_{0}x + A_{2}y + A_{2}t)((A_{0} + A_{2})x + A_{2}z + A_{2}t)(x + y + z + t)\)}

\noindent\parbox[right]{9mm}{\textbf{311:}}\parbox{3mm}{\hfill}%
\parbox[t]{15cm}{\(xyz((A_{0} - A_{2})x + (A_{1} - A_{2})y + (A_{0} - A_{2})z)(A_{0}x + A_{1}y + A_{2}t)t(A_{1}x + A_{1}y + A_{2}z + A_{2}t)(x + y + z + t)\)}

\noindent\parbox[right]{9mm}{\textbf{312:}}\parbox{3mm}{\hfill}%
\parbox[t]{15cm}{\(xy(y + t)(x + t)(A_{0}x + A_{1}y - A_{1}z)z(z + t)(A_{2}x + A_{0}y + A_{1}z + (A_{0} + A_{2})t)\)}

\noindent\parbox[right]{9mm}{\textbf{313:}}\parbox{3mm}{\hfill}%
\parbox[t]{15cm}{\(x(A_{0}x + A_{1}y + A_{1}z)((-A_{0} + A_{1})x + A_{1}y + A_{1}t)(A_{1}y + A_{2}z + (A_{1} - A_{2})t)yzt(x + y + z + t)\)}

\noindent\parbox[right]{9mm}{\textbf{314:}}\parbox{3mm}{\hfill}%
\parbox[t]{15cm}{\(xy(y + t)(x + t)(A_{0}x + A_{1}y - A_{1}z)z(z + t)(A_{0}x + A_{2}y + A_{2}z + (A_{0} + A_{2})t)\)}

\noindent\parbox[right]{9mm}{\textbf{315:}}\parbox{3mm}{\hfill}%
\parbox[t]{15cm}{\(xyz(A_{0}x + A_{1}y + (-A_{0} - A_{1})z)(y + t)(x + t)(A_{0}x + A_{2}y + A_{2}z + A_{2}t)(z + t)\)}

\noindent\parbox[right]{9mm}{\textbf{316:}}\parbox{3mm}{\hfill}%
\parbox[t]{15cm}{\(xyz(A_{1}x + A_{1}y + (A_{1} - A_{2})z)t(A_{0}x + A_{1}y + A_{2}t)((-A_{0} + A_{1})x + (A_{1} - A_{2})z + (A_{1} - A_{2})t)(x + y + z + t)\)}

\noindent\parbox[right]{9mm}{\textbf{317:}}\parbox{3mm}{\hfill}%
\parbox[t]{15cm}{\(xy(A_{0}^2x + A_{1}A_{2}y + (A_{1}^2 + A_{1}A_{2})z)z(A_{0}^2x + A_{1}A_{2}y + (A_{1}A_{2} + A_{2}^2)t)t(x + y + z + t)(A_{0}^2x + A_{0}^2y + A_{1}A_{2}z + A_{1}A_{2}t)\)}

\noindent\parbox[right]{9mm}{\textbf{318:}}\parbox{3mm}{\hfill}%
\parbox[t]{15cm}{\(xy(y + t)(x + t)(x + y - z)z(z + t)(A_{0}x + A_{1}y + A_{2}z + (A_{0} + A_{1} + 2A_{2})t)\)}

\noindent\parbox[right]{9mm}{\textbf{319:}}\parbox{3mm}{\hfill}%
\parbox[t]{15cm}{\(xyz(A_{0}x + A_{1}y + A_{0}z)(A_{0}x + A_{2}y + A_{2}t)t(x + y + z + t)(A_{0}x + A_{2}z + A_{2}t)\)}

\noindent\parbox[right]{9mm}{\textbf{320:}}\parbox{3mm}{\hfill}%
\parbox[t]{15cm}{\(xy(A_{2}y + A_{0}z + A_{1}t)(A_{0}x + A_{1}y + A_{0}z + A_{1}t)(A_{2}x + A_{2}y + A_{0}z)zt(x + y + z + t)\)}

\noindent\parbox[right]{9mm}{\textbf{321:}}\parbox{3mm}{\hfill}%
\parbox[t]{15cm}{\(xyz(A_{0}x + A_{1}y - A_{1}z)(z + t)(A_{0}x + A_{2}y + A_{2}z + A_{2}t)(x + t)(y + t)\)}

\noindent\parbox[right]{9mm}{\textbf{322:}}\parbox{3mm}{\hfill}%
\parbox[t]{15cm}{\(xyz(A_{0}x + A_{1}y + A_{2}z)(A_{0}x + A_{1}y + A_{1}t)t(x + y + z + t)(A_{1}x + A_{1}y + A_{2}z + A_{2}t)\)}

\noindent\parbox[right]{9mm}{\textbf{323:}}\parbox{3mm}{\hfill}%
\parbox[t]{15cm}{\(xyz((A_{0} - A_{1})x + A_{0}y + A_{0}z)(A_{1}x + A_{1}y + A_{0}t)t(A_{1}y + A_{2}z + A_{0}t)(x + y + z + t)\)}

\noindent\parbox[right]{9mm}{\textbf{324:}}\parbox{3mm}{\hfill}%
\parbox[t]{15cm}{\(xy(A_{0}A_{2}x + A_{0}A_{1}z + (A_{0}A_{1} + A_{1}A_{2})t)((A_{0}A_{1} + A_{1}A_{2})x + (A_{0}A_{1} + A_{1}A_{2} - A_{2}^2)y + A_{0}A_{1}z + (A_{0}A_{1} + A_{1}A_{2})t)z(A_{2}x + A_{2}y + A_{1}z)t(x + y + z + t)\)}

\noindent\parbox[right]{9mm}{\textbf{325:}}\parbox{3mm}{\hfill}%
\parbox[t]{15cm}{\(xyz(A_{0}x + (A_{1} + A_{2})y + A_{0}z)((A_{1}^2 + A_{1}A_{2})x + A_{0}A_{2}z + A_{1}A_{2}t)((A_{1}A_{2} + A_{2}^2)y + A_{0}A_{2}z + A_{1}A_{2}t)t(x + y + z + t)\)}

\noindent\parbox[right]{9mm}{\textbf{326:}}\parbox{3mm}{\hfill}%
\parbox[t]{15cm}{\(xy(A_{2}x + A_{0}y + A_{0}z)z((A_{0} - A_{1})x + A_{1}y + (A_{0} - A_{1})t)t(A_{2}x + A_{0}y + (A_{0} - A_{1})z + (A_{0} - A_{1})t)(x + y + z + t)\)}

\noindent\parbox[right]{9mm}{\textbf{327:}}\parbox{3mm}{\hfill}%
\parbox[t]{15cm}{\(xyz((-A_{0} + A_{2})x + (-A_{0} + A_{2})y + A_{2}z)t(A_{0}x + A_{1}y + A_{2}t)((A_{0} - A_{1})x + A_{2}z + A_{2}t)(x + y + z + t)\)}

\noindent\parbox[right]{9mm}{\textbf{328:}}\parbox{3mm}{\hfill}%
\parbox[t]{15cm}{\(xy(A_{1}x + A_{2}y + A_{2}z)zt(A_{2}x + A_{0}y + A_{2}t)(x + y + z + t)(A_{1}x + A_{0}y + A_{2}z + A_{2}t)\)}

\noindent\parbox[right]{9mm}{\textbf{329:}}\parbox{3mm}{\hfill}%
\parbox[t]{15cm}{\(xyz(A_{0}x - A_{0}y + A_{1}z)(z + t)(A_{0}x + A_{2}y + A_{3}z + A_{3}t)(x + t)(y + t)\)}

\noindent\parbox[right]{9mm}{\textbf{330:}}\parbox{3mm}{\hfill}%
\parbox[t]{15cm}{\(xyz(A_{0}x + A_{0}y + A_{1}z)t(A_{0}x + A_{2}y + A_{0}t)(A_{0}x + A_{3}z + A_{3}t)(x + y + z + t)\)}

\noindent\parbox[right]{9mm}{\textbf{331:}}\parbox{3mm}{\hfill}%
\parbox[t]{15cm}{\(xyz(A_{2}x + A_{0}y + A_{1}z)(z + t)(A_{2}x + A_{3}y + (A_{2} + A_{3})z + (A_{2} + A_{3})t)(x + t)(y + t)\)}

\noindent\parbox[right]{9mm}{\textbf{332:}}\parbox{3mm}{\hfill}%
\parbox[t]{15cm}{\(xy(A_{3}x + A_{0}y + A_{0}z)zt(A_{1}x + A_{2}y + A_{3}t)(x + y + z + t)(A_{3}x + A_{2}y + A_{0}z + A_{3}t)\)}

\noindent\parbox[right]{9mm}{\textbf{333:}}\parbox{3mm}{\hfill}%
\parbox[t]{15cm}{\(xyz(A_{0}x + A_{1}y + A_{2}z)(A_{0}x + A_{1}y + A_{1}t)t(x + y + z + t)(A_{0}x + A_{0}y + A_{3}z + A_{3}t)\)}

\noindent\parbox[right]{9mm}{\textbf{334:}}\parbox{3mm}{\hfill}%
\parbox[t]{15cm}{\(x(x + t)((-A_{0} - A_{1})x + A_{0}y + A_{1}z)(A_{2}x + A_{0}y + A_{1}z + A_{3}t)y(y + t)z(z + t)\)}

\noindent\parbox[right]{9mm}{\textbf{335:}}\parbox{3mm}{\hfill}%
\parbox[t]{15cm}{\(xyz(A_{0}x + A_{1}y + A_{1}z)(A_{0}x + A_{0}y + A_{2}t)t(A_{0}y + A_{3}z + A_{2}t)(x + y + z + t)\)}

\noindent\parbox[right]{9mm}{\textbf{336:}}\parbox{3mm}{\hfill}%
\parbox[t]{15cm}{\(xyz(A_{0}x + A_{1}y + A_{2}z)(z + t)(A_{0}x + A_{1}y + A_{3}z + A_{3}t)(x + t)(y + t)\)}

\noindent\parbox[right]{9mm}{\textbf{337:}}\parbox{3mm}{\hfill}%
\parbox[t]{15cm}{\(x(x + t)y(y + t)z(z + t)(A_{0}x + A_{1}y + A_{2}z)(A_{3}x + A_{1}y + A_{2}z + (A_{1} + A_{2} + A_{3})t)\)}

\noindent\parbox[right]{9mm}{\textbf{338:}}\parbox{3mm}{\hfill}%
\parbox[t]{15cm}{\(xy(A_{0}x + A_{1}y + A_{0}z)z((A_{0} - A_{1})x + (A_{0} - A_{1})y + A_{0}t)t(x + y + z + t)(A_{0}x + A_{2}z + A_{3}t)\)}

\noindent\parbox[right]{9mm}{\textbf{339:}}\parbox{3mm}{\hfill}%
\parbox[t]{15cm}{\(xyz((-A_{1} + A_{2})x + (-A_{1} + A_{2})y + A_{2}z)t(A_{0}x + A_{1}y + A_{2}t)(A_{0}x + A_{3}z + A_{3}t)(x + y + z + t)\)}

\noindent\parbox[right]{9mm}{\textbf{340:}}\parbox{3mm}{\hfill}%
\parbox[t]{15cm}{\(xyz(A_{0}x + A_{1}y + A_{1}z)(A_{0}x + A_{2}y + A_{0}t)t(A_{2}x + A_{2}y + A_{3}z + A_{0}t)(x + y + z + t)\)}

\noindent\parbox[right]{9mm}{\textbf{341:}}\parbox{3mm}{\hfill}%
\parbox[t]{15cm}{\(xyz(A_{0}x + A_{1}y + A_{2}z)t((-A_{0} + A_{3})x + A_{3}y + A_{3}t)(A_{0}x + A_{3}z + A_{3}t)(x + y + z + t)\)}

\noindent\parbox[right]{9mm}{\textbf{342:}}\parbox{3mm}{\hfill}%
\parbox[t]{15cm}{\(xyz(A_{0}x + A_{1}y + A_{2}z)t((-A_{1} + A_{2})x + (-A_{1} + A_{2})y + A_{2}t)(A_{0}x + A_{3}z + A_{0}t)(x + y + z + t)\)}

\noindent\parbox[right]{9mm}{\textbf{343:}}\parbox{3mm}{\hfill}%
\parbox[t]{15cm}{\(xyz(A_{0}x + A_{1}y + A_{2}z)((A_{0} - A_{2})x + (A_{1} - A_{2})y + (A_{1} - A_{2})t)t(x + y + z + t)(A_{3}x + A_{1}y + A_{2}z + A_{3}t)\)}

\noindent\parbox[right]{9mm}{\textbf{344:}}\parbox{3mm}{\hfill}%
\parbox[t]{15cm}{\(xyz(A_{0}x + A_{1}y + A_{1}z)(A_{0}x + A_{2}y + A_{2}t)t(x + y + z + t)(A_{3}y + (A_{0} - A_{1})z + A_{0}t)\)}

\noindent\parbox[right]{9mm}{\textbf{345:}}\parbox{3mm}{\hfill}%
\parbox[t]{15cm}{\(xyz(A_{0}x + A_{0}y + A_{1}z)t((A_{0} - A_{1} + A_{2})x + (A_{0} - A_{1})y - A_{1}t)(A_{2}x + (-A_{0} + A_{1})z + A_{3}t)(x + y + z + t)\)}

\noindent\parbox[right]{9mm}{\textbf{346:}}\parbox{3mm}{\hfill}%
\parbox[t]{15cm}{\(x(x + t)y(y + t)z(z + t)(A_{0}x + A_{1}y + A_{2}z)(A_{3}x + A_{1}y + A_{2}z + (A_{2} + A_{3})t)\)}

\noindent\parbox[right]{9mm}{\textbf{347:}}\parbox{3mm}{\hfill}%
\parbox[t]{15cm}{\(xyz(A_{0}x + A_{1}y + A_{0}z)(A_{2}x + A_{0}z + A_{2}t)(A_{3}x + A_{1}y + A_{0}z + A_{2}t)t(x + y + z + t)\)}

\noindent\parbox[right]{9mm}{\textbf{348:}}\parbox{3mm}{\hfill}%
\parbox[t]{15cm}{\(xyz(A_{0}x + A_{1}y + A_{1}z)(A_{0}x + A_{2}y + A_{2}t)t(x + y + z + t)(A_{0}x + A_{3}y + A_{1}z + A_{2}t)\)}

\noindent\parbox[right]{9mm}{\textbf{349:}}\parbox{3mm}{\hfill}%
\parbox[t]{15cm}{\(xy(A_{1}x + A_{0}y + A_{0}z)zt(A_{0}x + A_{0}y + A_{3}t)(x + y + z + t)(A_{1}x + A_{0}y + A_{2}z + A_{3}t)\)}

\noindent\parbox[right]{9mm}{\textbf{350:}}\parbox{3mm}{\hfill}%
\parbox[t]{15cm}{\(xyz(A_{0}x + A_{0}y + A_{1}z)t(A_{0}x + A_{2}y + A_{3}t)((A_{0} - A_{2})x + A_{3}z + A_{3}t)(x + y + z + t)\)}

\noindent\parbox[right]{9mm}{\textbf{351:}}\parbox{3mm}{\hfill}%
\parbox[t]{15cm}{\(xyz(A_{0}x + A_{1}y + A_{1}z)(A_{2}x + A_{3}y + A_{0}t)t(A_{3}y + (A_{0} - A_{1})z + A_{0}t)(x + y + z + t)\)}

\noindent\parbox[right]{9mm}{\textbf{352:}}\parbox{3mm}{\hfill}%
\parbox[t]{15cm}{\(xyz(A_{0}x + A_{1}y + A_{2}z)t((-A_{1} + A_{2} - A_{3})x + (-A_{1} + A_{2})y + A_{2}t)(A_{3}x + (-A_{1} + A_{2})z + A_{3}t)(x + y + z + t)\)}

\noindent\parbox[right]{9mm}{\textbf{353:}}\parbox{3mm}{\hfill}%
\parbox[t]{15cm}{\(xyz((A_{1} - A_{3})x + A_{2}y + A_{0}z)(z + t)(A_{1}x + A_{2}y + A_{3}z + A_{3}t)(x + t)(y + t)\)}

\noindent\parbox[right]{9mm}{\textbf{354:}}\parbox{3mm}{\hfill}%
\parbox[t]{15cm}{\(xyz(A_{0}x + A_{3}y + A_{0}z)(A_{2}x + A_{1}y + A_{1}t)t(x + y + z + t)(A_{2}x + A_{3}y + A_{0}z + A_{1}t)\)}

\noindent\parbox[right]{9mm}{\textbf{355:}}\parbox{3mm}{\hfill}%
\parbox[t]{15cm}{\(xyz(A_{0}x + A_{1}y + A_{0}z)t(A_{0}x + A_{2}y + A_{2}t)(A_{0}x + A_{3}z + A_{3}t)(x + y + z + t)\)}

\noindent\parbox[right]{9mm}{\textbf{356:}}\parbox{3mm}{\hfill}%
\parbox[t]{15cm}{\(xy(x + t)(y + t)z(A_{0}x + A_{1}y + A_{2}z)(z + t)(A_{3}x + A_{1}y + A_{2}z + A_{2}t)\)}

\noindent\parbox[right]{9mm}{\textbf{357:}}\parbox{3mm}{\hfill}%
\parbox[t]{15cm}{\(xyz(A_{0}x + A_{0}y + A_{1}z)t((-A_{0} + A_{1})x + A_{2}y + A_{1}t)(A_{0}x + A_{3}z + A_{3}t)(x + y + z + t)\)}

\noindent\parbox[right]{9mm}{\textbf{358:}}\parbox{3mm}{\hfill}%
\parbox[t]{15cm}{\(xyz(A_{0}x + A_{0}y + A_{1}z)t(A_{0}x + A_{2}y + A_{3}t)((A_{0} - A_{2})x + (-A_{2} + A_{3})z + (-A_{2} + A_{3})t)(x + y + z + t)\)}

\noindent\parbox[right]{9mm}{\textbf{359:}}\parbox{3mm}{\hfill}%
\parbox[t]{15cm}{\(xy(A_{0}x + A_{1}y + A_{1}z)zt((A_{2} - A_{3})x + (A_{2} - A_{3})y + A_{3}t)(x + y + z + t)(A_{0}x + A_{2}y + A_{3}z + A_{3}t)\)}

\noindent\parbox[right]{9mm}{\textbf{360:}}\parbox{3mm}{\hfill}%
\parbox[t]{15cm}{\(xyz(A_{0}x + A_{1}y + A_{2}z)(z + t)(A_{0}x + A_{3}y + A_{3}z + A_{3}t)(x + t)(y + t)\)}

\noindent\parbox[right]{9mm}{\textbf{361:}}\parbox{3mm}{\hfill}%
\parbox[t]{15cm}{\(xyz(A_{0}x + A_{1}y + A_{2}z)((A_{0} - A_{2})x + (A_{1} - A_{2})y + (A_{1} - A_{2})t)t(x + y + z + t)(A_{1}y + A_{3}z + A_{3}t)\)}

\noindent\parbox[right]{9mm}{\textbf{362:}}\parbox{3mm}{\hfill}%
\parbox[t]{15cm}{\(xy(y + t)(x + t)(A_{0}x + A_{1}y - A_{1}z)z(z + t)(A_{0}x + A_{2}y + A_{3}z + (A_{0} + A_{3})t)\)}

\noindent\parbox[right]{9mm}{\textbf{363:}}\parbox{3mm}{\hfill}%
\parbox[t]{15cm}{\(xy((-A_{2} + A_{3})x + A_{0}y + A_{0}z)zt(A_{1}x + A_{2}y + A_{3}t)(x + y + z + t)(A_{2}x + A_{2}y + (-A_{0} + A_{3})z + A_{3}t)\)}

\noindent\parbox[right]{9mm}{\textbf{364:}}\parbox{3mm}{\hfill}%
\parbox[t]{15cm}{\(xyz(A_{0}x + (A_{2} - A_{3})y + A_{3}z)(z + t)(A_{1}x + A_{2}y + A_{3}z + A_{3}t)(x + t)(y + t)\)}

\noindent\parbox[right]{9mm}{\textbf{365:}}\parbox{3mm}{\hfill}%
\parbox[t]{15cm}{\(x(A_{0}x + A_{1}y + A_{1}z)yz(A_{3}x + (A_{0} - A_{2})y + A_{3}t)(A_{0}x + A_{0}y + A_{2}z + A_{3}t)t(x + y + z + t)\)}

\noindent\parbox[right]{9mm}{\textbf{366:}}\parbox{3mm}{\hfill}%
\parbox[t]{15cm}{\(x(x + t)y(y + t)z(z + t)(A_{0}x + A_{1}y + (-A_{1} - A_{2})z)(A_{0}A_{3}x + A_{0}A_{1}y + (-A_{0}A_{1} - A_{0}A_{2})z + (-A_{0}A_{2} + A_{2}A_{3})t)\)}

\noindent\parbox[right]{9mm}{\textbf{367:}}\parbox{3mm}{\hfill}%
\parbox[t]{15cm}{\(xy(A_{0}x + A_{1}y + A_{1}z)zt((-A_{0} + A_{1})x + A_{1}y + A_{1}t)(x + y + z + t)(A_{1}y + A_{2}z + A_{3}t)\)}

\noindent\parbox[right]{9mm}{\textbf{368:}}\parbox{3mm}{\hfill}%
\parbox[t]{15cm}{\(xy(A_{0}x + A_{1}y + A_{1}z)zt((-A_{0} + A_{1})x + A_{2}y + A_{1}t)(x + y + z + t)(A_{2}x + A_{2}y + A_{3}z + A_{1}t)\)}

\noindent\parbox[right]{9mm}{\textbf{369:}}\parbox{3mm}{\hfill}%
\parbox[t]{15cm}{\(xy(A_{0}x + A_{1}y + A_{1}z)zt(A_{2}x + A_{3}y + A_{3}t)(x + y + z + t)((A_{0} + A_{2})x + (A_{0} + A_{2})y + A_{1}z + A_{3}t)\)}

\noindent\parbox[right]{9mm}{\textbf{370:}}\parbox{3mm}{\hfill}%
\parbox[t]{15cm}{\(xyz(A_{0}x + A_{1}y + A_{0}z)t(A_{2}x - A_{1}y + A_{2}t)(A_{3}x + A_{0}z + A_{2}t)(x + y + z + t)\)}

\noindent\parbox[right]{9mm}{\textbf{371:}}\parbox{3mm}{\hfill}%
\parbox[t]{15cm}{\(xyz(A_{0}x + A_{1}y + A_{2}z)t(A_{3}x + (-A_{1} + A_{2})y + A_{2}t)(A_{3}x + A_{3}z + A_{2}t)(x + y + z + t)\)}

\noindent\parbox[right]{9mm}{\textbf{372:}}\parbox{3mm}{\hfill}%
\parbox[t]{15cm}{\(x(A_{0}x + A_{2}y + A_{3}z)yzt(A_{1}x + A_{2}y + A_{3}z + A_{2}t)((A_{1} - A_{3})x + (A_{2} - A_{3})y + (A_{1} - A_{3})t)(x + y + z + t)\)}

\noindent\parbox[right]{9mm}{\textbf{373:}}\parbox{3mm}{\hfill}%
\parbox[t]{15cm}{\(xyz(A_{0}x + A_{1}y + A_{1}z)(2A_{2}x + A_{2}y + A_{3}t)t(A_{2}y + 2A_{2}z + A_{3}t)(x + y + z + t)\)}

\noindent\parbox[right]{9mm}{\textbf{374:}}\parbox{3mm}{\hfill}%
\parbox[t]{15cm}{\(x(A_{0}A_{2}x + A_{1}A_{3}y + A_{0}A_{2}z + A_{1}^2t)(A_{0}A_{3}x + A_{0}A_{3}y + A_{0}A_{2}z + A_{1}^2t)y(A_{1}x + A_{3}y + A_{2}z + A_{1}t)zt(x + y + z + t)\)}

\noindent\parbox[right]{9mm}{\textbf{375:}}\parbox{3mm}{\hfill}%
\parbox[t]{15cm}{\(xyz(A_{0}x + A_{1}y + (A_{0} - A_{2} + A_{3})z)(A_{2}x + A_{3}y + A_{3}t)t(x + y + z + t)(A_{2}x + (-A_{0} + A_{1} + A_{2})y + A_{3}z + A_{3}t)\)}

\noindent\parbox[right]{9mm}{\textbf{376:}}\parbox{3mm}{\hfill}%
\parbox[t]{15cm}{\(xy(y + t)(x + t)(x + y - z)z(z + t)(A_{0}x + A_{1}y + A_{2}z + A_{3}t)\)}

\noindent\parbox[right]{9mm}{\textbf{377:}}\parbox{3mm}{\hfill}%
\parbox[t]{15cm}{\(xyz(A_{0}x + A_{1}y + A_{1}z)t(A_{0}x + A_{2}y + A_{2}t)(A_{0}x + A_{3}z + A_{3}t)(x + y + z + t)\)}

\noindent\parbox[right]{9mm}{\textbf{378:}}\parbox{3mm}{\hfill}%
\parbox[t]{15cm}{\(x(x + t)y(y + t)z(z + t)(A_{0}x + A_{2}y + A_{3}z)(A_{1}x + A_{2}y + A_{3}z + (A_{2} + A_{3})t)\)}

\noindent\parbox[right]{9mm}{\textbf{379:}}\parbox{3mm}{\hfill}%
\parbox[t]{15cm}{\(x(A_{0}x + A_{2}y + A_{3}z)yzt(A_{1}x + A_{2}y + A_{3}z + A_{2}t)((A_{1} - A_{3})x + (A_{1} - A_{3})y + (A_{2} - A_{3})t)(x + y + z + t)\)}

\noindent\parbox[right]{9mm}{\textbf{380:}}\parbox{3mm}{\hfill}%
\parbox[t]{15cm}{\(xyz(A_{0}x + A_{0}y + A_{1}z)t(A_{2}x + A_{3}y + A_{1}t)(x + y + z + t)(A_{2}x + A_{3}y + (-A_{0} + A_{1})z + (-A_{0} + A_{1})t)\)}

\noindent\parbox[right]{9mm}{\textbf{381:}}\parbox{3mm}{\hfill}%
\parbox[t]{15cm}{\(xyz(A_{0}x + A_{1}y + A_{0}z)t(A_{2}x + (A_{0} - A_{1})y + A_{0}t)(A_{2}x + A_{3}z + A_{0}t)(x + y + z + t)\)}

\noindent\parbox[right]{9mm}{\textbf{382:}}\parbox{3mm}{\hfill}%
\parbox[t]{15cm}{\((A_{0}A_{1}x + A_{0}A_{2}y + A_{1}A_{2}z + A_{2}A_{3}t)(A_{1}x + A_{2}y + A_{1}z + A_{3}t)xyzt(A_{3}x + A_{0}y + A_{0}z + A_{3}t)(x + y + z + t)\)}

\noindent\parbox[right]{9mm}{\textbf{383:}}\parbox{3mm}{\hfill}%
\parbox[t]{15cm}{\(xyz(A_{0}x + A_{1}y + A_{2}z)t(A_{0}x + A_{1}y - A_{2}t)(A_{0}x + A_{0}y + A_{3}z + A_{3}t)(x + y + z + t)\)}

\noindent\parbox[right]{9mm}{\textbf{384:}}\parbox{3mm}{\hfill}%
\parbox[t]{15cm}{\(x(x + t)(A_{0}x + A_{1}y + A_{2}z + A_{3}t)(A_{0}x + A_{1}y + A_{2}z + A_{4}t)y(y + t)z(z + t)\)}

\noindent\parbox[right]{9mm}{\textbf{385:}}\parbox{3mm}{\hfill}%
\parbox[t]{15cm}{\((A_{1}A_{2}x + A_{0}^2y + A_{1}A_{3}z + A_{0}A_{3}t)x(A_{1}A_{2}x + A_{0}^2y + A_{1}A_{2}z + A_{0}A_{2}t)yz(A_{0}^2x + A_{0}^2y + A_{0}A_{3}z + A_{0}A_{3}t)t(x + y + z + t)\)}

\noindent\parbox[right]{9mm}{\textbf{386:}}\parbox{3mm}{\hfill}%
\parbox[t]{15cm}{\(xyz(A_{2}x + A_{0}y + A_{1}z)((A_{2}^2A_{4} - A_{2}A_{3}A_{4})x + (-A_{2}A_{3}A_{4} + A_{2}A_{4}^2)z + (-A_{2}^2A_{3} + A_{2}^2A_{4})t)(A_{2}x + A_{3}y + A_{4}z + A_{2}t)t(x + y + z + t)\)}

\noindent\parbox[right]{9mm}{\textbf{387:}}\parbox{3mm}{\hfill}%
\parbox[t]{15cm}{\(xyz((A_{0} - A_{4})x + (A_{2} - A_{4})y + (A_{2} - A_{4})z)(A_{0}x + A_{2}y + A_{4}t)t(A_{1}x + A_{2}y + A_{3}z + A_{4}t)(x + y + z + t)\)}

\noindent\parbox[right]{9mm}{\textbf{388:}}\parbox{3mm}{\hfill}%
\parbox[t]{15cm}{\(xyz(A_{0}x + A_{1}y + (-A_{0} - A_{1})z)(x + t)(y + t)(z + t)(A_{0}x + A_{2}y + A_{3}z + A_{4}t)\)}

\noindent\parbox[right]{9mm}{\textbf{389:}}\parbox{3mm}{\hfill}%
\parbox[t]{15cm}{\(xy(A_{0}x + A_{1}y + A_{1}z)zt(A_{0}x + A_{2}y + A_{3}t)(x + y + z + t)(A_{0}x + A_{4}y + A_{1}z + A_{1}t)\)}

\noindent\parbox[right]{9mm}{\textbf{390:}}\parbox{3mm}{\hfill}%
\parbox[t]{15cm}{\(xyz(A_{0}x + A_{2}y + A_{3}z)(A_{1}x + A_{3}z + A_{4}t)(A_{4}x + A_{2}y + A_{3}z + A_{4}t)t(x + y + z + t)\)}

\noindent\parbox[right]{9mm}{\textbf{391:}}\parbox{3mm}{\hfill}%
\parbox[t]{15cm}{\(xyz((A_{2} - A_{4})x + A_{0}y + (A_{3} - A_{4})z)(A_{1}x + A_{2}y + A_{4}t)t(A_{2}x + A_{2}y + A_{3}z + A_{4}t)(x + y + z + t)\)}

\noindent\parbox[right]{9mm}{\textbf{392:}}\parbox{3mm}{\hfill}%
\parbox[t]{15cm}{\(xyz((A_{1} - A_{4})x + (A_{2} - A_{4})y + (A_{2} - A_{4})z)(A_{0}x + A_{2}y + A_{4}t)t(A_{1}x + A_{2}y + A_{3}z + A_{4}t)(x + y + z + t)\)}

\noindent\parbox[right]{9mm}{\textbf{393:}}\parbox{3mm}{\hfill}%
\parbox[t]{15cm}{\(xyz(A_{0}x + A_{2}y + A_{3}z)(A_{4}x + A_{3}z + A_{4}t)(A_{1}x + A_{2}y + A_{3}z + A_{4}t)t(x + y + z + t)\)}

\noindent\parbox[right]{9mm}{\textbf{394:}}\parbox{3mm}{\hfill}%
\parbox[t]{15cm}{\(xyz(A_{0}x + A_{1}y + A_{0}z)(A_{0}x + A_{2}y + A_{4}t)t(A_{2}x + A_{2}y + A_{3}z + A_{4}t)(x + y + z + t)\)}

\noindent\parbox[right]{9mm}{\textbf{395:}}\parbox{3mm}{\hfill}%
\parbox[t]{15cm}{\(xyz(A_{1}x + A_{0}y + A_{1}z)(A_{1}^2A_{4}x + (A_{0}A_{1}A_{2} - A_{0}A_{2}A_{4} + A_{1}A_{2}A_{4})y + A_{1}A_{2}A_{4}t)t(A_{1}^2x + (A_{0}A_{1} - A_{0}A_{4} + A_{1}A_{4})y + A_{1}A_{3}z + A_{1}A_{4}t)(x + y + z + t)\)}

\noindent\parbox[right]{9mm}{\textbf{396:}}\parbox{3mm}{\hfill}%
\parbox[t]{15cm}{\(xyz(A_{1}^2A_{3}x + A_{0}A_{1}A_{3}y + (A_{0}A_{1}A_{4} - A_{0}A_{2}A_{4} + A_{2}A_{3}A_{4})z)(A_{1}^2x + A_{2}A_{4}z + A_{1}A_{2}t)(A_{1}x + A_{3}y + A_{4}z + A_{1}t)t(x + y + z + t)\)}

\noindent\parbox[right]{9mm}{\textbf{397:}}\parbox{3mm}{\hfill}%
\parbox[t]{15cm}{\(xyz(A_{0}x + (A_{2} - A_{4})y + (A_{3} - A_{4})z)(A_{4}x + A_{3}z + A_{4}t)(A_{1}x + A_{2}y + A_{3}z + A_{4}t)t(x + y + z + t)\)}

\noindent\parbox[right]{9mm}{\textbf{398:}}\parbox{3mm}{\hfill}%
\parbox[t]{15cm}{\(xyz(A_{0}x + A_{1}y + A_{1}z)(A_{0}x + A_{2}y + A_{4}t)t(A_{4}x + A_{2}y + A_{3}z + A_{4}t)(x + y + z + t)\)}

\noindent\parbox[right]{9mm}{\textbf{399:}}\parbox{3mm}{\hfill}%
\parbox[t]{15cm}{\(xyz(A_{0}x + A_{0}y + A_{1}z)(A_{0}x + A_{3}y + A_{4}t)t(A_{2}x + A_{3}y + A_{4}z + A_{4}t)(x + y + z + t)\)}

\noindent\parbox[right]{9mm}{\textbf{400:}}\parbox{3mm}{\hfill}%
\parbox[t]{15cm}{\(xyz((A_{1} - A_{4})x + (A_{3} - A_{4})y + A_{0}z)(A_{1}x + A_{3}y + A_{4}t)t(A_{2}x + A_{3}y + A_{4}z + A_{4}t)(x + y + z + t)\)}

\noindent\parbox[right]{9mm}{\textbf{401:}}\parbox{3mm}{\hfill}%
\parbox[t]{15cm}{\(xyz(A_{1}x + A_{2}y + A_{0}z)(A_{1}x + A_{2}y + A_{3}t)t(A_{4}x + A_{2}y + A_{3}z + A_{3}t)(x + y + z + t)\)}

\noindent\parbox[right]{9mm}{\textbf{402:}}\parbox{3mm}{\hfill}%
\parbox[t]{15cm}{\(xyz(A_{1}^3x + A_{0}A_{1}^2y + (A_{0}A_{1}^2 - A_{0}A_{2}A_{4} + A_{1}A_{2}A_{4})z)(A_{1}^2x + A_{2}A_{4}z + A_{1}A_{2}t)(A_{1}x + A_{3}y + A_{4}z + A_{1}t)t(x + y + z + t)\)}

\noindent\parbox[right]{9mm}{\textbf{403:}}\parbox{3mm}{\hfill}%
\parbox[t]{15cm}{\(xy((A_{0}A_{2} - A_{0}A_{4})x + A_{2}A_{4}y + A_{2}A_{4}z)zt(A_{0}x + A_{1}y + A_{2}t)(x + y + z + t)(A_{0}x + A_{3}y + A_{4}z + A_{4}t)\)}

\noindent\parbox[right]{9mm}{\textbf{404:}}\parbox{3mm}{\hfill}%
\parbox[t]{15cm}{\(xyz(A_{2}x + A_{0}y + A_{3}z)(A_{2}^2x + A_{1}A_{2}y + A_{1}A_{4}t)t(A_{2}x + A_{2}y + A_{3}z + A_{4}t)(x + y + z + t)\)}

\noindent\parbox[right]{9mm}{\textbf{405:}}\parbox{3mm}{\hfill}%
\parbox[t]{15cm}{\(xyz(A_{0}x + (A_{2} - A_{4})y + (A_{3} - A_{4})z)(A_{1}x + A_{3}z + A_{4}t)(A_{2}y + A_{3}z + A_{4}t)t(x + y + z + t)\)}

\noindent\parbox[right]{9mm}{\textbf{406:}}\parbox{3mm}{\hfill}%
\parbox[t]{15cm}{\(xyz((-A_{0} + A_{2} + A_{3} - A_{4})x + A_{0}y + A_{1}z)(x + t)(y + t)(z + t)(A_{2}x + A_{3}y + A_{1}z + A_{4}t)\)}

\noindent\parbox[right]{9mm}{\textbf{407:}}\parbox{3mm}{\hfill}%
\parbox[t]{15cm}{\(xyz(A_{0}x + A_{1}y + A_{1}z)(A_{0}x + A_{1}y + A_{4}t)t(A_{2}x + A_{1}y + A_{3}z + A_{4}t)(x + y + z + t)\)}

\noindent\parbox[right]{9mm}{\textbf{408:}}\parbox{3mm}{\hfill}%
\parbox[t]{15cm}{\(xyz(A_{1}x + A_{2}y + A_{0}z)(A_{1}x + A_{2}y + A_{3}t)t(x + y + z + t)((A_{1} - A_{2} + A_{4})x + A_{4}y + (-A_{2} + A_{3} + A_{4})z + (-A_{2} + A_{3} + A_{4})t)\)}

\noindent\parbox[right]{9mm}{\textbf{409:}}\parbox{3mm}{\hfill}%
\parbox[t]{15cm}{\(xyz((A_{1} - A_{4})x + (A_{1} - A_{4})y + (A_{3} - A_{4})z)(A_{0}x + A_{2}y + A_{4}t)t(A_{1}x + A_{2}y + A_{3}z + A_{4}t)(x + y + z + t)\)}

\noindent\parbox[right]{9mm}{\textbf{410:}}\parbox{3mm}{\hfill}%
\parbox[t]{15cm}{\(xyz(A_{0}x + A_{2}y + A_{3}z)(A_{1}x + A_{3}z + A_{4}t)(A_{2}y + A_{3}z + A_{4}t)t(x + y + z + t)\)}

\noindent\parbox[right]{9mm}{\textbf{411:}}\parbox{3mm}{\hfill}%
\parbox[t]{15cm}{\(xyz(A_{0}x + A_{1}y + A_{1}z)(A_{0}x + A_{2}y + A_{4}t)t(A_{2}x + A_{2}y + A_{3}z + A_{4}t)(x + y + z + t)\)}

\noindent\parbox[right]{9mm}{\textbf{412:}}\parbox{3mm}{\hfill}%
\parbox[t]{15cm}{\(xyz((A_{1} - A_{3})x + (A_{2} - A_{3})y + A_{0}z)(A_{1}x + A_{2}y + A_{3}t)t(x + y + z + t)(A_{1}x + A_{2}y + A_{4}z + A_{4}t)\)}

\noindent\parbox[right]{9mm}{\textbf{413:}}\parbox{3mm}{\hfill}%
\parbox[t]{15cm}{\(xy(A_{0}x + A_{1}y + A_{1}z)zt(A_{0}x + A_{2}y + A_{3}t)(x + y + z + t)(A_{4}x + (A_{2} + A_{3})y + A_{3}z + A_{3}t)\)}

\noindent\parbox[right]{9mm}{\textbf{414:}}\parbox{3mm}{\hfill}%
\parbox[t]{15cm}{\(xyz(A_{2}x + A_{0}y + A_{1}z)(x + t)(y + t)(z + t)(A_{2}x + A_{3}y + A_{4}z + (A_{2} + A_{3} + A_{4})t)\)}

\noindent\parbox[right]{9mm}{\textbf{415:}}\parbox{3mm}{\hfill}%
\parbox[t]{15cm}{\(xyz(A_{0}x + (A_{0} - A_{1} + A_{2})y + (A_{0} - A_{1} + A_{3})z)(A_{4}x + A_{3}z + A_{4}t)(A_{1}x + A_{2}y + A_{3}z + A_{4}t)t(x + y + z + t)\)}

\noindent\parbox[right]{9mm}{\textbf{416:}}\parbox{3mm}{\hfill}%
\parbox[t]{15cm}{\(xyz((A_{1} - A_{3})x + (A_{2} - A_{3})y + A_{0}z)(A_{1}x + A_{2}y + A_{3}t)t(x + y + z + t)((A_{1} - A_{3})x + (A_{2} - A_{3})y + A_{4}z + A_{4}t)\)}

\noindent\parbox[right]{9mm}{\textbf{417:}}\parbox{3mm}{\hfill}%
\parbox[t]{15cm}{\(xyz((A_{2} - A_{3})x + A_{0}y + (-A_{3} + A_{4})z)(A_{2}x + A_{2}y + A_{1}t)t(x + y + z + t)(A_{2}x + A_{3}y + A_{4}z + A_{3}t)\)}

\noindent\parbox[right]{9mm}{\textbf{418:}}\parbox{3mm}{\hfill}%
\parbox[t]{15cm}{\(xyz((A_{2} - A_{4})x + A_{0}y + (A_{3} - A_{4})z)((A_{2} - A_{4})x + A_{0}y + A_{1}t)t(x + y + z + t)(A_{2}x + A_{4}y + A_{3}z + A_{4}t)\)}

\noindent\parbox[right]{9mm}{\textbf{419:}}\parbox{3mm}{\hfill}%
\parbox[t]{15cm}{\(xyz(A_{0}x + A_{1}y + A_{1}z)(A_{0}x + A_{2}y + A_{3}t)t(A_{4}x + A_{2}y + A_{3}z + A_{3}t)(x + y + z + t)\)}

\noindent\parbox[right]{9mm}{\textbf{420:}}\parbox{3mm}{\hfill}%
\parbox[t]{15cm}{\(xyz(A_{0}x + A_{1}y + A_{0}z)(A_{0}x + A_{3}y + A_{4}t)t(A_{2}x + A_{3}y + A_{4}z + A_{4}t)(x + y + z + t)\)}

\noindent\parbox[right]{9mm}{\textbf{421:}}\parbox{3mm}{\hfill}%
\parbox[t]{15cm}{\(xyz(A_{0}x + A_{3}y + A_{4}z)(A_{1}x + A_{3}y + A_{5}t)t(A_{2}x + A_{3}y + A_{4}z + A_{5}t)(x + y + z + t)\)}

\noindent\parbox[right]{9mm}{\textbf{422:}}\parbox{3mm}{\hfill}%
\parbox[t]{15cm}{\(xyz(A_{0}x + A_{1}y + A_{2}z)(A_{0}x + A_{3}z + A_{4}t)(A_{4}x + A_{5}y + A_{3}z + A_{4}t)t(x + y + z + t)\)}

\noindent\parbox[right]{9mm}{\textbf{423:}}\parbox{3mm}{\hfill}%
\parbox[t]{15cm}{\(xyz(A_{0}x + A_{1}y + A_{2}z)(A_{0}x + A_{3}z + A_{0}t)(A_{4}x + A_{5}y + A_{3}z + A_{0}t)t(x + y + z + t)\)}

\noindent\parbox[right]{9mm}{\textbf{424:}}\parbox{3mm}{\hfill}%
\parbox[t]{15cm}{\(xyz(A_{0}x + A_{1}y + A_{1}z)(A_{0}x + A_{2}y + A_{3}t)t(A_{4}x + A_{2}y + A_{5}z + A_{3}t)(x + y + z + t)\)}

\noindent\parbox[right]{9mm}{\textbf{425:}}\parbox{3mm}{\hfill}%
\parbox[t]{15cm}{\(xyz(A_{0}x + A_{1}y + A_{0}z)(A_{0}x + A_{2}y + A_{3}t)t(A_{4}x + A_{2}y + A_{5}z + A_{3}t)(x + y + z + t)\)}

\noindent\parbox[right]{9mm}{\textbf{426:}}\parbox{3mm}{\hfill}%
\parbox[t]{15cm}{\(xyz(A_{0}x + A_{1}y + A_{2}z)(A_{3}x + A_{4}y + A_{5}t)t((A_{0} + A_{5})x + A_{4}y + (A_{2} + A_{5})z + A_{5}t)(x + y + z + t)\)}

\noindent\parbox[right]{9mm}{\textbf{427:}}\parbox{3mm}{\hfill}%
\parbox[t]{15cm}{\(xyz(A_{0}x + A_{1}y + A_{2}z)(A_{0}x + A_{3}y + A_{4}t)t(A_{5}x + A_{3}y + A_{4}z + A_{4}t)(x + y + z + t)\)}

\noindent\parbox[right]{9mm}{\textbf{428:}}\parbox{3mm}{\hfill}%
\parbox[t]{15cm}{\(xyz(A_{0}x + A_{1}y + A_{2}z)(A_{0}x + A_{3}y + A_{4}t)t(x + y + z + t)((-A_{0} + A_{5})x + (-A_{4} + A_{5})y + A_{5}z + (-A_{4} + A_{5})t)\)}

\noindent\parbox[right]{9mm}{\textbf{429:}}\parbox{3mm}{\hfill}%
\parbox[t]{15cm}{\(xyz(A_{3}x + A_{0}y + A_{1}z)((-A_{3} + A_{4})x + (A_{4} - A_{5})y + A_{2}t)t(x + y + z + t)(A_{3}x + A_{5}y + A_{4}z + A_{5}t)\)}

\noindent\parbox[right]{9mm}{\textbf{430:}}\parbox{3mm}{\hfill}%
\parbox[t]{15cm}{\(xyz(A_{0}x + A_{1}y + A_{2}z)((A_{0} - A_{2})x + (A_{1} - A_{2})y + A_{3}t)t(x + y + z + t)(A_{0}x + A_{4}y + A_{5}z + A_{4}t)\)}

\noindent\parbox[right]{9mm}{\textbf{431:}}\parbox{3mm}{\hfill}%
\parbox[t]{15cm}{\(xyz((A_{0} + A_{2} - A_{4})x + (A_{0} + A_{3} - A_{4})y + A_{0}z)(A_{1}x + A_{3}y + A_{5}t)t(A_{2}x + A_{3}y + A_{4}z + A_{5}t)(x + y + z + t)\)}

\noindent\parbox[right]{9mm}{\textbf{432:}}\parbox{3mm}{\hfill}%
\parbox[t]{15cm}{\(xyz(A_{2}x + A_{0}y + A_{4}z)(A_{1}x + A_{3}y + A_{5}t)t(A_{2}x + A_{3}y + A_{4}z + A_{5}t)(x + y + z + t)\)}

\noindent\parbox[right]{9mm}{\textbf{433:}}\parbox{3mm}{\hfill}%
\parbox[t]{15cm}{\(xyz(A_{0}x + A_{1}y + A_{2}z)(A_{0}x + A_{3}z + A_{4}t)(A_{5}y + A_{3}z + A_{4}t)t(x + y + z + t)\)}

\noindent\parbox[right]{9mm}{\textbf{434:}}\parbox{3mm}{\hfill}%
\parbox[t]{15cm}{\(xyz(A_{0}x + A_{1}y + A_{2}z)((A_{0} + A_{3})x + (A_{1} + A_{3})y + A_{3}t)t(x + y + z + t)(A_{0}x + A_{4}y + A_{5}z + A_{5}t)\)}

\noindent\parbox[right]{9mm}{\textbf{435:}}\parbox{3mm}{\hfill}%
\parbox[t]{15cm}{\(xyz((A_{3} - A_{5})x + A_{0}y + (A_{4} - A_{5})z)(A_{3}x + A_{1}y + A_{2}t)t(x + y + z + t)(A_{3}x + A_{5}y + A_{4}z + A_{5}t)\)}

\noindent\parbox[right]{9mm}{\textbf{436:}}\parbox{3mm}{\hfill}%
\parbox[t]{15cm}{\(xyz((A_{0} + A_{2} - A_{4})x + (A_{0} + A_{3} - A_{4})y + A_{0}z)((A_{1} + A_{2} - A_{5})x + (A_{1} + A_{3} - A_{5})y + A_{1}t)t(x + y + z + t)(A_{2}x + A_{3}y + A_{4}z + A_{5}t)\)}

\noindent\parbox[right]{9mm}{\textbf{437:}}\parbox{3mm}{\hfill}%
\parbox[t]{15cm}{\(xyz(A_{0}x + A_{1}y + A_{2}z)(A_{0}x + A_{1}y + A_{3}t)t(x + y + z + t)(A_{0}x + A_{4}y + A_{5}z + A_{5}t)\)}

\noindent\parbox[right]{9mm}{\textbf{438:}}\parbox{3mm}{\hfill}%
\parbox[t]{15cm}{\(xyz((A_{2} - A_{5})x + (A_{3} - A_{5})y + A_{0}z)((A_{2} - A_{4})x + (A_{3} - A_{4})y + A_{1}t)t(x + y + z + t)(A_{2}x + A_{3}y + A_{4}z + A_{5}t)\)}

\noindent\parbox[right]{9mm}{\textbf{439:}}\parbox{3mm}{\hfill}%
\parbox[t]{15cm}{\(xyz(A_{0}x + A_{1}y + A_{2}z)(A_{0}x + A_{3}y + A_{4}t)t(A_{5}x + A_{3}y + A_{6}z + A_{4}t)(x + y + z + t)\)}

\noindent\parbox[right]{9mm}{\textbf{440:}}\parbox{3mm}{\hfill}%
\parbox[t]{15cm}{\(xyz(A_{0}x + A_{1}y + A_{2}z)(A_{0}x + A_{3}y + A_{4}t)t(x + y + z + t)((A_{0} + A_{6})x + (A_{1} + A_{6})y + A_{5}z + A_{6}t)\)}

\noindent\parbox[right]{9mm}{\textbf{441:}}\parbox{3mm}{\hfill}%
\parbox[t]{15cm}{\(xyz(A_{0}x + A_{1}y + A_{2}z)((A_{0} + A_{3})x + (A_{1} + A_{3})y + A_{3}t)t(x + y + z + t)(A_{0}x + A_{4}y + A_{5}z + A_{6}t)\)}

\noindent\parbox[right]{9mm}{\textbf{442:}}\parbox{3mm}{\hfill}%
\parbox[t]{15cm}{\(xyz(A_{0}x + A_{1}y + A_{2}z)(A_{0}x + A_{3}y + A_{4}t)t(x + y + z + t)(A_{0}x + A_{5}y + A_{6}z + A_{5}t)\)}

\noindent\parbox[right]{9mm}{\textbf{443:}}\parbox{3mm}{\hfill}%
\parbox[t]{15cm}{\(xyz(A_{2}x + A_{0}y + A_{1}z)(A_{2}x + A_{3}y + A_{4}t)t(x + y + z + t)((A_{2} - A_{4} + A_{6})x + (A_{3} - A_{4} + A_{6})y + A_{5}z + A_{6}t)\)}

\noindent\parbox[right]{9mm}{\textbf{444:}}\parbox{3mm}{\hfill}%
\parbox[t]{15cm}{\(xyz(A_{0}x + A_{1}y + A_{2}z)(A_{0}x + A_{1}y + A_{3}t)t(x + y + z + t)(A_{0}x + A_{4}y + A_{5}z + A_{6}t)\)}

\noindent\parbox[right]{9mm}{\textbf{445:}}\parbox{3mm}{\hfill}%
\parbox[t]{15cm}{\(xyz(A_{0}x + A_{1}y + A_{2}z)(A_{0}x + A_{3}y + A_{4}t)t(x + y + z + t)(A_{0}x + A_{5}y + A_{6}z + A_{6}t)\)}

\noindent\parbox[right]{9mm}{\textbf{446:}}\parbox{3mm}{\hfill}%
\parbox[t]{15cm}{\(xyz(A_{0}x + A_{1}y + A_{2}z)(A_{0}x + A_{3}y + A_{4}t)t(x + y + z + t)(A_{0}x + A_{5}y + A_{6}z + A_{7}t)\)}

\noindent\parbox[right]{9mm}{\textbf{447:}}\parbox{3mm}{\hfill}%
\parbox[t]{15cm}{\(xyz(A_{0}x + A_{1}y + A_{2}z)t(A_{0}x + A_{3}y + A_{4}z + A_{5}t)(A_{6}x + A_{3}y + A_{4}z + A_{7}t)(x + y + z + t)\)}

\noindent\parbox[right]{9mm}{\textbf{448:}}\parbox{3mm}{\hfill}%
\parbox[t]{15cm}{\(xyz(A_{2}x + A_{0}y + A_{1}z)t(A_{2}x + A_{3}y + A_{4}z + A_{5}t)((A_{2} - A_{3} + A_{6})x + A_{6}y + (-A_{3} + A_{4} + A_{6})z + A_{7}t)(x + y + z + t)\)}

\noindent\parbox[right]{9mm}{\textbf{449:}}\parbox{3mm}{\hfill}%
\parbox[t]{15cm}{\(xyz(A_{0}x + A_{1}y + A_{2}z)t(A_{0}x + A_{3}y + A_{4}z + A_{5}t)(A_{0}x + A_{6}y + A_{7}z + A_{8}t)(x + y + z + t)\)}

\noindent\parbox[right]{9mm}{\textbf{450:}}\parbox{3mm}{\hfill}%
\parbox[t]{15cm}{\(xyzt(x + y + z + t)(A_{0}x + A_{1}y + A_{2}z + A_{3}t)(A_{0}x + A_{4}y + A_{5}z + A_{6}t)(A_{0}x + A_{7}y + A_{8}z + A_{9}t)\)}

\noindent\parbox[right]{9mm}{\textbf{451:}}\parbox{3mm}{\hfill}%
\parbox[t]{15cm}{\((x + \frac{\sqrt{-3} - 1}2y)xy(y + t)(x + t)(-x + -\frac{\sqrt{-3} + 1}2y + \frac{\sqrt{-3} - 1)}2z - t)z(z + t)\)}

\noindent\parbox[right]{9mm}{\textbf{452:}}\parbox{3mm}{\hfill}%
\parbox[t]{15cm}{\(xy(\frac{\sqrt{-3} + 3}2x + y + \frac{\sqrt{-3} + 3}2z)zt(\frac32(\sqrt{-3} + 1)x + \frac{\sqrt{-3} + 3}2y + 3t)(y + \frac{\sqrt{-3} + 3}2z + \frac{\sqrt{-3} + 3}2t)(x + y + z + t)\)}

\noindent\parbox[right]{9mm}{\textbf{453:}}\parbox{3mm}{\hfill}%
\parbox[t]{15cm}{\(xyz(x + y + \frac{\sqrt{5} + 1}2z)(\frac{\sqrt{5} + 3}2x + \frac{3\sqrt{5} + 7}2y + \frac{3\sqrt{5} + 7}2t)t(x + y + z + t)(\frac{\sqrt{5} + 3}2x + \frac{3\sqrt{5} + 7}2z + \frac{3\sqrt{5} + 7}2t)\)}

\noindent\parbox[right]{9mm}{\textbf{454:}}\parbox{3mm}{\hfill}%
\parbox[t]{15cm}{\(x(A_{0}x + A_{1}y + A_{1}z)yzt(x + y + z + t)(A_{0}x + A_{0}y + \frac{1}{2}(\sqrt{-3} + 1)A_{0}t)(A_{0}y + \frac{1}{2}(\sqrt{-3} + 1)A_{0}z + A_{0}t)\)}

\noindent\parbox[right]{9mm}{\textbf{455:}}\parbox{3mm}{\hfill}%
\parbox[t]{15cm}{\(xy(A_{1}x + A_{1}y + (A_{1} + A_{2})z)zt(A_{0}A_{2}x + (A_{1}A_{2} + A_{2}^2)y + (A_{0}A_{1} + A_{0}A_{2})t)(A_{0}A_{1}x + A_{1}A_{2}y + (A_{1}A_{2} + A_{2}^2)z + A_{0}A_{1}t)(x + y + z + t)\)}

\subsection{Magma code}\leavevmode

We use the following magma code to compute detailed information on an octic arrangement and the associated double octic.
\begin{itemize}\def\itemsep{2mm}\leftskip=-8mm\setlength\labelsep{15mm}
	\setlength  \itemindent{-5mm}
	\item[] \texttt{IncidenceTable} computes the minimal incidence table of an octic arrangement, defined as a sequence of eight linear polynomials in four variables, an incidence table is presented as a sequence of quadruples of eight digits $\{1,\dots,8\}$,
	\item[] \texttt{InvariantPermutaions} computes all permutations of the set $\{1,\dots,8\}$ which preserve a given incidence table up to permutation (as explained in section~\ref{sec:projtransf}),
	\item[] \texttt{MinimalIncidenceTable} computes minimal incidence table for a given incidence table,
	\item[] \texttt{Singularities} computes singularities of an octic arrangement defining the given incidence table, returns lists of $p_3, p_4^0, p_4^1, p_5^0, p_5^1$ and $p_5^2$ and  $l_2, l_3$ lines (as lists of planes),
	\item[] \texttt{ArrInvariants} return numbers of $p_3, p_4^0, p_4^1, p_5^0, p_5^1, p_5^2$ points and $l_{3}$ lines of an octic arrangement, Hodge numbers $h^{1,2}$, $h^{1,1}$ and the Euler characteristic $\chi$ of the corresponding double octic.
\end{itemize}

\lstset{language=magma,breaklines=false,breakatwhitespace=true,basicstyle=\tiny, comment=[s]{/*}{*/},commentstyle=\tiny\emph, showstringspaces=false, keepspaces=true, breaklines=true,  tabsize=2 }

\begin{lstlisting}
function RecursiveSort(II) local result;
	result:=[];
	for i:=1 to #II do
		result[i]:=Sort(II[i]);
	end for;
	return Sort(result);
end function;

/*
Computes Incidence Table for an octic arrangement
*/
function IncidenceTable(OcticArr);
	IT:=[];
	Mat:=JacobianMatrix(OcticArr);
	for Q in Subsets({1,2,3,4,5,6,7,8},4) do
		if Minor(Mat,SetToSequence(Q),[1,2,3,4]) eq 0 then
			Include(~IT,SetToSequence(Q));
		end if;
	end for;
	return RecursiveSort(IT);
end function;

/*
Computes  invariant permutations for an incidence table
*/
function InvariantPermutations(IT) local symmetries;
	symmetries:=[];
	for x in Sym(8) do
		NewIT:=RecursiveSort(IT^x);
		if NewIT eq IT then
			Include(~symmetries,x);
		end if;
	end for;
	return symmetries;
end function;

/*
Computes Minimal incidence table for an Incidence Table
*/
function MinimalIncidenceTable(IT) local ITMin;
	ITMin:=IT;
	for x in Sym(8) do
		NewIT:=RecursiveSort(IT^x);
		if NewIT lt ITMin then
			ITMin:=NewIT;
		end if;
	end for;
	return ITMin;
end function;

/*
Computes singularities of an octic arrangement
returns: p_3, p_4^0, p_4^1, p_5^0, p_5^1, p_5^2, l_2, l_3
*/
function Singularities(IT);
	fivefold_points:=[SetToSequence(P): P in Subsets({1..8},5)| forall{Q: Q in Subsets(P,4)| \
		SetToSequence(Q) in IT }];
	fourfold_points:=[P: P in IT | forall {Q: Q in fivefold_points| SequenceToSet(P) \
		notsubset SequenceToSet(Q)}];
	triple_points:=[SetToSequence(P): P in Subsets({1..8},3)| forall{Q:Q in IT |P \
		notsubset SequenceToSet(Q)}];
	triple_lines:=[SetToSequence(P): P in Subsets({1..8},3)| forall{Q: Q in \
		[QQ: QQ in Subsets({1..8},4)| P subset QQ]| SetToSequence(Q) in IT }];
	double_lines:=[SetToSequence(i) : i in Subsets({1..8},2)| SetToSequence(i) notin \
		&cat([[[ii[1],ii[2]],[ii[1],ii[3]],[ii[2],ii[3]]]:ii in triple_lines])];
	fourfold_points_zero:=[];
	fourfold_points_one:=[];
	fivefold_points_zero:=[];
	fivefold_points_one:=[];
	fivefold_points_two:=[];
	for P in fivefold_points do
		nn:=#{Q: Q in triple_lines| IsSubsequence(Q,P:Kind:="Setwise")};
		if nn eq 2 then
			Include(~fivefold_points_two,P);
		elif nn eq 1 then
			Include(~fivefold_points_one,P);
		else
			Include(~fivefold_points_zero,P);
		end if;
	end for;
	Sort(~fivefold_points_zero);
	Sort(~fivefold_points_one);
	Sort(~fivefold_points_two);
	Sort(~fourfold_points_zero);
	Sort(~fourfold_points_one);
	Sort(~triple_lines);
	Sort(~triple_points);
	Sort(~double_lines);
	for P in fourfold_points do
		nn:=#{Q: Q in triple_lines| IsSubsequence(Q,P:Kind:="Setwise")};
		if nn eq 1 then
			Include(~fourfold_points_one,P);
		else
			Include(~fourfold_points_zero,P);
		end if;
	end for;
	return [triple_points,fourfold_points_zero,fourfold_points_one,fivefold_points_zero, \
		fivefold_points_one, fivefold_points_two, double_lines, triple_lines];
end function;

/*
Computes topological invariants of adouble octic
returns: #p_3, #p_4^0, #p_4^1, #p_5^0, #p_5^1, #p_5^2, #l_3, h^1,2, h^1,2, \xi
*/
function ArrInvariants(OcticArr) local OcticEq, JF, Ieq;
	SingList:=Singularities(IncidenceTable(OcticArr));
	OcticEq:=&*OcticArr;
	JF:=JacobianIdeal(OcticEq);
	Ieq:=ideal<Parent(OcticEq)|1>;
	for i in &cat(SingList) do
		Ieq:=Ieq meet (JF+Ideal([OcticArr[k]:k in i])^(#i));
	end for;
	deform:=Coefficients(HilbertSeries(JF,9))[9]-Coefficients(HilbertSeries(Ieq,9))[9];
	euler_characteristic:=40+4*#SingList[2]+3*#SingList[3]+16*#SingList[4]+ \ 18*#SingList[5]+20*#SingList[6]+#SingList[8];
	picard_number:=euler_characteristic/2+deform;
	return([#SingList[i]: i in [1,2,3,4,5,6,8]] cat [deform,picard_number, \
		euler_characteristic]);
end function;
\end{lstlisting}

\section*{Acknowledgment}
We are indebted to Christian Meyer sharing with us a list of equations of octic arrangement used to compile the list in \cite{Meyer}. Thanks to Matthias Sch\"utt and Duco van Straten for helpful discussions and explanations.
The first named author is grateful to Max Planck Institute for Mathematics in Bonn for its hospitality and financial support.
This research was partially supported by the National Science Center
grant no. 2020/39/B/ST1/03358 and  by PL--Grid Infrastructure.

\end{document}